% Smooth.tex  :  23  / 07  / 2015, follow-up of WeilSpaces.tex : see letter of Anders Kock from june 27 for comments !
% pre-history  : RobyDiff.tex from 28 sept 2012 became WeilSpaces.tex put on arxiv on feb 11, 2014.
%

\documentclass[12pt]{amsart}

% packages 
\usepackage{amsmath}
\usepackage{amssymb}
\usepackage{array}
\usepackage{enumitem}
\usepackage{hyperref}
\usepackage{verbatim} % has useful "comment" environment
\usepackage{color}
%\usepackage{mathabx}
%\usepackage{MnSymbol} % has double sesearrows , but unfortunately creates other problems... 

% page formatting
\addtolength{\textheight}{2cm}
\addtolength{\hoffset}{-1cm}
\addtolength{\textwidth}{2.25cm}
\addtolength{\voffset}{-1.5cm}

% theoremstyles
%  plain style
\theoremstyle{plain}
\newtheorem{theorem}{Theorem}[section]
\newtheorem{lemma}[theorem]{Lemma}

\newtheorem{definition}[theorem]{Definition}
%  remark style
\theoremstyle{remark}
\newtheorem{remark}{Remark}[section]
\newtheorem{example}{Example}[section]

\newtheorem*{acknowledgment}{Acknowledgment}

% equation numbering
\numberwithin{equation}{section}

% math symbol shortcuts

% blackboard
\newcommand{\bA}{\mathbb{A}}

\newcommand{\bB}{\mathbb{B}}

\newcommand{\K}{\mathbb{K}}
\newcommand{\bK}{\mathbb{K}}

\newcommand{\R}{\mathbb{R}}
\newcommand{\Z}{\mathbb{Z}}
\newcommand{\C}{\mathbb{C}}

% calligraphic

\newcommand{\cT}{\mathcal{T}}
\newcommand{\cS}{\mathcal{S}}

% fraktur

% bold

\newcommand{\eff}{\mathbf{f}}
\newcommand{\bff}{\mathbf{f}}
\newcommand{\bfg}{\mathbf{g}}

\newcommand{\bfU}{\mathbf{U}}
\newcommand{\bfV}{\mathbf{V}}
\newcommand{\bfW}{\mathbf{W}}

% additional commands

\newcommand{\Hom}{\mathrm{Hom}}

\newcommand{\id}{\mathrm{id}}

\newcommand{\pr}{\mathrm{pr}}

\newcommand{\set}{\mathrm{Set}}
\newcommand{\Alg}{\mathrm{Alg}}

% --- greek symbols ---

% useful stuff

\newcommand{\sett}[1]{{\{ #1 \}}}
\newcommand{\settt}[1]{{\overline{\{ #1 \}}}}

\newcommand{\inv}{^{-1}}

\newcommand{\ssk}{\smallskip}
\newcommand{\nin}{\noindent}

\newcommand{\ul}{\underline}
%http://www.latex-community.org/forum/viewtopic.php?f=46&p=43683

%
\begin{document}

\title[Conceptual Differential Calculus. I]{Conceptual Differential Calculus
\\
Part I: first order local linear algebra}
%Part I: First order double categories}

\author{Wolfgang Bertram}

\address{Institut \'{E}lie Cartan de Lorraine \\
Universit\'{e} de Lorraine at Nancy, CNRS, INRIA \\
Boulevard des Aiguillettes, B.P. 239 \\
F-54506 Vand\oe{}uvre-l\`{e}s-Nancy, France}

\email{\url{wolfgang.bertram@univ-lorraine.fr}}

\subjclass[2010]{
14A20  	%Algebraic Geometry : Generalizations (algebraic spaces, stacks)
18F15  	%Abstract manifolds and fiber bundles [See also 55Rxx, 57Pxx]
51K10  	%Synthetic differential geometry (in : distance geometry ...)
%58A03  	%Topos-theoretic approach to differentiable manifolds
%58A32  	%Natural bundles
20L05  	%Groupoids (i.e. small categories in which all morphisms are isomorphisms) {For sets with a single binary operation, see 20N02; for topological groupoids, see 22A22, 58H05}
18A05  	% Category Theory: Definitions, generalizations [no double categories in the MSC !]
}

\keywords{differential calculus, difference factorizer,
groupoid, tangent groupoid, double groupoid, double category,
synthetic differential geometry, local linear algebra}

\begin{abstract}
We give a rigorous formulation of the intuitive idea that a differentiable map should be the
same thing as a {\em locally}, or {\em  infinitesimally, linear} map: just as 
a linear map respects the operations of addition and multiplication by scalars in
a vector space or module,  a {\em  locally linear map}  is defined to be a map respecting two canonical operations
$\ast$ and $\bullet$ living ``over'' its domain of definition.
These two operations are composition laws of a {\em canonical groupoid} and of a {\em scaled action category}, respectively,
fitting together into a {\em canonical double category}. 
Local linear algebra (of first order) is the study of such double categories and of their morphisms; it is 
a purely algebraic and conceptual (i.e., categorical and chart-independent) version of first order differential calculus. 
In subsequent work, the higher order theory (using higher multiple categories) will be investigated. 
\end{abstract}

\maketitle

\setcounter{tocdepth}{1}

\section*{Introduction}

The present work continues the line of investigations on general differential calculus and general
differential geometry started with \cite{BGN04, Be08, Be13, BeS, Be14}. Combining it with ideas present
in work of Nel \cite{Nel} and in synthetic differential geometry (see \cite{Ko10, MR91}), we obtain 
a purely algebraic and categorical presentation of the formal rules underlying differential calculus.
The results can be read  on two different levels:

\begin{itemize}
\item
even for ordinary (finite-dimensional, real or complex) manifolds $M$,
the construction of a canonical {\em first order difference groupoid} $M^\sett{1}$ and of a
{\em first order double category} $M^\settt{1}$ seem to be new -- indeed, they contain as a special
case a new and more conceptual construction of Connes' {\em tangent groupoid} (\cite{Co94}, II.5),
and hence our theory may be of some interest in non-commutative geometry and quantization (see, e.g., 
\cite{La}), 
\item
and these constructions open the way for  a  general, ``conceptual'',  approach to calculus and
manifolds over any commutative base ring. 
\end{itemize}

\nin
%\cite{Be08b} can serve as  introduction to this paper,
% see also  \cite{Be08}, Remark 6.10, and \cite{Be12}, Section XIV.5.2.
The term ``conceptual differential calculus'' is an allusion to the title of the book \cite{LaSch}, in the sense
that ``conceptual'' means ``categorical''. Most of the concepts we are going to use
(in particular, {\em double categories} and {\em double
groupoids})   go back to work of Charles Ehresmann,
see \cite{E65}; but, while Ehresmann applied them to the output of differential calculus
(i.e., to differential geometry), I shall advocate here to apply them already on the level of  the input (i.e., to the calculus
itself).

\subsection{Topological differential calculus}
 ``Usual'' differential calculus is not 
intrinsic, in the sense that it takes place in a chart domain, and not directly on a manifold:
the usual difference quotient, for a function $f:U \to W$, defined on a subset $U$ of a vector space $V$,
at a point $x \in U$
in direction $v$ and with $t\not=0$,
\begin{equation}\label{eqn:DQ}
F(x,v,t):= \frac{f(x+tv) - f(x)}{t}
\end{equation}
depends on the vector space structure, hence on a chart, and
it cannot directly be defined on a manifold.
Thus,  one  first has to develop ``calculus in vector spaces'', from which one then extracts ``invariant''
information, in order to define manifolds and structures living on them;
our work \cite{BGN04, Be08} is no exception to this rule --  in Section \ref{sec:Diff} of the present work,
we recall that approach, which we call  {\em topological differential calculus} (cf.\ \cite{Be11}).

\subsection{Conceptual differential calculus: the groupoid approach}
The path from topological calculus to the conceptual version to be presented here has been quite long,
and I refer the reader wishing to have more ample motivation and heuristic explanations
to my books and papers given in the reference list (in particular, the attempts to 
solve the problems listed in \cite{Be08b} have played an important r\^ole).
Also, it would take too much room to mention here all the work that influenced this approach, foremost 
synthetic differential geometry (see, e.g., \cite{Ko10, MR91}). 

\ssk
In a nutshell, our categorical approach can be summarized as follows: 
intuitively, we think of a manifold, or of a more general ``smooth space'', as a space $M$ that is locally, or
infinitesimally, linear. 
Seen algebraically,

\begin{itemize}
\item
 a linear space, $V$,  
is defined by two laws, $+$ and $\cdot \, $, living
\ul{\em in} the space $V$, meaning that these laws are {\em everywhere defined} on $V \times V$, resp.\
on $\K \times V$ (here, $\K$ is the base field or ring), 
\item
saying that $M$ is locally linear amounts to saying that $M$ is defined by two laws,
$\ast$ and $\bullet$, living \ul{\em over} the space $M$, in the sense that they are {\em not everywhere defined} and
 live in a certain {\em bundle,
$M^\settt{1}$, over $M \times \K$.}
\end{itemize}

\nin
More precisely, $\ast$ is a {\em groupoid law} and
$\bullet$ a {\em category law}; and the compatibility of $+$ and $\cdot$ in $V$ generalizes to the
compatibility of $\ast$ and $\bullet$, meaning that the whole structure forms a 
{\em (small) double category}.\footnote{I do not assume the reader to
have any knowledge in category theory since I am myself not a specialist in this domain -- full
definitions are given in the appendices \ref{App:Cat}, \ref{App:ActionCat}, \ref{App:Doublecat}.
I apologize in advance to  experts in category theory
for a possibly somewhat ideosyncratic presentation.}
In fact, the law $\ast$ generalizes Connes' tangent groupoid (\cite{Co94}, II.5):
as in Connes' construction, for each $t \in \K$, the fiber in $M^\settt{1}$ over $M \times \{ t \}$ is
still a groupoid; for $t=0$ we get the usual tangent bundle $TM$ (with its usual vector bundle structure),
and for invertible $t$, we get a copy of the {\em pair groupoid over $M$}.
Our construction is natural and does not proceed by taking (as in \cite{Co94}) a disjoint union of groupoids: if $M$ is a
Hausdorff manifold,  it is obvious from our construction that we get
 an {\em interpolation between the pair groupoid and the tangent bundle of $M$} (Theorem \ref{th:Manifold-one}) .

\ssk
Starting with the difference quotient
(\ref{eqn:DQ}), it is indeed quite easy to explain how to arrive at these concepts -- see Sections \ref{sec:First} and 
\ref{sec:Scalar}:
multiplying by $t$ in (\ref{eqn:DQ}), we get the notion of {\em difference factorizer} (terminology following
Nel, \cite{Nel}). Analyzing  further this concept, we realize that ``addititve'' difference factorizers are in
1:1-correspondence with morphisms of a certain $\ast$-groupoid;
if the difference factorizer is, moreover, ``homogeneous'', it corresponds to a morphism
of a certain $(\ast,\bullet)$-double category. Summing up, the formal concept corresponding  to usual 
mappings of class $C^1$ is precisely the one of morphisms of $(\ast,\bullet)$-double categories:
{\em maps of class $C^1$ are thus the same as infinitesimally linear maps.} 
This comes very close to the point of view of synthetic differential geometry (achieved there by
topos-theoretic methods). 

\subsection{Laws of class $C^1_\K$}
Once these observations are made, we can generalize {\em $C^1$-maps} by {\em $C^1$-laws over
an arbitrary  base ring $\K$}: 
given a set-map $f:U \to W$, a {\em $C^1_\K$-law over $f$} is a morphism of double categories
$f^\settt{1}:U^\settt{1} \to W^\settt{1}$  (Section  \ref{sec:Law}).
Although $f^\settt{1}$ need not be uniquely determined by $f$ (just like a polynomial need not be determined
by its underlying polynomial map), we think of $f^\settt{1}$ as a sort of {\em derivative} of $f$. Indeed,
a differentiable map, in the usual sense, gives rise  to a (unique) continuous law.
We prove also that every polynomial law (in the general sense of \cite{Ro63})  is a law of class $C^1_\K$
(Theorem \ref{th:polylaw}).
Using general principles on the construction of
manifolds (Appendix \ref{App:A}), we explain in Section \ref{sec:mflaws} how these concepts
 carry over to {\em general $C^1$-manifold laws over $\K$}.
For instance, the {\em Jordan geometries} defined over an arbitrary commutative base ring $\K$ (\cite{Be13b})
are manifold laws.

\tableofcontents

\subsection{Tentative description of the sequel}
In Part II of this work \cite{Bexy}, we shall define {\em laws of class $C^n$} and
 prove that they are morphisms of {$2n$-fold categories}, 
and, in particular, of  {\em $n$-fold groupoids}, and study their structure. 
Topics for further work include:  revisiting notions of differential geometry
(and of synthetic differential geometry), in particular, {\em connection theory}, 
{\em Lie groups} and {\em symmetric spaces}, from the groupoid viewpoint
(in this context, the paper \cite{BB} is highly relevant);
a conceptual version of the {\em simplicial approach} presented in \cite{Be13}, 
and, finally, the very intriguing topic of possible {\em non-commutativity of the
base ring}: as noted in \cite{Be08b}, Problem 8, it is possible to
develop most of the first order theory without assuming commutativity of
$\K$. Indeed, it turns out, in the present work, that commutativity of $\K$ does not enter
before dealing with bilinear maps (Section \ref{sec:Law}); the construction of the first order double
category $M^\settt{1}$ goes through for possibly non-commutative rings.
This makes it clear that commutativity becomes crucial for second and higher order calculus, but not
before; and it seems very likely that a careful analysis of this situation may lead to a new conceptual
foundation of {\em super-calculus} (cf.\ \cite{Be08b}, Problem 9).

\subsection{Notation and conventions}\label{ssec:Notation}
Throughout, the letter 
$\K$ denotes a {\em base ring with unit $1$}. Unless otherwise stated, this ring is not equipped with a topology
and not assumed to be commutative.  All $\K$-modules $V,W,\ldots$ are assumed to be
{\em right} $\K$-modules. 
By definition, a {\em linear set} is a pair $(U,V)$, where $V$ is a $\K$-module and $U \subset V$ a
non-empty subset. The linear set $(\{ 0 \},\{ 0\})$ will be denoted by $0$ (``terminal object'').
Informally, by {\em local linear algebra} we mean the theory of linear sets, their prolongations and
morphisms, as developed in this work.

\begin{acknowledgment}
I would like to thank M\'elanie Bertelson for illuminating discussions concerning the paper
\cite{BB} and for explaining to me the usefulness of the groupoid concept in differential geometry, and
Ronnie Brown  for helpful comments on double categories and double groupoids. 
I also thank Anders Kock for his critical and constructive remarks on V1 of this work.
\end{acknowledgment}

\section{Difference factorizers and differential calculus}\label{sec:Diff}

\subsection{Difference factorizer}
In order to get rid of the division by the scalar $t$ in the  difference quotient (\ref{eqn:DQ}),
we define, following a terminology used by Nel \cite{Nel}:

\begin{definition}[Difference factorizer] \label{def:difffactorizer}
Let $U \subset V$ be a linear set (cf.\ conventions above). 
As in \cite{BGN04, Be08, Be11}, we define its {\em first prolongation} by
$$
\boxed{ U^{[1]}  := U^{[1]}_\K := U^{[1]}_{V,\K}:=  \{ (x,v,t) \in V \times V \times \K \mid \, x \in U, \, x+vt \in U \} } .
$$
The {\em non-singular part} of the first prolongation is the set where $t$ is invertible:
\begin{equation}\label{eqn:extended2} 
( U^{[1]})^\times := \{ (x,v,t) \in U^{[1]}  \mid \, t \in \K^\times  \} ,
\end{equation}
For a map $f$ from $U$ to a $\K$-module $W$, a {\em difference factorizer for $f$}  is a map
 $$
 \boxed{
 f^{[1]}:U^{[1]}  \to W 
 \mbox{ such that }
 \forall (x,v,t)\in U^{[1]} :
 f(x+vt) - f(x) =  f^{[1]}(x,v,t)\cdot  t  } \, .
 $$
 \end{definition}
 
% Put differently, we have a first order limited expansion
% $$  f(x+tv)= f(x)+t \, f^{[1]}(x,v,t)  $$
%(see \cite{BeS} for more on limited expansions).
%{\bf Rk: changing $t$ into $-t$ also gives a reasonable theory:
%$f(x-tv)=f(x) - t F(x,v,t)$: super-calculus!}

\nin 
When $(x,v,t)$ belongs to the non-singular part, then 
 $f^{[1]}(x,v,t)$ is necessarily given by (\ref{eqn:DQ}), and the proof of the following
relations is straightforward:  $\forall s,t \in \K^\times$, 
\begin{equation}\label{eqn:a}
f^{[1]} (x,0,t) = 0, 
\end{equation}
\begin{equation}\label{eqn:b}
f^{[1]}(x,v+v',t) = f^{[1]} (x+vt, v',t) + f^{[1]}(x,v,t) ,
\end{equation}
\begin{equation} \label{eqn:c}
f^{[1]}(x,vs,t) =  f^{[1]} (x,v,st) \, s  ,
\end{equation}
and, if $g$ and $f$ are composable, then, for $t\in \K^\times$, 
\begin{equation} \label{eqn:d}
(g \circ f)^{[1]} (x,v,t) = g^{[1]} \bigl( f(x), f^{[1]}(x,v,t) , t \bigr) .
\end{equation}
Now, difference factorizers are not unique -- e.g., the values for $t=0$ are not determined by the 
condition.\footnote{
If $\K$ is a field, then $t=0$ is the only ``exceptional'' value; but it will be very important
to allow more general rings. Note that the case of an integral domain, like $\K=\Z$, and free $\K$-modules, 
behaves much like the case of a field, in the sense that $f^{[1]}(x,v,t)$ is unique for all $t\not=0$.} 
Differential calculus is a means to assign a well-defined value when $t=0$.
We recall briefly the main ideas, following \cite{BGN04}; as in \cite{Be11} we will call
this theory
{\em topological differential calculus}:

\subsection{Topological differential calculus} \label{ssec:topdiff}

\begin{definition}
The {\em assumptions of topological differential calculus} are:
 $\K$ is a {\em topological ring having a dense unit group} $\K^\times$, and $V,W$
are {\em topological $\K$-modules}. Maps $f:U \to W$ are assumed to be defined on open subsets $U \subset V$.
% (or on sets whose interior is open in $U$). 
\end{definition}

\begin{definition}
We say that {\em $f:U \to W$ is of class $C^1_\K$} if $f$ admits a {\em continuous} difference factorizer $f^{[1]}$.
Because of density of $\K^\times$ in $\K$, such a difference factorizer is unique, if it exists, and hence we can define
the {\em first differential of $f$ at $x$} by
$$
df(x) v := f^{[1]}(x,v,0).
$$
\end{definition}

The philosophy of differential calculus can be put with the words of G.\ W.\ Leibniz (quoted in the introduction of
\cite{Be08}):  ``The rules of the finite continue to hold in
the infinite'' --  properties valid for difference factorizers and invertible scalars $t$ continue to hold for ``singular'' scalars,
in particular for the most singular value $t=0$. 
For instance, by density and continuity, identities (\ref{eqn:a}) -- (\ref{eqn:d}), continue to hold  for $t=0$, proving 
the  ``usual'' properties of the differential, {\em linearity} and {\em chain rule}:
\begin{align}
df(x) (v+v') & = df(x)v + df(x)v', \cr
df(x) (vs) & =(df(x)v)s,  \cr
d(g\circ f)(x) v & = dg(f(x)) \bigl( df(x)v \bigr)
\end{align}
which then allow to define manifolds having a linear tangent bundle, and so on.
What we need for such ``invariant'' constructions is essentially only a ``functorial'' rule like the cain rule;
this permits to define bundles, carrying structure according to what is preserved under coordinate changes
(cf.\ Appendix \ref{App:A}).
%We found it useful to recall such generalized nonsense in a ``purified'' way (Appendix \ref{App:A} on
%``primitive manifolds'').%\footnote{To
%be more specific, by ``manifold'' we mean ``manifold with atlas''. In principle, the atlas may be maximal; but if we are
%speaking of bundles, the atlas should be a bundle atlas.}

\section{The first order difference groupoid and its morphisms}\label{sec:First}

\subsection{The first order difference groupoid}

\begin{definition}
If $(U,V)$ is a linear set, we shall henceforth use the notation\footnote{The deeper reason for this change of notation will
only appear at second order, when iterating the constructions: in \cite{BGN04}, the index $[2]$ is used, but in Part II we shall rather use
$\{ 1 , 2 \}$.}
$$
U^\sett{1}:= U^{[1]}, \qquad (U^\sett{1})^\times := (U^{[1]})^\times ,
$$
and we define two surjections called ``source'' and ``target''
\begin{align*}
\pi_0 : U^\sett{1}  \to U \times \K, &  \quad  (x,v;t) \mapsto (x;t)
\cr
\pi_1 :  U^\sett{1}  \to U \times \K, &  \quad  (x,v; t) \mapsto (x +  v t ; t)
\end{align*}
and one injection called ``zero section'' or ``unit section'' 
$$
z : U \times \K \to U^\sett{1}, \quad (x,t) \mapsto (x,0,t) .
$$
\end{definition}

\nin Obviously, $z$ is a {\em bisection} of the projections, i.e.,  $\pi_0 \circ z = \id_U = \pi_1 \circ z$.

\begin{lemma}\label{la:factorizer1}
Assume given a map $f:U \to W$.
Then there is a 1:1-correspondence between
difference factorizers of $f$ and 
maps $f^{\{ 1 \}} : U^\sett{1} \to W^\sett{1}$ that commute with source and target maps
 and coincide with $f$ on the base, in the sense that
$$
\mbox{ for } \sigma = 0,1: \,
\pi_\sigma \circ f^\sett{1} = (f \times \id_\K) \circ \pi_\sigma :\qquad 
\begin{matrix} U^\sett{1} & \stackrel {f^\sett{1}}{ \longrightarrow} & W^\sett{1} \cr
\downdownarrows & & \downdownarrows \cr
U \times \K & \stackrel{f \times \id_\K}{\longrightarrow} & W \times \K
\end{matrix}
$$
Namely, the difference factorizer $f^{[1]}$ corresponds to the map
$$
f^{ \{ 1 \} } :U^\sett{1} \to W^\sett{1}, \quad  (x,v,t) \mapsto  \Bigl( f(x) , f^{[1]}(x,v,t); t \Bigr) .
$$
Moreover, $f^{[1]}$ satisfies condition (\ref{eqn:a}) if, and only if, 
 $f^{\{ 1\} }$ commutes with $z$ in the sense that
$f^\sett{1} \circ z = z \circ (f \times \id_\K)$.
%$$
%\forall x \in U, \forall t \in \K: \qquad  f^{[1]} (x,0,t) = 0 .
%$$
\end{lemma}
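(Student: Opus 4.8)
The plan is to unwind the definitions and check that the assignments in both directions are mutually inverse. First I would observe that any map $g : U^\sett{1} \to W^\sett{1}$ commuting with $\pi_0$ and $\pi_1$ over $f \times \id_\K$ must have the shape $g(x,v,t) = (f(x), h(x,v,t), t)$ for some map $h : U^\sett{1} \to W$: the condition $\pi_0 \circ g = (f \times \id_\K) \circ \pi_0$ forces the first component to be $f(x)$ and the third to be $t$, leaving the middle component as an unconstrained function $h$. So the data of such a $g$ is exactly the data of a map $h : U^\sett{1} \to W$; the content is that the remaining condition $\pi_1 \circ g = (f \times \id_\K) \circ \pi_1$ is equivalent to $h$ being a difference factorizer.

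Next I would carry out that equivalence explicitly. Applying $\pi_1$ to $g(x,v,t) = (f(x), h(x,v,t), t)$ gives $\pi_1(g(x,v,t)) = (f(x) + h(x,v,t)\cdot t,\, t)$, whereas $(f \times \id_\K)(\pi_1(x,v,t)) = (f(x+vt),\, t)$. Equating these (the $\K$-components already agree) yields $f(x+vt) = f(x) + h(x,v,t)\cdot t$ for all $(x,v,t) \in U^\sett{1}$, which is precisely Definition \ref{def:difffactorizer}. Thus $g \mapsto h$ and $h \mapsto g$ (via $g(x,v,t) = (f(x), f^{[1]}(x,v,t), t)$) are inverse bijections between the two sets in question, establishing the 1:1-correspondence.

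For the last sentence, I would simply compute both sides of $f^\sett{1} \circ z = z \circ (f \times \id_\K)$ on a point $(x,t) \in U \times \K$. The left side is $f^\sett{1}(z(x,t)) = f^\sett{1}(x,0,t) = (f(x), f^{[1]}(x,0,t), t)$, and the right side is $z((f\times\id_\K)(x,t)) = z(f(x),t) = (f(x),0,t)$. These agree for all $(x,t)$ if and only if $f^{[1]}(x,0,t) = 0$ for all $x \in U$ and $t \in \K$, which is exactly condition (\ref{eqn:a}) (extended to all $t$, as is implicit in the statement).

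I do not expect any serious obstacle here; the proof is a pure definition-chase. The one point deserving a little care is the bookkeeping of which components are pinned down by the source/target compatibility versus which remain free — making clear that commuting with $\pi_0$ alone already determines everything except the middle slot, so that the correspondence is genuinely with difference factorizers and not with some larger class of maps. I would present this cleanly by first reducing an arbitrary compatible $g$ to its middle component and only then invoking the target condition.
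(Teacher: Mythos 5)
Your proposal is correct and follows essentially the same route as the paper's proof: use the $\pi_0$-compatibility to reduce a general map to the form $(x,v,t)\mapsto(f(x),h(x,v,t),t)$, identify the $\pi_1$-compatibility with the difference-factorizer identity, and check the unit-section condition pointwise (including the same observation that (\ref{eqn:a}) must be read for all $t\in\K$, not just $t\in\K^\times$). No gaps.
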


\begin{proof}
Assume $f^{\{ 1 \}}  : U^\sett{1} \to W^\sett{1}$ is a map. The condition
$\pi_0 \circ f^{ \{ 1 \} } =(f \times \id_\K)  \circ \pi_0$  is equivalent to the existence of a map $F$ such that,
for all $(x,v,t) \in U^\sett{1}$,
$$
f^{ \{ 1 \} } (x,v,t) = \bigl(
f(x) , F(x,v; t); t \bigr),
$$
 and then the condition 
$\pi_1 \circ f^{ \{ 1 \} } =(f \times \id_\K)  \circ \pi_1$ is equivalent to
$$
f(x) + F(x,v;t) \cdot t  = f(x+v t),
$$
for all $(x,v,t) \in U^\sett{1}$. Thus $f^{\{ 1\} }$ commutes with projections iff
 $f^{[1]} (x,v,t):=F(x,v,t)$ is a difference factorizer.
 Finally, the condition $f^\sett{1} (x,0,t) = (f(x),0,t)$ is equivalent to
  $F(x,0,t)=0$, for all $t$, that is, (\ref{eqn:a}). 
\end{proof}

\begin{definition}
Given a map $f:U \to W$, a map $f^\sett{1}:U^\sett{1} \to W^\sett{1}$ as in the lemma will be called
a {\em map  over $f$}, and $f$ will be called the {\em base map} of $f^\sett{1}$.
\end{definition}

\begin{definition} For $a=(x,v,t),a'=(x',v',t') \in U^\sett{1}$ such that
$\pi_1 (a) = \pi_0(a')$ (so $t=t'$ and $x' = x+vt$),  we define
$$
\boxed{
a' \ast a := 
(x+vt , v',t) \ast (x,v,t) := (x , v' + v, t) }.
$$
Note that $a' \ast a$ belongs again to $U^\sett{1}$. Indeed, $x+(v+v')t= x' + v't$, that is, 
\begin{equation} \label{eqn:pia}
\pi_0(a' \ast a) = \pi_0(a), \qquad
\pi_1(a' \ast a)= \pi_1(a').
\end{equation}
\end{definition}

\begin{theorem}\label{th:FirstGroupoid}
The data
$\bigl(\pi_0,\pi_1:U^\sett{1} \downdownarrows U \times \K, z, \ast \bigr)$
define a {\em groupoid}.
This groupoid is a bundle of groupoids over $\K$, i.e.,
for every fixed value of $t \in \K$,
we have a groupoid $(\pi : U_t \downdownarrows U, z, \ast)$,
$$
U_t := \{ (x,v) \mid \, x \in U, x+vt \in U \} \downdownarrows U, \qquad
(x',v') \ast (x,v) = (x,v'+v) ,
$$
$$
U_t \times_U U_t = \{ ((x',v'),(x,v) \in U_t \times U_t \mid \, x' = x+vt, \, t' = t \} .
$$
\end{theorem}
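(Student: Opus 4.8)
The plan is to verify directly that the data $\bigl(\pi_0,\pi_1:U^\sett{1} \downdownarrows U \times \K,\, z,\, \ast\bigr)$ satisfy the groupoid axioms, and then observe that everything respects the projection onto the $t$-coordinate, so that the fiber over each $t \in \K$ is itself a groupoid. First I would recall that the composability condition $\pi_1(a) = \pi_0(a')$ for $a = (x,v,t)$, $a' = (x',v',t')$ forces $t = t'$ and $x' = x + vt$, so the only composable pairs are those of the stated form $(x+vt, v', t) \ast (x, v, t)$; the identity (\ref{eqn:pia}) already recorded in the text shows that $a' \ast a$ again lies in $U^\sett{1}$ and has the correct source and target. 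Associativity is then an immediate computation: both $(a'' \ast a') \ast a$ and $a'' \ast (a' \ast a)$ equal $(x, v'' + v' + v, t)$ whenever all the composability conditions hold, since the middle-coordinate addition in $\K$ (or in $V$) is associative and the base points are propagated consistently.

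Next I would check the units. For $(x,t) \in U \times \K$ the element $z(x,t) = (x,0,t)$ satisfies $\pi_0(z(x,t)) = (x,t) = \pi_1(z(x,t))$, and for any $a = (x,v,t) \in U^\sett{1}$ we have $a \ast z(\pi_0(a)) = (x,v,t) \ast (x,0,t) = (x, v+0, t) = a$, while $z(\pi_1(a)) \ast a = (x+vt, 0, t) \ast (x,v,t) = (x, 0+v, t) = a$; so $z$ furnishes left and right units, and (as already noted after the definition of $z$) it is a bisection. For inverses, given $a = (x,v,t)$ I set $a^{-1} := (x+vt, -v, t)$, which lies in $U^\sett{1}$ because $x + vt \in U$ and $(x+vt) + (-v)t = x \in U$. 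One checks $\pi_0(a^{-1}) = (x+vt, t) = \pi_1(a)$ and $\pi_1(a^{-1}) = (x, t) = \pi_0(a)$, so $a$ and $a^{-1}$ are composable both ways, and $a^{-1} \ast a = (x+vt,-v,t) \ast (x,v,t) = (x, -v+v, t) = (x,0,t) = z(\pi_0(a))$, and symmetrically $a \ast a^{-1} = z(\pi_1(a))$. This establishes the groupoid structure.

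For the fibered statement, I would simply note that $\pi_0$ and $\pi_1$ both preserve the $\K$-coordinate $t$, and $a' \ast a$ is only defined when $a, a'$ have the same $t$ and then has that same $t$; the zero section preserves $t$ as well. Hence for each fixed $t$ the subset $U_t = \{(x,v) \mid x \in U,\ x+vt \in U\}$, viewed as sitting over $U$ via $\pi\colon (x,v) \mapsto x$ (source) and $(x,v)\mapsto x+vt$ (target), with unit $x \mapsto (x,0)$ and composition $(x',v') \ast (x,v) = (x, v'+v)$ whenever $x' = x+vt$, is exactly the restriction of the above groupoid to the fiber, hence is itself a groupoid; the set of composable pairs is as displayed. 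I do not expect any genuine obstacle here: all axioms reduce to the abelian group structure of the middle ($v$) coordinate together with bookkeeping of how the base point $x$ shifts by $vt$, and the main thing to be careful about is stating the composability conditions precisely enough that every composite written down actually lands in $U^\sett{1}$ — which is handled by the observation that $x + (v+v')t = (x+vt) + v't$.
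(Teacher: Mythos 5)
Your proof is correct and follows essentially the same route as the paper: direct verification of associativity (reducing to associativity of addition in $V$), units via the zero section, the explicit inverse $(x+vt,-v,t)$, and the observation that all structure maps preserve the $\K$-coordinate so that each fiber $U_t$ is a groupoid. The only extra care you take — checking explicitly that $(x+vt,-v,t)$ and each composite actually land in $U^\sett{1}$ — is a welcome (if routine) addition that the paper leaves implicit.
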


\begin{proof} We check the defining properties of a groupoid (Appendix \ref{App:Cat}):
let $a=(x,v,t)$, $a=(x',v',t')$, $a''=(x'',v'',t'')$.
As noted above, (\ref{eqn:pia}) holds.
To check associativity,
$$
(x'',v'',t'') \ast ((x',v',t') \ast (x,v,t)) =
(x'',v'',t'') \ast (x,v'+v,t) = (x,v'' + (v'+v) , t),
$$
 $$
((x'',v'',t'') \ast (x',v',t')) \ast (x,v,t) = (x',v''+v',t') \ast (x,v,t) = (x,(v''+v')+v,t) ,
$$
and hence associativity of $\ast$ follows from associativity of addition in $(V,+)$.
Next,
$$
(x,0,t) \ast (x',v,t) = (x', 0 + v,t) = (x',v,t), 
$$
$$
(x,v,t) \ast (x,0,t) = (x, v + 0, t) = (x,v,t),
$$
hence $z(x,t)$ is a unit for $\ast$. 
We show that $(x+vt,-v,t)$ is an inverse of $(x,v,t)$:
$$
(x,v,t) \ast (x+ vt,-v,t) = (x+ vt,0,t),
$$
$$
(x+vt ,-v,t) \ast (x,v,t) = (x,0,t) .
$$
It is obvious from these formulae that, for any fixed $t$, we get again a groupoid. 
\end{proof}

\begin{definition}
The groupoid $(\pi_0,\pi_1,U^\sett{1} \downdownarrows U\times \K,z,\ast)$
defined by the theorem is called the {\em first order difference groupoid of $U$}.
The symbol $U^\sett{1}$ will often be used both to denote the morphism set 
and the groupoid itself, and we use $(U^\sett{1})^\times$ for the groupoid with underlying morphism
set $(U^\sett{1})^\times$ defined by (\ref{eqn:extended2}).
\end{definition}

\begin{theorem}[The groupoids $U_0$ and $U_1$, and Connes' tangent groupoid] \label{la:isos} $ $

\begin{enumerate}
\item
The groupoid $U_0$ is  a ``group bundle'' $TU:=U_0 = U \times V$ over the base $U$, with fiber the group
$(V,+)$.
\item 
The groupoid $U_1$
is isomorphic to the {\em pair groupoid $U \times U$ over $U$}.
(See Example \ref{ex:pairgroupoid}: composition on $U \times U$ is
$(x,y) \circ (y,z) = (x,z)$.)
\item
More generally, for every $t \in \K^\times$, the groupoid $U_t$ is isomorphic to the pair groupoid over $U$, 
via 
\begin{align*}
\Phi_t :
U_t &  \to U \times U, \quad
(x,v)   \mapsto (y,x) := (x+vt,x) .
\end{align*}
The {\em non-singular groupoid}
 $(U^\sett{1})^\times$ is, via the map $(x,v,t) \mapsto (x+vt,x,t)$, isomorphic to the direct product 
$U\times U \times \K^\times$ of the pair groupoid of $U$ with the trivial groupoid of $\K^\times$.
\item
If $\K$ is a field, then $U^\sett{1}$ is isomorphic to a disjoint union of groupoids
$$
TU \, \,  \dot\cup  \, \, [(U \times U) \times \K^\times] .
$$
\end{enumerate} 
\end{theorem}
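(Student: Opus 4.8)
The plan is to verify each of the four claims in turn, in each case by exhibiting the relevant isomorphism explicitly and checking it respects the groupoid structure described in Theorem \ref{th:FirstGroupoid}.

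For (1), I would simply substitute $t=0$ into the description of $U_t$ from Theorem \ref{th:FirstGroupoid}: the compatibility condition $x' = x + vt$ becomes $x' = x$, so $U_0 = \{(x,v) \mid x \in U,\ v \in V\} = U \times V$, with both source and target equal to $\pi(x,v) = x$ and composition $(x,v') \ast (x,v) = (x, v'+v)$. This is visibly the group bundle over $U$ with fiber $(V,+)$; there is nothing to prove beyond reading off the formulas, and the identification $TU := U_0 = U \times V$ is a definition.

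For (2) and (3), I would prove the general statement (3) and deduce (2) as the case $t=1$. Fix $t \in \K^\times$ and consider $\Phi_t : U_t \to U \times U$, $(x,v) \mapsto (x+vt, x)$. First check it is well-defined: if $(x,v) \in U_t$ then by definition $x \in U$ and $x+vt \in U$, so $(x+vt,x) \in U \times U$. Next check it is a bijection, with inverse $(y,x) \mapsto (x, (y-x)t^{-1})$, which requires $t \in \K^\times$; here one uses that $y,x \in U$ forces $x \in U$ and $x + ((y-x)t^{-1})t = y \in U$, so the preimage lies in $U_t$. Then check it is a functor: the source/target maps match (target of $(x,v)$ is $x+vt$, source is $x$, matching the pair groupoid convention $(x,y)\circ(y,z)=(x,z)$ where the first slot is the target), and for composition, $(x,v')\ast(x,v) = (x,v'+v)$ maps under $\Phi_t$ to $(x+(v'+v)t, x)$, while in $U\times U$ we compose $\Phi_t(x+vt, v') \circ \Phi_t(x,v) = (x+vt+v't,\, x+vt)\circ(x+vt,\,x) = (x+v't+vt, x)$ --- the two agree. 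The statement about $(U^\sett{1})^\times$ then follows by letting $t$ range: the map $(x,v,t)\mapsto(x+vt,x,t)$ assembles the $\Phi_t$ into a single bijection onto $U\times U\times \K^\times$, and since $\ast$ only composes elements with equal $t$-coordinate and acts fiberwise, this is a groupoid isomorphism onto the product of the pair groupoid of $U$ with the trivial (only-identities... actually the ``unit'' / discrete-object) groupoid on $\K^\times$.

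For (4), I would invoke the footnote's observation: when $\K$ is a field, the only non-invertible scalar is $0$, so $U^\sett{1}$ decomposes as a set into the disjoint union $U_0 \sqcup (U^\sett{1})^\times$, and since $\ast$ never composes an element with $t=0$ against one with $t \neq 0$, this set-level decomposition is a decomposition of groupoids into a disjoint union of subgroupoids. Applying (1) to the first piece and (3) to the second gives $U^\sett{1} \cong TU \ \dot\cup\ [(U\times U)\times\K^\times]$. The main obstacle --- really the only point needing care --- is bookkeeping the source/target conventions consistently (the pair groupoid convention $(x,y)\circ(y,z)=(x,z)$ versus our $\pi_0,\pi_1$), so that $\Phi_t$ comes out covariant rather than contravariant; once that is pinned down, everything is a short substitution.
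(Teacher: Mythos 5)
Your proof is correct and follows essentially the same route as the paper: (1) by inspection of the formulae, (3) by direct verification that $\Phi_t$ is a bijective functor with inverse $(y,x)\mapsto(x,(y-x)t^{-1})$, (2) as the special case $t=1$, and (4) from the decomposition $\K=\{0\}\,\dot\cup\,\K^\times$ together with the fact that $\ast$ preserves the $t$-coordinate. The only detail the paper records that you pass over is the unit check $\Phi_t(x,0)=(x,x)$, which is immediate.
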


\begin{proof} (1)  is obvious from the formulae.  To prove (3), note that 
$\pr_2 \circ \Phi_t(x,v) = x = \pi_0(x,v)$, 
$\pr_1 \circ \Phi_t(x,v)= x+vt = \pi_1(x,v)$, and
\begin{align*}
\Phi_t ((x+tv,v') \ast (x,v)) & =  \Phi_t(x, v+v') = (x+(v'+v)t,x) 
\cr
& = (x+v't + vt, x+vt) \circ (x+vt,x) =
\Phi_t(x+tv,v') \circ \Phi_t(x,v).
\end{align*}
Concerning units, note that $\Phi_t (x,0) = (x,x)$,
proving that $\Phi_t$ is a morphism. It is bijective since
$(y,x) \mapsto (x, (y-x)t\inv)$ is an inverse map. 
Finally, (2) is the  case $t=1$ of (3), and (4) follows from (3) since, for a field, $\K = \{ 0 \} \, \dot\cup \,  \K^\times$.
\end{proof}

For $\K=\R$, the construction from Part (4) corresponds to Connes' construction of the
{\em tangent groupoid}, \cite{Co94}, II.5. Note that our construction gives, when
 $V \cong \R^n$, ready-made the topology
defined by Connes in loc.cit., p.\ 103. 

\begin{remark}[Pregroupoids]
By forgetting the units, $U^\sett{1}$ defines (just like any groupoid) a 
 {\em pregroupoid} (see  \ref{rk:pregroupoids}):
the ternary product $[a'',a',a] = a'' \ast (a')\inv \ast a$ is explicitly given by  $[a'',a',a] = (x, v'' - v' + v,t)$. 
% But the zero section is canonical, so for the moment continue with groupoids.
\end{remark}

\subsection{Morphisms of  first order difference groupoids}

\begin{theorem}\label{th:morph1} 
Given a map $f:U \to W$,  there is a 1:1-correspondence between
\begin{enumerate}
\item
difference factorizers of $f$ satisfying Condition (\ref{eqn:b}) for all $t \in \K$, 
\item
morphisms of groupoids  $f^{\{ 1 \}} : U^\sett{1} \to W^\sett{1}$ over $f$.
%(in the sense of Lemma \ref{la:factorizer1}).
\end{enumerate}
\end{theorem}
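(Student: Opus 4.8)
The plan is to build on Lemma \ref{la:factorizer1}, which already establishes the bijection between difference factorizers of $f$ and maps $f^\sett{1}:U^\sett{1}\to W^\sett{1}$ over $f$, and to identify precisely which extra condition on the difference factorizer corresponds to $f^\sett{1}$ being a \emph{groupoid} morphism rather than merely a map over $f$. Since a morphism of groupoids must commute with source, target, the unit section, and the composition law $\ast$, and since commuting with $\pi_0,\pi_1$ is exactly what ``map over $f$'' means, the content of the theorem is that compatibility with $\ast$ (together with compatibility with $z$, which is automatic for groupoid morphisms as it is implied by compatibility with $\ast$) is equivalent to Condition (\ref{eqn:b}) holding for \emph{all} $t\in\K$, not just invertible $t$.

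First I would recall the setup: write $f^\sett{1}(x,v,t)=(f(x),f^{[1]}(x,v,t);t)$, where $f^{[1]}$ is the associated difference factorizer. Then I would take two $\ast$-composable elements $a=(x,v,t)$ and $a'=(x+vt,v',t)$ in $U^\sett{1}$, compute $a'\ast a=(x,v'+v,t)$, and apply $f^\sett{1}$ to both sides. On one side, $f^\sett{1}(a'\ast a)=\bigl(f(x),f^{[1]}(x,v'+v,t);t\bigr)$. On the other side, $f^\sett{1}(a')\ast f^\sett{1}(a)$ is well-defined in the groupoid $W^\sett{1}$ precisely because $\pi_1 f^\sett{1}(a)=\pi_0 f^\sett{1}(a')$ (which holds since $f^\sett{1}$ is over $f$), and it equals $\bigl(f(x),f^{[1]}(x+vt,v',t)+f^{[1]}(x,v,t);t\bigr)$ by the definition of $\ast$ in $W^\sett{1}$. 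Comparing the middle coordinates, $f^\sett{1}$ commutes with $\ast$ if and only if $f^{[1]}(x,v+v',t)=f^{[1]}(x+vt,v',t)+f^{[1]}(x,v,t)$ for all $(x,v,t)$ with $(x,v,t),(x+vt,v',t)\in U^\sett{1}$, which is exactly Condition (\ref{eqn:b}) for all $t\in\K$. I would also note that a groupoid morphism automatically preserves units: $f^\sett{1}(z(x,t))=f^\sett{1}(x,0,t)$ must be the unit $z(f(x),t)=(f(x),0,t)$ over $(f(x),t)$, because in a groupoid a morphism sending identities to identities is forced, hence Condition (\ref{eqn:a}) is a consequence of (\ref{eqn:b}) — indeed $f^{[1]}(x,0,t)=f^{[1]}(x,0+0,t)=f^{[1]}(x,0,t)+f^{[1]}(x,0,t)$ in the group $(W,+)$ forces $f^{[1]}(x,0,t)=0$ — and preservation of inverses then follows formally from preservation of composition and units in any groupoid.

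For the converse direction, given a difference factorizer $f^{[1]}$ satisfying (\ref{eqn:b}) for all $t$, I would invoke Lemma \ref{la:factorizer1} to get the map $f^\sett{1}$ over $f$, observe that (\ref{eqn:b}) forces (\ref{eqn:a}) as just computed (so $f^\sett{1}$ also commutes with $z$), and then the computation above read backwards shows $f^\sett{1}(a'\ast a)=f^\sett{1}(a')\ast f^\sett{1}(a)$; together with commuting with $\pi_0,\pi_1$ and $z$, this is the full definition of a groupoid morphism as spelled out in Appendix \ref{App:Cat}. The bijection itself is then inherited from Lemma \ref{la:factorizer1}: the correspondence $f^{[1]}\leftrightarrow f^\sett{1}$ is already known to be 1:1 between all difference factorizers and all maps over $f$, and we have merely cut down both sides by matching extra conditions, so it restricts to a bijection between the two subsets described in the theorem.

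I do not anticipate a serious obstacle here; the only point requiring a little care is the bookkeeping about \emph{which} identities are automatic. The earlier remark after equation (\ref{eqn:d}) established (\ref{eqn:b}) only for invertible $t$ via the explicit difference-quotient formula, so the genuinely new requirement is precisely that (\ref{eqn:b}) extend to $t=0$ and to non-invertible $t$ in general; the proof makes clear that this is exactly the algebraic shadow of the groupoid-morphism axiom, with no continuity or density hypotheses needed. The mild subtlety is ensuring that composability in $W^\sett{1}$ of $f^\sett{1}(a')$ and $f^\sett{1}(a)$ is indeed guaranteed (it is, since $f^\sett{1}$ commutes with $\pi_0,\pi_1$), so that the equation $f^\sett{1}(a'\ast a)=f^\sett{1}(a')\ast f^\sett{1}(a)$ is even meaningful before one checks it holds — but this is immediate and I would mention it in one sentence.
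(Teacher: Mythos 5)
Your proposal is correct and follows essentially the same route as the paper: reduce to Lemma \ref{la:factorizer1}, compute both sides of the $\ast$-compatibility condition to see it is exactly (\ref{eqn:b}), and observe that (\ref{eqn:a}) is a formal consequence of (\ref{eqn:b}) (the paper gets it by setting $v=0$ in (\ref{eqn:b}); your cancellation argument in $(W,+)$ is an equivalent way to say the same thing). No gaps.
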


\begin{proof}
In view of Lemma \ref{la:factorizer1}, 
it only remains to show that $f^\sett{1}$ preserves $\ast$ iff
$f^{[1]}$ satisfies (\ref{eqn:b}).
We compute
\begin{align*}
f^{\{ 1\} } ((x',v',t') \ast (x,v,t) ) &= 
f^{\{ 1\} } (x,v' + v,t) =  (f(x), f^{[1]} (x,v'+v,t),t) 
\cr
f^{\{ 1\} }(x',v',t') \ast f^{\{ 1\} }(x,v,t)  &=
(f(x),f^{[1]} (x+vt,v',t),t)  \ast (f(x), f^{[1]}(x,v,t),t) 
\cr
&=
\bigl(f(x), f^{[1]}(x+vt,v',t) + f^{[1]}(x,v,t), t \bigr) \, .
\end{align*}
Thus equality holds iff (\ref{eqn:b}) holds for $f^{[1]}$.
Finally, note that Condition (\ref{eqn:a}) follows from (\ref{eqn:b}) by taking $v=0$ there.
\end{proof}

\begin{example}\label{ex:lin}
Every $\K$-linear map $f:V \to W$ gives rise to a morphism
$$
f^\sett{1}:V^\sett{1} = V^2 \times \K \to W^\sett{1} = W^2 \times \K, \quad
(x,v;t) \mapsto (f(x),f(v);t) .
$$
Indeed, since, by linearity, $f(x+vt) - f(x) = f(v) t$, the map $f^{[1]}(x,v;t)=f(v)$
 is a difference factorizer, and it satisfies
(\ref{eqn:a}) since $f$ is linear. 
\end{example}

\begin{example}
If $f(x)=\alpha x + b$ is an affine map, then
$f^\sett{1}(x,v;t) = (\alpha x + b , \alpha v;t)$ is a groupoid morphism.
\end{example}

\begin{theorem}[General morphisms]\label{th:morph!}
Assume given two maps $f:U \to W$ and $\varphi:\K \to \K'$.
Then the pair of maps
\begin{align*}
U^\sett{1}_\K \to W^\sett{1}_{\K'}, & \quad 
(x,v;t) \mapsto \bigl( f(x), F(x,v;t);\varphi(t) \bigr), \cr
U \times \K \to W \times \K', & \quad  \bigl(x,t) \mapsto (f(x),\varphi(t) \bigr)
\end{align*}
is a groupoid morphism if, and only if, $F$ is a {\em $\varphi$-twisted difference factorizer}, i.e.,
$$
\forall (x,v,t) \in U^\sett{1}:\qquad
f(x) + F(x,v;t) \cdot \varphi(t) = f(x + vt) 
$$
which satisfies, whenever defined, the condition corresponding to  (\ref{eqn:b}):
$$
F(x + vt;v',t) + F(x,v;t) = F(x,v'+v;t) .
$$
\end{theorem}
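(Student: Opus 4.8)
The plan is to mimic closely the proof of Theorem \ref{th:morph1}, replacing the identity map on $\K$ by the (set-theoretic) map $\varphi$ throughout, and checking that the groupoid axioms survive this ``twisting''. First I would observe that the two displayed maps are compatible with source and target exactly when $F$ is a $\varphi$-twisted difference factorizer: writing $g^\sett{1}(x,v;t) = (f(x),F(x,v;t);\varphi(t))$, the condition $\pi_0 \circ g^\sett{1} = (f\times\varphi)\circ\pi_0$ holds automatically by inspection of the first and third coordinates, while $\pi_1 \circ g^\sett{1} = (f\times\varphi)\circ\pi_1$ unwinds to $f(x) + F(x,v;t)\cdot\varphi(t) = f(x+vt)$ for all $(x,v,t)\in U^\sett{1}$. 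This is the verbatim analogue of the first half of Lemma \ref{la:factorizer1} and of its use in Theorem \ref{th:morph1}, with $\varphi(t)$ in place of $t$ as the ``length'' of the composed arrow in $W^\sett{1}_{\K'}$. Note that one must be slightly careful here: the target $g^\sett{1}(x,v;t)$ is required to lie in $W^\sett{1}_{\K'}$, i.e.\ one needs $f(x)\in W$ and $f(x)+F(x,v;t)\varphi(t)\in W$ — but the latter equals $f(x+vt)$, which is in $W$ since $x+vt\in U$; so the twisted-factorizer equation is also exactly what guarantees well-definedness into $W^\sett{1}_{\K'}$.

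Next I would address compatibility with the composition law $\ast$. Two arrows $a=(x,v,t)$ and $a'=(x',v',t')$ of $U^\sett{1}_\K$ are composable iff $t=t'$ and $x'=x+vt$; their images $g^\sett{1}(a)=(f(x),F(x,v;t);\varphi(t))$ and $g^\sett{1}(a')=(f(x+vt),F(x+vt,v';t);\varphi(t))$ then have matching third coordinate $\varphi(t)$ and satisfy $f(x+vt) = f(x) + F(x,v;t)\varphi(t)$ (by the twisted-factorizer identity just established), which is precisely the condition $\pi_1(g^\sett{1}(a)) = \pi_0(g^\sett{1}(a'))$ needed for composability in $W^\sett{1}_{\K'}$. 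Computing both sides exactly as in Theorem \ref{th:morph1}:
\[
g^\sett{1}(a'\ast a) = g^\sett{1}(x,v'+v,t) = \bigl(f(x), F(x,v'+v;t);\varphi(t)\bigr),
\]
\[
g^\sett{1}(a')\ast g^\sett{1}(a) = \bigl(f(x), F(x+vt,v';t) + F(x,v;t);\varphi(t)\bigr),
\]
so the morphism condition on $\ast$ is equivalent to $F(x+vt,v';t) + F(x,v;t) = F(x,v'+v;t)$, i.e.\ the stated analogue of (\ref{eqn:b}). Finally, compatibility with units is automatic: as in Theorem \ref{th:morph1}, setting $v=0$ in the (\ref{eqn:b})-analogue gives $F(x,0;t)=0$, so $g^\sett{1}(z(x,t)) = (f(x),0;\varphi(t)) = z(f(x),\varphi(t))$, and a groupoid morphism that preserves source, target and composition automatically preserves inverses; one may double-check $g^\sett{1}((x+vt,-v,t)) = (f(x+vt),-F(x,v;t);\varphi(t))$ is the inverse of $g^\sett{1}(x,v,t)$ using the $\ast$-relation with $v'=-v$.

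I do not anticipate a genuine obstacle: the argument is essentially a parametrized rerun of Theorems \ref{th:FirstGroupoid}, \ref{th:morph1} with the structure map $\varphi:\K\to\K'$ inserted. The one point that deserves a sentence of care — and the closest thing to a subtlety — is that $\varphi$ is merely a map of sets (not assumed a ring homomorphism), so one must resist the temptation to ``simplify'' $F(x,v;t)\cdot\varphi(t)$ using any multiplicativity of $\varphi$; everything goes through because the only algebraic facts used are the module axioms in $V,W$ and associativity/commutativity of addition, exactly as in the proof of Theorem \ref{th:FirstGroupoid}. A second, purely bookkeeping, point is to make sure both ``directions'' of the 1:1-correspondence are recorded: given such a pair of maps one reads off $F$ and verifies the two displayed conditions; conversely, given $F$ satisfying them, the formulas define a groupoid morphism — but since every step above is an ``if and only if,'' both directions come for free.
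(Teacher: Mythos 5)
Your proposal is correct and follows exactly the route the paper takes: the paper's own proof is just a two-sentence reference back to Lemma \ref{la:factorizer1} and Theorem \ref{th:morph1}, and you have written out precisely those arguments with $\varphi$ inserted in place of $\id_\K$. The extra remarks on well-definedness into $W^\sett{1}_{\K'}$ and on $\varphi$ being merely a set-map are sound and only make explicit what the paper leaves implicit.
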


\begin{proof}
By the arguments given in the proof of Lemma \ref{la:factorizer1},  compatibility with $\pi_0$ is equivalent
to the existence of a map $F$ as in the defining formula from the theorem, and compatibility with $\pi_1$ then
is equivalent to saying that $F$ is a $\varphi$-twisted difference factorizer of $f$.
As in the preceding proof it is seen that compatibility with $\ast$ then amounts to the last condition
stated in the theorem. 
\end{proof} 

\begin{definition}
We say that a groupoid morphism $U^\sett{1}\to W^\sett{1}$ is {\em of the first kind}, or: {\em spacial},
if it is of the above form with $\K=\K'$ and $\varphi = \id_\K$, and {\em of the second kind}, or: {\em internal}, 
if it is of the above form with $f=\id_U$. 
\end{definition} 

\begin{example}\label{ex:scaling}
For $f =\id$, and $F(v,t):=F(x,v;t)$ independent of $x$, the conditions read
$$
F(v,t) \cdot \varphi(t) = vt, \qquad F(v',t) + F(x,t) = F(v'+v,t).
$$
For instance, taking, for  $s \in \K^\times$ fixed, $\varphi(t) := st$ and $F(v,t) := v s\inv$, the conditions are satisfied
(giving rise to {\em scaling automorphisms}, see next chapter).
\end{example}

\begin{example}
If $\K=\C$ and $U = V = W = \C^n$, then complex conjugation
$f(z) =\overline z$, $F(z,v;t) = \overline v$, $\varphi(t) = \overline t$ defines a groupoid automorphism.
More generally, every ring automorphism $\varphi$ of $\K$ together with 
a $\varphi$-conjugate linear map $f$ gives rise, in the same way, to a groupoid morphism
$(x,v;t) \mapsto (f(x),f(v);\varphi(t))$.
\end{example}

\subsection{The topological case, and first difference groupoid of a manifold}
Recall from Subsection \ref{ssec:topdiff}
the framework of topological differential calculus. In this case, the preceding results carry over
to the manifold level without any difficulties:

\begin{theorem}\label{th:Manifold-one} 
Assume $\K$ is a topological ring with dense unit group, 
$V,W$ Hausdorff topological $\K$-modules and $M,N$ are Hausdorff $C^1_\K$-manifolds
 modelled on $V$, resp.\ on $W$.
\begin{enumerate}
\item
A  $C^1_\K$-map $f:U \to W$ induces a morphism
of groupoids $f^\sett{1} :U^\sett{1} \to W^\sett{1}$.
\item
To the manifold $M$ we can
associate a bi-bundle $\pi_0,\pi_1: M^\sett{1} \downdownarrows M \times \K$,
carrying a canonical continuous groupoid structure.
\item
 The groupoid $M^\sett{1}$  is a bundle of groupoids over $\K$,   and hence,
for all $t \in \K$,  the fiber $M_t$ over $t$  is  a continuous groupoid with object set $M$. 
\item
The groupoid  $M_0$ is the usual tangent bundle
(additive group bundle), and $M_1$ is isomorphic to the pair groupoid over $M$. 
More generally, the groupoid $(M^\sett{1})^\times$ lying over $\K^\times$ is naturally isomorphic to  the direct product
of the pair groupoid of $M$ with the trivial groupoid of $\K^\times$. 
If $\K$ is a field, then $M^\sett{1}$ is the disjoint union of that groupoid with the tangent bundle. 
\item
Let  $f:M \to N$ be a map.
Then $f$ is of class $C^1_\K$ if, and only if, it extends to a continuous morphism of groupoids
$f^\sett{1} : M^\sett{1} \to N^\sett{1}$.
Put differently:
there is a 1:1-correspondence between $C^1$-maps $M\to N$ and continuous groupoid morphisms
$M^\sett{1} \to N^\sett{1}$ of the first kind. 
\item
There is a natural homeomorphism of bundles over $M \times N \times \K$
$$
(M \times N)^\sett{1} \quad \cong \quad M^\sett{1} \times_\K N^\sett{1}.
$$
In other words, for all $t \in \K$, we have $(M \times N)_t \cong M_t \times N_t$.
\end{enumerate}
\end{theorem}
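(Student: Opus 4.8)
The plan is to prove Theorem \ref{th:Manifold-one} one item at a time, leveraging the already-established local (chart-level) results from Sections \ref{sec:Diff} and \ref{sec:First} and the manifold-assembly machinery of Appendix \ref{App:A}. For (1), there is essentially nothing to do: a $C^1_\K$ map admits a \emph{continuous} difference factorizer $f^{[1]}$, which by density of $\K^\times$ satisfies (\ref{eqn:a})--(\ref{eqn:d}) for all $t\in\K$; in particular it satisfies (\ref{eqn:b}) for all $t$, so Theorem \ref{th:morph1} applies and yields a groupoid morphism $f^\sett{1}:U^\sett{1}\to W^\sett{1}$, which is continuous because $f^{[1]}$ is.

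For (2)--(4), the key point is \emph{functoriality} of the construction $U\mapsto U^\sett{1}$ under $C^1_\K$ maps, i.e.\ $(g\circ f)^\sett{1}=g^\sett{1}\circ f^\sett{1}$ and $(\id_U)^\sett{1}=\id_{U^\sett{1}}$, which follows from (\ref{eqn:d}) extended to $t=0$ by continuity/density. Hence if $(U_i,\varphi_{ij})$ is a $C^1_\K$-atlas for $M$, the transition maps $\varphi_{ij}$ induce continuous groupoid isomorphisms $\varphi_{ij}^\sett{1}$ satisfying the cocycle condition, so by the general gluing principle of Appendix \ref{App:A} one can glue the local pieces $U_i^\sett{1}$ into a bundle $\pi_0,\pi_1:M^\sett{1}\downdownarrows M\times\K$; since each $\varphi_{ij}^\sett{1}$ is a groupoid morphism over $\varphi_{ij}\times\id_\K$, the locally defined $\ast$, $z$, $\pi_0$, $\pi_1$ agree on overlaps and descend to $M^\sett{1}$, giving the continuous groupoid structure of (2). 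For (3), fixing $t$ commutes with gluing, so $M_t$ is obtained by gluing the $(U_i)_t$ along $(\varphi_{ij})_t$, yielding a continuous groupoid over the object set $M$. For (4), the isomorphisms of Theorem \ref{la:isos} are natural in $U$ (one checks $\Phi_t$ commutes with the maps induced by chart changes, using that a $C^1_\K$ diffeomorphism $\psi$ satisfies $\psi^{[1]}(x,v,t)=(\psi(x+vt)-\psi(x))t^{-1}$ for $t\in\K^\times$, hence $\Phi_t$ intertwines $\psi^\sett{1}$ with $\psi\times\psi$ on the pair groupoid), so $M_0\cong TM$ as additive group bundles, $(M^\sett{1})^\times\cong (M\times M)\times\K^\times$, and $M_1\cong M\times M$; the field case of a disjoint union is then immediate from $\K=\{0\}\,\dot\cup\,\K^\times$.

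For (5), the ``only if'' direction is (1). For ``if'', suppose $f:M\to N$ extends to a continuous groupoid morphism $f^\sett{1}:M^\sett{1}\to N^\sett{1}$ of the first kind; read in charts $U_i$ of $M$ and $V_j$ of $N$ with $f(U_i)\subset V_j$, the map $f^\sett{1}$ becomes a continuous map $U_i^\sett{1}\to V_j^\sett{1}$ over $f$ commuting with $\pi_0,\pi_1,\ast$, so by Lemma \ref{la:factorizer1} and Theorem \ref{th:morph1} its ``middle component'' is a continuous difference factorizer of $f$ read in those charts; since $M,N$ are $C^1_\K$-manifolds and continuous difference factorizers transform correctly under chart change (again by (\ref{eqn:d}) at $t=0$), this exactly says $f$ is of class $C^1_\K$. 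The 1:1 correspondence then follows because a continuous difference factorizer is unique (density of $\K^\times$), so $f^\sett{1}$ is uniquely determined by $f$ and vice versa. For (6), the product manifold $M\times N$ has product charts $U_i\times V_j$, and at the chart level one has the evident identification $(U\times V)^\sett{1}=\{((x,y),(v,w),t)\}\cong U^\sett{1}\times_\K V^\sett{1}$, compatible with $\pi_0,\pi_1,z,\ast$ and with chart changes; gluing gives the natural homeomorphism $(M\times N)^\sett{1}\cong M^\sett{1}\times_\K N^\sett{1}$, and fixing $t$ gives $(M\times N)_t\cong M_t\times N_t$.

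The main obstacle is item (2): one must verify carefully that the chart-change maps $\varphi_{ij}^\sett{1}$ form a genuine cocycle of \emph{continuous groupoid} isomorphisms and that the gluing principle of Appendix \ref{App:A} applies to the bi-bundle $\pi_0,\pi_1:M^\sett{1}\downdownarrows M\times\K$ together with all its algebraic structure ($z$ and $\ast$) simultaneously, so that the glued object is not merely a topological space but inherits a well-defined continuous groupoid law. The functoriality identity $(g\circ f)^\sett{1}=g^\sett{1}\circ f^\sett{1}$ extended to $t=0$ (from (\ref{eqn:d}) by density and continuity) is the technical heart that makes the cocycle condition hold; once that is in hand, items (1), (3)--(6) are comparatively routine, reducing either to direct local computation or to ``naturality plus gluing''.
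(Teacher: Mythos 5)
Your overall strategy --- prove everything in charts, then glue via Theorem \ref{th:reconstruct} using the cocycle of transition maps $\phi_{ij}^\sett{1}$ --- is the same as the paper's, and items (1), (5), (6) are handled correctly. But there is a genuine gap in your treatment of (2) (which then propagates to (3) and (4)): you identify the functoriality/cocycle identity as ``the technical heart'', whereas the real difficulty lies elsewhere, and your argument as written does not resolve it. Gluing the local pieces $(U_i)^\sett{1}$ indeed produces the space $M^\sett{1}$, the projections and the unit section, and the chart-level products do agree on overlaps. The problem is the \emph{domain} of the global law $\ast$: a composable pair $(a',a)$ with $\pi_1(a)=\pi_0(a')$ involves \emph{three} points $x=\pi_0(a)$, $u=\pi_1(a)=\pi_0(a')$, $y=\pi_1(a')$ of $M$, and for invertible $t$ these can be far apart. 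The element $a$ lives in a chart containing $x$ and $u$, and $a'$ in a chart containing $u$ and $y$, but nothing in your argument produces a single chart containing all three, and without one the chart-wise formula for $a'\ast a$ cannot be applied. ``Agreement on overlaps'' only defines $\ast$ on those composable pairs that happen to lie in a common $(U_i)^\sett{1}\times_{(U_i\times\K)}(U_i)^\sett{1}$, which is in general a proper subset of $M^\sett{1}\times_{(M\times\K)}M^\sett{1}$; one obtains only a \emph{local} groupoid. This is exactly where the Hausdorff hypothesis enters, via Lemma \ref{la:handy}: a Hausdorff manifold modelled on a topological group is \emph{handy}, i.e.\ any finite set of points lies in a common chart domain, so one can choose a chart containing $x,u,y$, define $a'\ast a$ there, and check chart-independence. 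Your proof never invokes the Hausdorff assumption or handiness at all.

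The same omission affects item (4): gluing the chart-level isomorphisms $\Phi_t:(U_i)_t\to U_i\times U_i$ only identifies $M_t$ (for $t\in\K^\times$) with $\bigcup_i U_i\times U_i$, a neighborhood of the diagonal in $M\times M$; to obtain the full pair groupoid $M\times M$ you again need handiness, since the element of $M_t$ joining $x$ to $y$ only exists once there is a chart containing both. (The paper's footnote makes precisely this point: without the Hausdorff assumption one only gets a local groupoid on a neighborhood of the diagonal.) The fix is short --- invoke Lemma \ref{la:handy} at the point where you define $a'\ast a$ and where you prove surjectivity of $M_t\to M\times M$ --- but it is an essential ingredient of the proof, not a routine verification subsumed by the cocycle condition.
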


\begin{proof}
(1) If $f$ is of class $C^1$, then it admits a continuous difference factorizer $f^{[1]}$.
As said in Section \ref{sec:Diff}, such a difference factorizer satisfies (\ref{eqn:a}) and (\ref{eqn:b}), and hence, by
Theorem \ref{th:morph1}, it  induces a morphism of groupoids $f^\sett{1}$.

\ssk
(2) 
Let us first describe the construction of the set $M^\sett{1}$: via Theorem \ref{th:reconstruct}, $M$ is described by ``local
data'' $(V_{ij},\phi_{ij})_{(i,j)\in I^2}$. By definition of a $C^1$-manifold, the transition maps $\phi_{ij}$ are $C^1$, hence we get
local data $((V_{ij})^\sett{1}, (\phi_{ij})^\sett{1})_{(i,j)\in I^2}$.
 Again by Theorem \ref{th:reconstruct}, such data define a manifold which we
denote by $M^\sett{1}$. By the same theorem, the natural projections $(V_{ij})^\sett{1} \to V_{ij} \times \K$ define projections
$\pi_\sigma: M^\sett{1} \to M \times \K$. In the same way we get the unit section $M\times \K \to M^\sett{1}$.
The products $\ast_i$ defined for each $(U_i)^\sett{1}$ are  compatible with transition maps, and hence coincide over intersections
$U_{ij} = U_i \cap U_j$. We have to show that these groupoid structures fit together  
and define a groupoid law $\ast : M^\sett{1} \times_{M\times \K} M^\sett{1} \to M^\sett{1}$.
To this end, consider $a,b \in M^\sett{1}$ such that $u:=\pi_1(a)=\pi_0(b)$.
Let $x:= \pi_0(a)$ and $y:=\pi_1(b) \in M$.
By Lemma \ref{la:handy}, we can find a chart domain $U \subset M$ containing $u,x$ and $y$. Now define
$a \ast b \in U^\sett{1}$ with respect to this chart.
As explained above, this does not depend on the chart, and we are done.

\ssk
(3) -- (6):
This now follows from (2) and  the corresponding local 
statements in Theorem \ref{th:FirstGroupoid} and  in Lemma \ref{la:isos}.
Note that the argument from (2), using Lemma \ref{la:handy} and the Hausdorff property,\footnote{ If we drop the 
Hausdorff assumption, then the same arguments show that $M_1$ is an open neighborhood of the diagonal
in $M \times M$; this open neighborhood will in general not be a groupoid, but a {\em local groupoid}, in the sense of \cite{Ko97}.}
shows that $M_1 \cong M \times M$ (pair groupoid).
For the proof of (6), the local version is, 
for chart domains $U\subset V$ and $S \subset W$, 
\begin{align*}
(U \times S)^\sett{1} & =
\{ (x,u;y,v;t) \in 
 (U \times S)  \times (V \times W) \times \K \mid 
 (x,y) + t (u,v) \in U \times S \}
 \cr 
 &=
 \{ (x,u;y,v;t) \mid (x,u,t) \in U^{[1]}, (y,v,t) \in S^{[1]} \} = U^{[1]} \times_\K S^{[1]} ,
\end{align*}
and this naturally carries over to the level of manifolds.
\end{proof}

\section{Scalar action, and the double category}\label{sec:Scalar}

%try to work as long as possible with possibly non-commutative $\K$ !

\subsection{The scaling morphisms}
We have seen that the first order difference groupoid $U^\sett{1}$ takes account of the additive aspect (\ref{eqn:b})
of tangent maps. 
Now let us deal with multiplicative aspects, i.e., with the homogeniety condition 

\ssk \nin
(\ref{eqn:c})
$\qquad \qquad \qquad \qquad \qquad  f^{[1]} (x,vs,t) =  f^{[1]}(x,v,st) \cdot s$.
%For invertible $s$, this can be rewritten 
%$ f^{[1]}(x,v,t) s=f^{[1]} (x,vs,s\inv t)$, and it holds for every difference factorizer. 

\begin{theorem}[The scaling morphisms]\label{th:scal}
Fix a couple of scalars $(s,t)  \in \K^2$.
Then there is a morphism of groupoids  $U_{st} \to U_t$,  given by
\begin{align*}
\phi_{s,t} : U_{st} \to U_t, & \, \,
(x,v;st) \mapsto (x,vs; t) , \qquad
\mbox{ with base map }  \id_U.
\end{align*} 
A groupoid morphism of the type $f^\sett{1}:U^\sett{1}\to W^\sett{1}$ commutes with
all morphisms of the type $\phi_{s,t}$ if, and only if, its difference factorizer 
satisfies relation (\ref{eqn:c}). 
If $\K$ is a topological ring and $M$ a Hausdorff $C^1_\K$-manifold, then the morphisms $\phi_{s,t}$
carry over to globally defined continuous groupoid morphisms   $M_{st}\to M_t$.
\end{theorem}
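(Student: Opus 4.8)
The statement has three parts, and the plan is to treat them in the order given, since each is a natural consequence of the groupoid structure already established in Theorems~\ref{th:FirstGroupoid} and~\ref{th:morph1}.

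First I would verify that $\phi_{s,t}$ is well-defined and a groupoid morphism. For well-definedness: if $(x,v;st)\in U_{st}$, then $x\in U$ and $x+v(st)\in U$, i.e.\ $x+(vs)t\in U$, so $(x,vs;t)\in U_t$; thus $\phi_{s,t}$ lands in $U_t$ as claimed. That it commutes with source and target over the base map $\id_U$ is immediate: $\pi_0$ of both $(x,v;st)$ and $(x,vs;t)$ is $x$, and for the target, $x+v(st)=x+(vs)t$, so $\pi_1$ is preserved as well. For compatibility with $\ast$: given composable $(x+v(st),v';st)$ and $(x,v;st)$ in $U_{st}$, their product is $(x,v'+v;st)$, which maps to $(x,(v'+v)s;t)=(x,v's+vs;t)$; on the other hand the images $(x+(vs)t,v's;t)$ and $(x,vs;t)$ are composable in $U_t$ (note $x+(vs)t=x+v(st)$) with product $(x,v's+vs;t)$. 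These agree, and $\phi_{s,t}(x,0;st)=(x,0;t)$ sends units to units, so $\phi_{s,t}$ is a groupoid morphism.

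Second, I would establish the equivalence between commuting with all $\phi_{s,t}$ and condition~(\ref{eqn:c}). Recall from Theorem~\ref{th:morph1} that a groupoid morphism $f^\sett{1}:U^\sett{1}\to W^\sett{1}$ over $f$ has the form $(x,v;t)\mapsto(f(x),f^{[1]}(x,v,t);t)$ for a difference factorizer $f^{[1]}$ satisfying~(\ref{eqn:b}). Restricting to the fiber over a fixed scalar, $f^\sett{1}$ sends $U_{st}\to W_{st}$ and $U_t\to W_t$. The square formed by $\phi_{s,t}^U:U_{st}\to U_t$, $\phi_{s,t}^W:W_{st}\to W_t$ and the two restrictions of $f^\sett{1}$ commutes iff, for all $(x,v;st)\in U_{st}$,
\[
f^\sett{1}\bigl(\phi_{s,t}^U(x,v;st)\bigr)=\phi_{s,t}^W\bigl(f^\sett{1}(x,v;st)\bigr),
\]
i.e.\ $\bigl(f(x),f^{[1]}(x,vs,t);t\bigr)=\bigl(f(x),f^{[1]}(x,v,st)\,s;t\bigr)$, which is exactly relation~(\ref{eqn:c}). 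So commuting with every $\phi_{s,t}$ is equivalent to~(\ref{eqn:c}) holding for $f^{[1]}$.

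Third, the topological/manifold statement is handled exactly as in the proof of Theorem~\ref{th:Manifold-one}(2): the local morphisms $\phi_{s,t}$ on chart domains $U^\sett{1}$ are continuous (being linear in the fiber variable $v$), and they are compatible with the transition maps $\phi_{ij}^\sett{1}$ because those are themselves groupoid morphisms satisfying~(\ref{eqn:c}) — any $C^1_\K$ difference factorizer satisfies~(\ref{eqn:c}) by the density-and-continuity argument of Section~\ref{sec:Diff}. Hence the $\phi_{s,t}$ glue, via Theorem~\ref{th:reconstruct}, to a globally defined continuous groupoid morphism $M_{st}\to M_t$; the Hausdorff hypothesis is used, as in Theorem~\ref{th:Manifold-one}, only to ensure the relevant chart-domain lemma applies. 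The main thing to be careful about is bookkeeping with the twist between the scalar $st$ upstairs and $t$ downstairs — keeping track of which fiber each object lives in — but there is no real obstacle: every identity needed reduces to the associativity/commutativity of $+$ in $V$ together with the identity $v(st)=(vs)t$ in the right $\K$-module $V$, which holds even without commutativity of $\K$.
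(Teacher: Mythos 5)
Your proposal is correct and follows essentially the same route as the paper: the same direct verification that $\phi_{s,t}$ commutes with $\pi_0$, $\pi_1$ (via $x+v(st)=x+(vs)t$), preserves units, and respects $\ast$ by distributivity, and the same two-line computation of $f^\sett{1}\circ\phi_{s,t}$ versus $\phi_{s,t}\circ f^\sett{1}$ identifying the commutation condition with relation~(\ref{eqn:c}). Your third paragraph spells out the gluing argument for the manifold case in more detail than the paper (which merely asserts that ``the last claims follow''), but it is the intended argument via compatibility with transition maps and Theorem~\ref{th:reconstruct}.
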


\begin{proof}
Clearly, we have $\phi_{s,t}\circ \pi_0 = \pi_0 \circ \phi_{s,t}$, and the condition
$\phi_{s,t}\circ \pi_1 = \pi_1 \circ \phi_{s,t}$
holds since
$x + v (st) = x + (vs)t$.
The condition  $\phi_{s,t} (a' \ast  a) = \phi_{s,t}(a') \ast \phi_{s,t}(a)$ is equivalent to
$$
(v' + v) st = v' st + vst,
$$
that is, to distributivity of the $\K$-action on $V$. 
Finally, $\phi_{s,t}(x,0,st) = (x,0,t)$, so units are preseved.
Thus  $\phi_{s,t}$ is a morphism.
Given $f:U \to W$, we compute
\begin{align*}
f^\sett{1} \circ \phi_{s,t} (x,v;st) &= (f(x), f^{[1]} (x,vs;t),t) , \cr
\phi_{s,t} \circ f^\sett{1} (x,v;st) &= (f(x), f^{[1]} (x,v;st) s, t),
\end{align*}
and the last claims follow. 
(Note that $\K$ need not be commutative for all this.) 
\end{proof}

\begin{remark}
When $s$ is invertible, we get the {\em scaling automorphism} of $M^\sett{1}$, given by
$(x,v;t) \mapsto (x,vs;s\inv t)$ (see Example \ref{ex:scaling}). For $s=-1$, we get the important automorphism 
$(x,v;t) \mapsto (x,-v;-t)$.
\end{remark}

We interprete $\phi_{s,t}$ as a {\em scaling morphism}, where $s$ is the scalar acting; morphisms
with different scaling level $t$ on target spaces have to be distinguished. 
It is quite remarkable that such structure fits together with the groupoid structure from the preceding
chapter into a {\em double category}, which we will define next.

\subsection{The first order double category}
We will define a (small) double category
\begin{equation}\label{eqn:diagram}
\begin{matrix}
C_{11}  & \rightrightarrows & C_{01}  \cr
 \downdownarrows & & \downdownarrows   
 \cr
C_{10} & \rightrightarrows & C_{00} 
 \end{matrix} :
 \qquad \qquad 
\begin{matrix}
U^\settt{1}   &  \stackrel \pi \rightrightarrows & U \times \K \times \K  &  \cr
\partial   \downdownarrows \phantom{\pi}  & & \downdownarrows   \partial
&  \cr
U^\sett{1}    &\stackrel \pi  \rightrightarrows & U \times \K 
 \end{matrix}
 \end{equation}
(see Appendix \ref{App:Doublecat} for definitions), as follows: 
for a linear set $(U,V)$, define its {\em first double prolongation} by
\begin{align}
U^\settt{1} & := \{ (x,v;s,t) \in V^2 \times \K^2  \mid \, x \in U, \, x +  v st \in U \}  \cr
& = \{ (x,v;s,t) \in V^2 \times \K^2  \mid \, (x,v;st) \in U^\sett{1} \} .
\end{align}
%To memorize notation, let us call $x$ the {\em footpoint}, $v$ the {\em (spatial) fiber variable},
%$s$ the {\em scalar action variable} and $t$ the {\em target scaling level}, of $(x,v;s,t)$.
This comes
ready-made with  the following projections (the last two have already been defined):
\begin{align*}
\partial_0 : U^\settt{1}  \to  U^\sett{1} , &  \quad  (x,v; s,t) \mapsto (x,v;st)
\cr
\partial_1 : U^\settt{1}  \to U^\sett{1}, &  \quad  (x,v; s,t) \mapsto (x , vs ;t)
\cr
\partial_0 : U \times \K^2  \to U \times \K, &   \quad  (x; s, t) \mapsto (x;st)
\cr
\partial_1 : U \times \K^2  \to U \times \K, &   \quad  (x; s,t) \mapsto (x ;t)
\cr
\pi_0 : U^\settt{1}  \to U \times \K^2, &  \quad  (x,v; s,t) \mapsto (x;s,t)
\cr
\pi_1 : U^\settt{1}  \to U \times \K^2, &  \quad  (x,v; s,t) \mapsto (x + vst ;s,t)
\cr
\pi_0 :  U^\sett{1}  \to U \times \K, &   \quad  (x,v; t) \mapsto (x;t)
\cr
\pi_1 : U^\sett{1}  \to U \times \K, &   \quad  (x,v; t) \mapsto (x + vt ;t)
\end{align*}
% {\bf Remark: are there choices involved??} --> super-calculus!

\begin{lemma}
For $i , j \in \{ 0,1 \}$:
$\boxed{
\partial_i \circ \pi_j = \pi_j \circ \partial_i : U^\settt{1} \to U \times \K
}$
\end{lemma}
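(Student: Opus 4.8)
The statement to prove is that, for all $i,j \in \{0,1\}$, the four compositions $\partial_i \circ \pi_j$ and $\pi_j \circ \partial_i$ agree as maps $U^\settt{1} \to U \times \K$. Since each of $\partial_i$, $\pi_j$ (in both their versions on $U^\settt{1}$, $U^\sett{1}$, and the base sets) is defined by an explicit elementary formula, the proof is just a matter of composing these formulae and comparing, taking care to use the correct version of each projection at each stage. The plan is to fix $(x,v;s,t) \in U^\settt{1}$ and check each of the four cases $(i,j) \in \{0,1\}^2$ directly.

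\textbf{Key steps.} I would start from the top-left vertex $C_{11} = U^\settt{1}$ and trace both paths around the square to the base $C_{00} = U \times \K$. First, for $j = 0$: applying $\pi_0 : U^\settt{1} \to U \times \K^2$ gives $(x;s,t)$, then applying $\partial_i : U \times \K^2 \to U \times \K$ gives $(x;st)$ if $i=0$ and $(x;t)$ if $i=1$. Going the other way, $\partial_0 : U^\settt{1} \to U^\sett{1}$ gives $(x,v;st)$ and then $\pi_0 : U^\sett{1} \to U \times \K$ gives $(x;st)$, while $\partial_1$ gives $(x,vs;t)$ and then $\pi_0$ gives $(x;t)$ — matching in both subcases. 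Second, for $j=1$: $\pi_1 : U^\settt{1} \to U \times \K^2$ gives $(x+vst;s,t)$, then $\partial_0$ gives $(x+vst;st)$ and $\partial_1$ gives $(x+vst;t)$. The other way: $\partial_0$ gives $(x,v;st) \in U^\sett{1}$, then $\pi_1 : U^\sett{1}\to U\times\K$ gives $(x + v(st);st) = (x+vst;st)$; and $\partial_1$ gives $(x,vs;t)$, then $\pi_1$ gives $(x + (vs)t;t) = (x+vst;t)$. All four diagrams commute, the only nontrivial identity used being the associativity $v(st) = (vs)t$ of the $\K$-action on $V$ (equivalently, $vst$ is unambiguous), which is exactly what makes the $j=1$ cases work.

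\textbf{Main obstacle.} There is no real obstacle here: the lemma is a routine verification. The one point requiring a little care is purely bookkeeping — keeping straight which of the several maps named $\partial_i$ or $\pi_j$ is meant at each step, since the same symbols denote the maps on $U^\settt{1}$, on $U^\sett{1}$, on $U\times\K^2$, and on $U\times\K$. I would therefore present the computation as a short explicit chase of $(x,v;s,t)$ through the square, one line per case, making the source and target of each arrow unambiguous, and noting that the underlying algebraic fact is just associativity of scalar multiplication in the right $\K$-module $V$. This also implicitly re-confirms that the images land in $U \times \K$ (rather than merely $V \times \K$), since $x \in U$ and $x + vst \in U$ by the defining condition of $U^\settt{1}$.
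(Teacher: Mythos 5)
Your proof is correct and is essentially identical to the paper's: a direct four-case computation chasing $(x,v;s,t)$ both ways around the square, with the only substantive input being the associativity $v(st)=(vs)t$ of the scalar action. Your extra remarks about bookkeeping and about the images landing in $U\times\K$ are sound but not needed beyond what the paper records.
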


\begin{proof} 
By direct computation,
\begin{align*}
\partial_1 \pi_0 (x,v;s,t) & = (x,t) = \pi_0 \partial_1 (x,v;s,t),
\cr
\partial_0 \pi_0(x,v;s,t) & = (x,st) = \pi_0 \partial_0 (x,v;s,t)
\cr
\partial_0 \pi_1 (x,v;s,t) & = (x+vst,st) = \pi_1 \partial_0 (x,v;s,t)
\cr
\partial_1 \pi_1 (x,v;s,t) & = (x+vst,t) = \pi_1 \partial_1 (x,v;s,t)
\end{align*}
% [so this still leaves possible to exchange $0$ an $1$ separately for $\pi$ or $\partial$... !!
\end{proof}

\nin Next  define ``unit (resp.\ zero) sections''  
\begin{align*}
z_\pi :U \times \K \to U^\sett{1}, & \quad (x;t) \mapsto (x,0;t) 
\cr
z_\partial:U \times \K \to U \times \K^2, & \quad (x;t) \mapsto (x;1,t) 
\cr
z_\partial : U^\sett{1}  \to  U^\settt{1}, & \quad (x,v;t) \mapsto (x,v;1,t) 
\cr
z_\pi: U \times \K^2 \to  U^\settt{1}, & \quad (x;s,t) \mapsto (x,0;s,t) 
\end{align*}
It is immediately checked that
$
\boxed{z_\pi  \circ z_\partial = z_\partial \circ z_\pi:U \times \K \to U^\settt{1} : \, (x,t) \mapsto (x,0;1,t) }$:
$$
\begin{matrix}
U^\settt{1}   &  \leftarrow & U \times \K^2   \cr
  \uparrow \phantom{\pi}  & & \uparrow  
 \cr
U^\sett{1}    & \leftarrow & U \times \K 
 \end{matrix}
$$
%The following are the properties (1.2) and (1.3) from Brown-Spencer. There one writes $1_{(x,t)}:= z_V (z_0 (x,t)$. 

\begin{lemma}
The maps $z$ are bisections of the projections defined above, that is, for $i=0,1$, 
$\boxed{\partial_i \circ z_\partial = \id, \, \pi_i \circ z_\pi = \id}$.
\end{lemma}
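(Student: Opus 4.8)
The plan is a direct verification: I would unwind the explicit formulas for the four section maps $z$ and the eight projections $\partial_i$, $\pi_i$ listed just above the statement, and check the two asserted identities on each of the two levels on which the maps live. No conceptual input is needed, since every map in sight is given by an explicit (polynomial) formula in the coordinates $(x,v;s,t)$; the only point to keep in mind is that each composite must be well-typed, i.e.\ the source of the projection used must match the target of the section it is composed with.

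First I would treat the two incarnations of $z_\partial$. For $z_\partial:U\times\K\to U\times\K^2$, $(x;t)\mapsto(x;1,t)$, one gets $\partial_0\circ z_\partial(x;t)=\partial_0(x;1,t)=(x;1\cdot t)=(x;t)$ and $\partial_1\circ z_\partial(x;t)=\partial_1(x;1,t)=(x;t)$, using only the unit axiom $1\cdot t=t$ of $\K$. For $z_\partial:U^\sett{1}\to U^\settt{1}$, $(x,v;t)\mapsto(x,v;1,t)$, the same computation carrying along the extra coordinate $v$ gives $\partial_0\circ z_\partial(x,v;t)=(x,v;1\cdot t)=(x,v;t)$ and $\partial_1\circ z_\partial(x,v;t)=(x,v\cdot 1;t)=(x,v;t)$, now also invoking $v\cdot 1=v$ in $V$. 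Next I would do the two incarnations of $z_\pi$. For $z_\pi:U\times\K\to U^\sett{1}$, $(x;t)\mapsto(x,0;t)$: $\pi_0\circ z_\pi(x;t)=(x;t)$ and $\pi_1\circ z_\pi(x;t)=(x+0\cdot t;t)=(x;t)$. For $z_\pi:U\times\K^2\to U^\settt{1}$, $(x;s,t)\mapsto(x,0;s,t)$: $\pi_0\circ z_\pi(x;s,t)=(x;s,t)$ and $\pi_1\circ z_\pi(x;s,t)=(x+0\cdot st;s,t)=(x;s,t)$. Here the only facts used are $0\cdot t=0$, $0\cdot st=0$ and $x+0=x$.

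I do not expect any genuine obstacle; the only thing requiring care is bookkeeping, since $\partial_i$, $\pi_i$ and $z_\partial$, $z_\pi$ are each overloaded across two levels. One must pair $\partial_i$ on $U^\settt{1}$ (resp.\ on $U\times\K^2$) with the $z_\partial$ landing in that space, and likewise $\pi_i$ with the matching $z_\pi$; once the typing is pinned down, each identity is immediate from the definitions. It is worth noting that these four pairs of identities are precisely the assertion that the sections $z_\partial$ and $z_\pi$ are the unit sections of the two groupoid-type structures carried by the vertical and horizontal edges of the square (\ref{eqn:diagram}), so nothing beyond the explicit formulas is needed.
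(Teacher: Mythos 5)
Your proof is correct and is exactly the paper's argument, merely written out in full: the paper's proof reads ``Immediate, since $0$ appears in the definition of $z_\pi$ and $1$ in the one of $z_\partial$,'' which is precisely the computation you carry out case by case. Your careful attention to the overloading of $\partial_i$, $\pi_i$, $z_\partial$, $z_\pi$ across the two levels is the only point of substance, and you handle it correctly.
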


\begin{proof}
Immediate, since $0$ appears in the definition of $z_\pi$ and $1$ in the one of $z_\partial$.
\end{proof}

\begin{lemma} \label{la:a}   For $i=0,1$, we have
$\boxed{ \partial_i \circ z_\pi  = z_\pi \circ \partial_i: U \times \K^2  \to U^\sett{1} }$.
\end{lemma}

\begin{proof} For $i=0$,
$
z_\pi \partial_0(x;s,t) = z_\pi(x;st) = (x,0;st) = 
\partial_0(x,0;s,t) = \partial_0 z_\pi(x;s,t) 
$
and similarly for $i=1$.
\end{proof}

\begin{lemma} For $i=0,1$, we have  $\boxed{
\pi_i \circ z_\partial  = z_\partial  \circ \pi_i : U^\sett{1} \to U \times \K^2 }$
\end{lemma}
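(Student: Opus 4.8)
The plan is to verify the claimed identity $\pi_i \circ z_\partial = z_\partial \circ \pi_i : U^\sett{1} \to U \times \K^2$ for $i=0,1$ by direct computation, exactly in the style of the preceding lemmas. The point is simply to chase an element $(x,v;t) \in U^\sett{1}$ through both composites and observe they agree, using the explicit formulas already recorded for $z_\partial$ (in its two incarnations $U \times \K \to U \times \K^2$ and $U^\sett{1} \to U^\settt{1}$), and for $\pi_0,\pi_1$ (again in two incarnations, on $U^\sett{1}$ and on $U^\settt{1}$).

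First I would spell out the two sides for $i=0$. Applying $z_\partial : U^\sett{1} \to U^\settt{1}$ to $(x,v;t)$ gives $(x,v;1,t)$, and then $\pi_0 : U^\settt{1} \to U \times \K^2$ sends this to $(x;1,t)$. On the other route, $\pi_0 : U^\sett{1} \to U \times \K$ sends $(x,v;t)$ to $(x;t)$, and then $z_\partial : U \times \K \to U \times \K^2$ sends $(x;t)$ to $(x;1,t)$. The two results coincide, so $\pi_0 \circ z_\partial = z_\partial \circ \pi_0$.

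Next I would do the $i=1$ case the same way. Here $z_\partial(x,v;t) = (x,v;1,t)$, and $\pi_1 : U^\settt{1} \to U \times \K^2$ sends this to $(x + v\cdot 1\cdot t;1,t) = (x+vt;1,t)$. On the other side, $\pi_1 : U^\sett{1} \to U \times \K$ sends $(x,v;t)$ to $(x+vt;t)$, and $z_\partial$ then sends this to $(x+vt;1,t)$. Again the two agree, establishing $\pi_1 \circ z_\partial = z_\partial \circ \pi_1$. There is no real obstacle here: the only thing to be mildly careful about is tracking which of the two overloaded $z_\partial$'s (and which of the two overloaded $\pi_i$'s) is meant at each stage of the composite, since the same symbol denotes maps with different source and target; once the types are pinned down, the verification reduces to the trivial observations that $s=1$ makes $vst = vt$ and that inserting the second coordinate $1$ commutes with all the first-order operations. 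The whole argument can be presented as a four-line display of equalities, as in the proof of the lemma immediately above.
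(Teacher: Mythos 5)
Your verification is correct and is exactly the paper's own argument: a direct element chase through the two composites, using $z_\partial(x,v;t)=(x,v;1,t)$ and the fact that $s=1$ gives $x+v\cdot 1\cdot t = x+vt$. The paper writes out only the $i=1$ case and says "similarly for $i=0$"; you have simply done both explicitly.
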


\begin{proof}
$
z_\partial \pi_1(x,v;s) =z_\partial(x+vs;s) =  (x+vs;1,s) = \pi_1( x,v;1,s) =  \pi_1 z_\partial (x,v;s)
$
for $i=1$. Similarly for $i=0$.
\end{proof}

\nin Now we define composition of morphisms. In the following formulae, we assume that
$a = (x,v;s,t), a'= (x',v';s',t') \in U^\settt{1}$. 
The two compositions $\ast$  are ``additive'' and the two compositions $\bullet$  are ``multiplicative'':

\begin{enumerate}
\item
if $\pi_1(a) =\pi_0(a')$ (so $s'=s, t'=t, x' = x+ vst$) , we define
$a' \ast a  \in U^\settt{1}$:
$$
\boxed{ a' \ast a  = (x',v';s,t) \ast (x,v;s,t)   = (x + vst ,v';s,t) \ast (x,v;s,t) 
 = (x , v+v' ; s,t) } .
$$
\item
For $(x,v;t), (x',v',t') \in U^\sett{1}$  such that $\pi_1(x,v,t) = \pi_0(x',v',t')$, (so $t'=t$, $x'=x+vt$), as
in the preceding section,
$$
\boxed{  (x' ,v';t) \ast (x,v;t)  = (x , v+v' ; t) } .
$$
\item
If  $\partial_1(a) = \partial_0(a')$ (so $x=x'$, $ v' = vs$ and $t=s't'$), 
then define $a' \bullet a$ 
$$
\boxed{ a' \bullet a  =
(x,v' ;s',t') \bullet (x, v;s ,t ) 
= (x,vs,s',t') \bullet (x,v,s,s't') =
 (x,v; ss' , t' )
} .
$$
\item
If $\partial_1 (x;s,t)=\partial_0(x'; s', t')$ (so $x'=x$ and $t=s't'$), let  
$$
\boxed{ 
(x ; s', t') \bullet (x; s , t ) = (x ; ss' ,t' )
} . 
$$
\end{enumerate}

%{\bf Note: $\bullet$ and is opposite product should have equal rights... 
%see Smooth2.tex : then have to change $ss'$ into $s's$.}

\begin{theorem}[First order double category]\label{th:doublecat} $ $

\begin{enumerate}
\item
The data $(U^\settt{1}, U^\sett{1}, U \times \K^2,U \times \K, \pi , \partial , z , \ast , \bullet)$ define a
double category which  we denote by $\bfU^\settt{1}$ and indicate by a diagram of the form (\ref{eqn:diagram}).
\item
Morphisms of double categories
 $\bff: \bfU^\settt{1} \to \bfW^\settt{1}$ which are trivial on $\K$
are in 1:1-correspondence with maps $f:U\to W$ together with a difference  factorizer $f^{[1]}$ satisfying
(\ref{eqn:b}) and (\ref{eqn:c}).
\item
The unique map $U \to 0$ induces  a canonical morphism of the $\bullet$-category to
the left action category of $(\K,\cdot)$:
 $$
\begin{matrix}
U^\settt{1}   &  \stackrel \pi \rightrightarrows & U \times \K \times \K  &  \to & \K \times \K \cr
{ }_\partial   \downdownarrows \phantom{\pi}  & & \downdownarrows   { }_\partial
&  & \downdownarrows  \cr
U^\sett{1}    &\stackrel \pi  \rightrightarrows & U \times \K & \to & \K 
 \end{matrix}
$$
\item
The maps $j(x,v;s,t):=(x+vst,-v;t,s)$, resp.\ $j(x,v;t):=(x+vt,-v;t)$, $j(x;s,t)=(x,s,t)$, and $j(x;t)=(x;t)$, define
an isomorphism of double categories
$(\ast,\bullet) \to (\ast^{op},\bullet)$.
\item
The inverse image of the left action category $((\K^\times)^2,\K^\times)$ under the projections from Item (3)
forms a double groupoid, denoted by $(\bfU^\settt{1})^\times$, which is isomorphic to the double groupoid
given by the direct product of categories
$U \times U$ (pair groupoid) and $\K^\times \times \K^\times$ (pair groupoid).
\item
If $\K$ is a topological ring and $M$ a Hausdorff $C^1$-manifold, then there is a continuous
double category $M^\settt{1}$ over $M \times \K$, and statements analoguous to those of Theorem
\ref{th:Manifold-one}  hold. 
Continuous morphisms $M^\settt{1} \to N^\settt{1}$ which are trivial on $\K$ are in bijection with maps
$f:M \to N$ of class $C^1_\K$. 
\end{enumerate}
\end{theorem}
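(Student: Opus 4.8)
The plan is to verify the six items of Theorem~\ref{th:doublecat} essentially by reducing everything to explicit coordinate computations and to results already established in Sections~\ref{sec:First} and~\ref{sec:Scalar}, exactly as in the proofs of Theorem~\ref{th:FirstGroupoid} and Theorem~\ref{th:scal}. For Item~(1), I would appeal to the axioms of a (small) double category as listed in Appendix~\ref{App:Doublecat}. These split into: (a) the four corners are sets with the indicated source/target/unit data, which we already set up; (b) $(\ast)$ makes the horizontal pair $(U^\settt{1}\rightrightarrows U\times\K^2)$ and $(U^\sett{1}\rightrightarrows U\times\K)$ into categories (in fact groupoids); (c) $(\bullet)$ makes the vertical pairs into categories; and (d) the interchange law relating the two composition laws on the set of ``squares'' $U^\settt{1}$. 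Claim (b) for the top row is immediate from the formula $a'\ast a=(x,v+v';s,t)$: associativity, units $z_\pi(x;s,t)=(x,0;s,t)$ and inverses $(x+vst,-v;s,t)$ all reduce to the group structure of $(V,+)$, exactly as in Theorem~\ref{th:FirstGroupoid} with $st$ in place of $t$; for the bottom row it \emph{is} Theorem~\ref{th:FirstGroupoid}. Claim (c): the formula $a'\bullet a=(x,v;ss',t')$ (with the compatibility $v'=vs$, $t=s't'$ built in) has associativity coming from associativity of multiplication in $\K$, unit $z_\partial(x,v;t)=(x,v;1,t)$, and is only a \emph{category} (not groupoid) law because $s$ need not be invertible --- note $\partial_1(a'\bullet a)=\partial_1(a')$ and $\partial_0(a'\bullet a)=\partial_0(a)$ come from the already-checked lemmas. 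The source/target compatibility lemmas proved just above the theorem ($\partial_i\circ\pi_j=\pi_j\circ\partial_i$, the $z$-compatibilities) are precisely the axioms saying $\pi$ and $\partial$ are \emph{functorial} with respect to the other direction's structure; so what genuinely remains is the interchange law, which I discuss below.

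For Item~(2), I would combine Theorem~\ref{th:morph1} and Theorem~\ref{th:scal}. A double-category morphism trivial on $\K$ is in particular a morphism of the $\ast$-groupoids $U^\sett{1}\to W^\sett{1}$ over $f$, which by Theorem~\ref{th:morph1} corresponds to a difference factorizer satisfying~(\ref{eqn:b}); compatibility with the $\bullet$-law is, by the computation in the proof of Theorem~\ref{th:scal} (applied to the squares, i.e.\ using $\partial_0,\partial_1$), equivalent to~(\ref{eqn:c}). One checks that conversely a pair $(f,f^{[1]})$ satisfying~(\ref{eqn:b}) and~(\ref{eqn:c}) yields a well-defined map on all four corners ($f^\settt{1}(x,v;s,t):=(f(x),f^{[1]}(x,vs;t);s,t)$ on squares --- note well-definedness uses precisely that $f^{[1]}(x,vs;t)$ can be recovered equivalently as $f^{[1]}(x,v;st)s$ by~(\ref{eqn:c})) that is a double functor. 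Item~(3) is immediate: the projection $U\times\K^2\to\K^2$, $U\times\K\to\K$ sends the $\bullet$-law to $(s',t')\bullet(s,t)=(ss',t')$ with $t=s't'$, which is by definition the left action category of $(\K,\cdot)$; functoriality is trivial. Item~(4) is a direct verification that the stated $j$ on each of the four corners intertwines $\pi_i$ and $\partial_i$ appropriately, reverses $\ast$, and preserves $\bullet$ --- this is the square-level analogue of the ``$(x+vt,-v,t)$ is the inverse'' computation in Theorem~\ref{th:FirstGroupoid} together with the scaling remark $(x,v;s,t)\mapsto(x,-v;-t,\dots)$-type sign bookkeeping; I would just tabulate $j\circ\pi_i$, $j\circ\partial_i$, $j(a'\ast a)$, $j(a'\bullet a)$ and read off the claim.

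For Item~(5), pulling back along the projections of Item~(3) to $((\K^\times)^2,\K^\times)$ restricts squares to $(U^\settt{1})^\times=\{(x,v;s,t):s,t\in\K^\times\}$; since then $st\in\K^\times$ too, Lemma~\ref{la:isos}(3) already gives $(U^\sett{1})^\times\cong U\times U\times\K^\times$ as groupoids, and I would extend that isomorphism to squares by $(x,v;s,t)\mapsto(x+vst,x;s,t)$ or a variant, checking it carries $\ast$ to the pair-groupoid composition on $U\times U$ and $\bullet$ to the pair-groupoid composition on $\K^\times\times\K^\times$ (here $s,t$ invertible makes $\bullet$ a groupoid law, with inverse given by Item~(4)). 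That both structures are now groupoids and the map is bijective with bijective inverse (as in Lemma~\ref{la:isos}) gives the asserted double-groupoid isomorphism to $(U\times U)\times(\K^\times\times\K^\times)$. Finally Item~(6) is the globalization: under the assumptions of topological differential calculus (Subsection~\ref{ssec:topdiff}), transition maps of $M$ are $C^1$, hence by Item~(2) and Theorem~\ref{th:Manifold-one} they induce double functors $(\phi_{ij})^\settt{1}$ of the local double categories, so by the gluing principle (Theorem~\ref{th:reconstruct}, Lemma~\ref{la:handy}, Appendix~\ref{App:A}) the local data $((V_{ij})^\settt{1},(\phi_{ij})^\settt{1})$ define a manifold $M^\settt{1}$ with all four projection bundles and both composition laws $\ast,\bullet$ defined chart-independently --- the argument is word-for-word that of Theorem~\ref{th:Manifold-one}(2), now carried out one level up, and the continuity and the $C^1\!\Leftrightarrow$``continuous double functor trivial on $\K$'' equivalence follow from density of $\K^\times$ together with Item~(2) and Theorem~\ref{th:Manifold-one}(5).

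The main obstacle is the \textbf{interchange (middle-four exchange) law} in Item~(1): one must show that for four squares $a,b,c,d\in U^\settt{1}$ with all the matching conditions, $(d\ast c)\bullet(b\ast a)=(d\bullet b)\ast(c\bullet a)$. I expect this to be the only computation with real content, because it is the genuine ``compatibility of $+$ and $\cdot$'' that, in the linear-algebra analogy of the introduction, corresponds to bilinearity of scalar multiplication. Concretely, writing the four squares with a common $x$-base and tracking the six scalars involved, the left side produces the $v$-coordinate $(v_a+v_c)s$ and the right side produces $v_a s + v_c s$, so the identity reduces to one instance of distributivity $(v+v')s=vs+v's$ in $V$ --- but the bookkeeping of which of $s,s',t,t'$ are equated by the three compatibility conditions $\pi_1=\pi_0$ (for $\ast$), $\partial_1=\partial_0$ (for $\bullet$) is fiddly and is the step where I would be most careful; everything else reduces to associativity/commutativity/distributivity in $(V,+)$ and associativity of $\cdot$ in $\K$, with commutativity of $\K$ deliberately \emph{not} invoked.
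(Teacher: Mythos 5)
Your strategy coincides with the paper's: verify the axioms of Theorem \ref{th:smalldoublecats} by coordinate computation, quoting Theorem \ref{th:FirstGroupoid} for the $\ast$-rows and the scaled action category (Lemma \ref{la:scaledaction}) for the $\bullet$-columns, using Theorems \ref{th:morph1} and \ref{th:scal} for Item (2), an explicit trivialization for Item (5), and the gluing argument of Theorem \ref{th:Manifold-one} for Item (6). One step, however, fails as written: in Item (2) your formula on squares, $f^\settt{1}(x,v;s,t)=(f(x),f^{[1]}(x,vs;t);s,t)$, is not compatible with $\partial_1$. Compatibility with $\partial_0$ forces the middle entry to be $f^{[1]}(x,v;st)$, and compatibility with $\partial_1$ is then precisely (\ref{eqn:c}), namely $f^{[1]}(x,v;st)\cdot s=f^{[1]}(x,vs;t)$; with your choice one would instead need $f^{[1]}(x,vs;t)\cdot s=f^{[1]}(x,vs;t)$, which is false in general. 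You already wrote down the identity relating the two candidate expressions, so the repair is simply to take the other one.

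Two smaller points. Your assertion that the lemmas preceding the theorem are ``precisely the axioms saying $\pi$ and $\partial$ are functorial'' overstates them: they give only the compatibilities among projections and unit sections (axioms (1)--(4)); axiom (7), that $\partial_\sigma$ preserves $\ast$ and $\pi_\sigma$ preserves $\bullet$, is a separate verification, and it is there (in $\partial_1(a'\ast a)=\partial_1(a')\ast\partial_1(a)$, i.e., $(v'+v)s=v's+vs$) that distributivity of the $\K$-action actually enters --- not in the interchange law itself, whose $v$-coordinate on both sides is just the sum of the $v$'s of the right-hand factors, so your ``$(v_a+v_c)s$ versus $v_as+v_cs$'' bookkeeping belongs to the matching conditions rather than to the composite. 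Finally, in Item (5) the ``or a variant'' hedge hides the actual content: the trivialization must send $(x,v;s,t)$ to $(x+vst,x;st,t)$ (with $\K$-part $(st,t)$, not $(s,t)$) for the $\bullet$-law to become the pair-groupoid composition on $\K^\times$.
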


\begin{proof}
(1) 
We check that properties (1) -- (9) from Theorem \ref{th:smalldoublecats}
hold.
We have already checked the compatibility conditions for target and source projections and for
the unit sections (Lemmas  above), and we have seen in the preceding section that
$(U^\sett{1},\ast)$ is a category.
Similar computations show  that $(U^\settt{1},\ast)$ is a category, too.
For any fixed $x$,  $(U^\settt{1},\partial,\bullet)$ corresponds to the {\em scaled action category} from
Lemma \ref{la:scaledaction}, Appendix \ref{App:ActionCat} (with $S$ the monoid $(\K,\cdot)$), and hence
we have $\bullet$-categories.
Let us show that projections $\pi,\partial$ are morphisms between the respective categories.
We write $a'=(x',v';s',t'), a=(x,v;s,t)$, and when writing compositions $a' \bullet a$ and
$a' \ast a$, it is understood that these compositions are defined.
\begin{eqnarray*}
\pi_0(a' \bullet a) &= &\pi_0 (x,v;ss',t' ) = (x;ss',t') = (x',s',t') \bullet (x,s,t) = \pi_0(a') \bullet \pi_0(a)
\cr
\pi_1(a' \bullet a) & = &\pi_1(x,v;ss',t') = (x + vss't' ;ss',t') \cr
& = &     (x + v' s't' ;s',t') \bullet (x+vst ;s,t) =   
\pi_1(a') \bullet \pi_1(a)
\cr
\partial_0(a' \ast a) & = & \partial_0 (x,v'+v;s,t) = (x,v'+v;st) =
%(x,v';st) \ast (x,v;st) =
\partial_0(a') \ast \partial_0(a)
\cr
\partial_1(a' \ast a) & = &  \partial_1 (x,v'+v;s,t) = (x,(v'+v)s;t) =
(x,v's + vs;s) = 
\partial_1(a') \ast \partial_1(a)
\end{eqnarray*}
In the last line we  used  distributivity in $V$.
Next, the bisections $z$ are  functors:
$
z_\partial(a' \ast a) = z_\partial(a') \ast z_\partial(a) ,
z_\pi (b' \bullet b) = z_\pi(b') \bullet z_\pi(b').
$
Indeed,
\begin{align*}
(x,v'+v;1,t) &= (x,v';1,t) \ast (x,v;1,t), \cr
(x,0; ss',t) &= (x,0;s',t') \bullet (x,0;s,t).
\end{align*}
Finally,
let us prove the {\em interchange law} (\ref{eqn:interchange}):  %(Property (1.5) from \cite{BrSp76}).
\begin{eqnarray*}
& & \bigl( (x',v';s',t') \ast (x,v;s,t) \bigr) \bullet 
\bigl(
(y',w';p',q') \ast (y,w;p,q) \bigr) 
\cr
&=& (x,v+v';s,t) \bullet (y,w+w';p',q') 
\cr
&=& (y,w+w';ps,t)
\cr
&= &(y',w'; p's', t' ) \ast (y,w;ps, t)  
\cr
&=&
\bigl( (x',v';s',t' ) \bullet (y',w';p',q') \bigr) \ast \bigl( (x,v;s,t) \bullet (y,w;p,q) \bigr) 
\end{eqnarray*}
This proves that $U^\settt{1}$ is a double category.

\ssk
(2) Let $\eff$ be a morphism, that is, a double functor from $\bfU$ to $\bfW$, and assume that
it is trivial on $\K$.
Denote by $f:U \to W$ the corresponding map on the base and by
$f^\sett{1}:U^\sett{1}\to W^\sett{1}$ and
$f^\settt{1}:U^\settt{1} \to W^\settt{1}$ the corresponding maps. 
As in the proof of Theorem \ref{th:morph1} and Lemma \ref{la:factorizer1},
we get $f^\sett{1}(x,v;t) = (f(x), f^{[1]}(x,v,t),t)$ with a difference factorizer $f^{[1]}$ satisfying
(\ref{eqn:a}). From compatiblity with $\pi$ we gt 
$$
f^\settt{1} (x,v;s,t) = \bigl(
f(x), F(x,v;s,t) ;s,t \bigr)
$$
with some map $F:U^\settt{1}\to W$. 
From $\partial_0 \circ f^\settt{1}\to f^\sett{1} \circ \partial_0$, it follows that
$F(x,v;s,t) = f^{[1]}(x,v,st)$, and from
$f^\sett{1} \circ \partial_1 = \partial_1 \circ f^\settt{1}$ we now get
$f^{[1]} (x,sv;t) = f^{[1]}(x,v;st) \, s$, that is (\ref{eqn:c}).
If these conditions hold, the property
$f^\settt{1} (a' \bullet a) = f^\settt{1}(a') \bullet f^\settt{1}(a)$
is proved without further assumptions.
Recall that (\ref{eqn:b}) corresponds to the property
$f^\sett{1} (a' \ast  a) = f^\sett{1}(a') \ast f^\sett{1}(a)$.
Finally, all computations can be reversed, so that a base map $f$ together with
a difference factorizer satisfying  (\ref{eqn:b}), (\ref{eqn:c})
defines a double functor $\bff$. 

\ssk
(3) 
This is proved by direct computation (cf.\ Lemma \ref{la:actioncat2}), or by using 
that the constant map $U \to 0$ induces a morphism (Lemma \ref{la:const}).

\ssk
(4) The map $j$ is the inversion map of the $\ast$-groupoids. The statement holds more generally for
double categories two of whose edges are groupoids; in the present case it can of course also be checked by
direct computations. 

\ssk
(5)
The trivialization map is
$$
\bfU^\times \to U^2 \times (\K^\times)^2, \quad
(x,v;s,t) \mapsto (y,x;u,t) := (x+vst,x;st,t)
$$
with inverse map 
$(y,x;u,t) \mapsto (x,v;s,t)= (x,(y-x)u\inv; ut\inv, t)$.

\ssk
(6) The same arguments as in the proof of Theorem \ref{th:Manifold-one} apply.
\end{proof}

% rk : Thus the image of $z_\pi$ is a subcategory for $\bullet$ (over each point, a copy of the category 
%$(\K^2,\K)$) and the image of $z_\partial$ a subcategory for $\ast$ (a copy of $M^\sett{1}$ lying
% in $M^\settt{1}$ for $s=1$). 

\begin{theorem}[General morphisms]
Assume given a map $f:U\to W$ and two maps $\varphi:\K\to \K$, $\psi:\K\to \K$.
Then a map of the form
$$
U^\settt{1}\to W^\settt{1}, \quad (x,v;s,t) \mapsto (f(x), G(x,v;s,t) ; \varphi(s) , \psi (t) )
$$
is a morphism of double categories if and only if, whenever defined:
\begin{enumerate}
\item
there is a map $F$ such that
$G(x,v;s,t)= F(x,v;st)$,
\item
$f(x+vst) = f(x) + F(x,v;st) \, \varphi(s) \psi(t)$,
\item
$\varphi(st) = \varphi(s) \psi(t)$,
\item
$F(x+vt,v',t) + F(x,v,t)= F(x,v'+v,t)$,
\item
$F(x,vs;t) = F(x,v;st) \varphi (s)$.
\end{enumerate}
\end{theorem}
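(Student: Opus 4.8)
The plan is to mimic the proof of Theorem \ref{th:doublecat}(2), which is the special case $\varphi=\psi=\id_\K$, and simply track the two twisting maps through each of the defining compatibility conditions for a double functor. As in that proof, the strategy is: first decode what compatibility with the \emph{projections} $\pi$ and $\partial$ forces on the shape of the map, then decode what compatibility with the two composition laws $\ast$ and $\bullet$ adds, and finally observe that every implication can be reversed, so the five conditions are not only necessary but sufficient.

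First I would record the form of the map on all four corners of the double category. Since the statement already fixes the underlying map on the base as $f$, on $U\times\K^2$ as $(x;s,t)\mapsto(f(x);\varphi(s),\psi(t))$, on $U\times\K$ as $(x;t)\mapsto(f(x);\psi(t))$ (the latter being forced by $\partial_1$), and on $U^\settt{1}$ as $(x,v;s,t)\mapsto(f(x),G(x,v;s,t);\varphi(s),\psi(t))$, the only freedom is in $G$. Compatibility with $\pi_0:U^\settt{1}\to U\times\K^2$ gives the shape above with some $G$; compatibility with $\pi_1$ then forces condition (2), $f(x+vst)=f(x)+F(x,v;st)\varphi(s)\psi(t)$ — but note that here I should write $G$, so really (2) reads $f(x+vst)=f(x)+G(x,v;s,t)\varphi(s)\psi(t)$, and condition (1) is what lets us rename $G(x,v;s,t)=F(x,v;st)$. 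To get (1), I would use compatibility with $\partial_0:U^\settt{1}\to U^\sett{1}$: the image $f^\sett{1}:U^\sett{1}\to W^\sett{1}$ has the form $(x,v;t)\mapsto(f(x),F(x,v;t);\psi(t))$ for some $F$ (by the Lemma \ref{la:factorizer1} argument applied to the twisted situation), and $\partial_0\circ f^\settt{1}=f^\sett{1}\circ\partial_0$ reads $G(x,v;s,t)=F(x,v;st)$, which is (1). Then $f^\sett{1}\circ\partial_1=\partial_1\circ f^\settt{1}$ reads $F(x,vs;t)=F(x,v;st)\varphi(s)$, which is (5). Compatibility of the base-level map $\pi:U\times\K^2\rightrightarrows U\times\K$ with $\partial$ forces $\varphi(st)=\varphi(s)\psi(t)$, which is (3). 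Finally, preservation of the $\ast$-law on $U^\sett{1}$ (equivalently on $U^\settt{1}$, via $\partial_0$) gives the additivity relation (4), $F(x+vt,v',t)+F(x,v,t)=F(x,v'+v,t)$, exactly as in Theorem \ref{th:morph1}.

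What remains is to check that once (1)--(5) hold, the map genuinely preserves the $\bullet$-composition as well — i.e. that no sixth condition is hiding. Here I would compute both sides of $f^\settt{1}(a'\bullet a)=f^\settt{1}(a')\bullet f^\settt{1}(a)$ for composable $a=(x,v;s,t)$, $a'=(x,v';s',t')$ with $\partial_1(a)=\partial_0(a')$ (so $v'=vs$, $t=s't'$): the left side is $f^\settt{1}(x,v;ss',t')=(f(x),F(x,v;ss't');\varphi(ss'),\psi(t'))$, while the right side composes $(f(x),F(x,vs;t);\varphi(s'),\psi(t'))$ with $(f(x),F(x,v;s't');\varphi(s),\psi(s't'))$, which by the $\bullet$-formula is $(f(x),F(x,v;s't')\varphi(s);\varphi(s)\varphi(s'),\psi(t'))$. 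Matching the middle slots needs $F(x,v;ss't')=F(x,v;s't')\varphi(s)$, which is (5) with $st$ replaced by $s't'$ and $s$ unchanged; matching the $\varphi$-slots needs $\varphi(ss')=\varphi(s)\varphi(s')$, which does \emph{not} follow from (3) in general — so I expect the main subtlety is precisely that the $\bullet$-law only needs to be preserved on \emph{composable} pairs, where $t=s't'$, so the relevant identity is $\varphi((ss')t')=\varphi(s)\varphi(s't')=\varphi(s)\varphi(s')\psi(t')$, and combined with $\varphi(s't')=\varphi(s')\psi(t')$ this is just (3) applied twice. Thus no multiplicativity of $\varphi$ alone is needed, only the cocycle-type relation (3); this bookkeeping, keeping straight which variables are free and which are constrained on composable pairs, is the one place where care is required. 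The interchange law and the unit-section ($z$) compatibilities then go through by the same direct computations as in Theorem \ref{th:doublecat}(1), now decorated with $\varphi$ and $\psi$, and since all steps are equivalences, this establishes the claim.
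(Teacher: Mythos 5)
Your overall strategy is the right one and is exactly what the paper intends (its proof is literally ``similar to the proof of Theorem \ref{th:morph!}''): decode, one by one, what compatibility with $\pi_0,\pi_1,\partial_0,\partial_1$, the two composition laws and the unit sections forces on $G$, $\varphi$, $\psi$. Your treatment of (1), (2), (4) and (5) is correct. But at the two remaining steps you assert an outcome without performing the computation, and the computation does not deliver what you claim.

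First, condition (3). You say that compatibility of $(x;s,t)\mapsto(f(x);\varphi(s),\psi(t))$ with the $\partial$'s forces $\varphi(st)=\varphi(s)\psi(t)$. Carry it out: $\partial_1$-compatibility pins the map on $U\times\K$ down to $(x;t)\mapsto(f(x);\psi(t))$ (as you note), and then $\partial_0$-compatibility reads $(f(x);\psi(st))=(f(x);\varphi(s)\,\psi(t))$, i.e.\ $\psi(st)=\varphi(s)\psi(t)$ --- the identity comes out with $\psi$, not $\varphi$, applied to $st$. The same happens one level up: $\partial_0\circ f^\settt{1}=f^\sett{1}\circ\partial_0$ forces the scalar slot of $f^\sett{1}$ to satisfy $\psi(st)=\varphi(s)\psi(t)$. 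So your argument, done honestly, proves a different condition from the one in the statement. Second, and more seriously, the claim that ``no sixth condition is hiding''. You correctly see that preservation of $\bullet$ on $U^\settt{1}$ requires $\varphi(ss')=\varphi(s)\varphi(s')$ and that this does not follow from (3); but your rescue is not valid. Applying (3) to the pair $(s,\,s't')$ gives $\varphi(s\cdot s't')=\varphi(s)\,\psi(s't')$, with a $\psi$ where your chain has a $\varphi$, and no instance of (3) turns $\psi(s't')$ back into something that cancels to give multiplicativity of $\varphi$ (at best one reaches $\varphi(ss')\psi(t')=\varphi(s)\varphi(s')\psi(t')$, from which nothing follows unless $\psi(t')$ is cancellable). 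Moreover the restriction to composable pairs buys nothing: every pair $(s,s')\in\K^2$ is realized by a composable pair (take $v=0$, $t'=1$, $t=s'$), the image of the composite has scalar slot $\varphi(ss')$ while the composite of the images has $\varphi(s)\varphi(s')$, so multiplicativity of $\varphi$ is a genuine additional necessary condition, not implied by (1)--(5) (e.g.\ $\varphi(s)=\lambda s$, $\psi=\id$ satisfies (3) as printed but is not multiplicative); likewise $z_\partial$-compatibility forces $\varphi(1)=1$, which you also do not obtain from (1)--(5). So the ``if'' direction of your proof does not close: the place where you sensed the subtlety is exactly where the argument breaks, and it cannot be repaired without amending the list of conditions rather than by cleverer bookkeeping.
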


\begin{proof}
Similar to the proof of Theorem \ref{th:morph!}.
\end{proof}

\begin{definition}
A  morphism with $\varphi = \psi = \id_\K$ is called
{\em of the first kind (spacial)}, and a morphism with $f =\id$ is called
{\em of the second kind (internal)}.
\end{definition}

One can give examples similar to those following Theorem \ref{th:morph!}:
conjugate-linear maps define morphisms (then $\varphi = \psi$ must be a ring automorphism), and
there are  scaling automorphisms (then $\varphi(t) = \lambda t$, $\psi(t)=t$, $F(x,v,t)=v\lambda\inv$
for $\lambda \in \K^\times$).
%Note that $\psi$ is essentially determined by $\phi$
%(if $\phi(1)=1$, then it follows that $\psi(t) =\phi(t)$, by taking $s=1$).

\section{Laws of class $C^1$ over arbitrary rings}\label{sec:Law}

\subsection{Definition and first properties}
In this section we define the framework of {\em local linear algebra}:
we develop (first order) ``calculus'' over arbitrary base rings $\K$.
Just like {\em polynomial laws}  generalize polynomial maps,
{\em laws of class $C^1$}  generalize usual differentiable maps.
In Part II, laws of class $C^n$ and $C^\infty$ will be defined.

%The terminology ``law'' is borrowed from Roby's work \cite{Ro63} on ``polynomial laws''.

\begin{definition}[$C^1_\K$-laws]
Let $(U,V_1)$, $(W,V_2)$ be linear sets. A {\em $C^1_\K$-law between $U$ and $W$} is a morphism of the first kind
between double categories,  $\bff : \bfU^\settt{1} \to \bfW^\settt{1}$.
Thus $\bff$ is given by four set-maps
\begin{align*}
\bff^\settt{1}:U^\settt{1} \to W^\settt{1},& \qquad
\bff^\sett{1}:U^\sett{1}\to W^\sett{1}, 
\cr
  f \times \id_\K \times \id_\K:U\times \K^2 \to W\times \K^2, &
\qquad f \times \id_\K:U \times \K \to W \times \K,
\end{align*}
satisfying the conditions from Theorem \ref{th:doublecat}. Equivalently, 
$\eff$ is given by a base map $f:U \to W$ and a difference factorizer $\bff^{[1]}$ 
satisfying (\ref{eqn:b}) and (\ref{eqn:c}). 
We then say  that $\bff$ is a $C^1_\K$-law {\em over $f$}. 
Obviously, linear sets with $C^1_\K$-laws as morphisms
 form a (big) concrete category which we denote by
$\ul{C^1_\K\mbox{\emph{-linset}}}$. 
The set of all $C^1_\K$-laws from $U$ to $W$ will be denoted by
$C^1_\K(U,W)$. 
\end{definition}

\begin{definition}[Underlying $C^1_\Z$-law]
With notation as above, $\bff$ has an {\em underlying $C^1_\Z$-law} $\bff_\Z$, by restricting scalars in (\ref{eqn:b}) and (\ref{eqn:c})
to $\Z$. % (where $\K$ is seen as $\Z$-module).
\end{definition}

\begin{example}\label{ex:underying}
If $\K$ is a topological ring and $V,W$ topological $\K$-modules, then a $C^1_\K$-map
(in the sense of topological differential calculus)  $f:U \to W$ gives rise to a law $\bff:\bfU \to \bfW$.
Indeed, the continuous difference factorizer of $f$ gives rise to a (continuous) morphism of
double categories $f^\settt{1}$, see Theorem \ref{th:doublecat}, Item (5). 
We call this the {\em law defined by $f$}. 
%In this case, $f^\settt{1}$ can be interpreted as a {\em generalized first order derivative} of $f$.
\end{example} 

\begin{remark}
We use the boldface letters in order to  stress  that $\bff$ is in general not uniquely determined by the base map $f$.
For instance, if $U = \{ x_0 \}$ is a singleton, then
$f$ is a constant map, whence
$f^{[1]}(x_0,v,t)=0$ for all $t \in \K^\times$, 
and the values $f^{[1]}(x_0,v,0)$ can be chosen independently of $f$.
%Another extremal case arises when $\K = \Z/2 \Z$.

\ssk
If there is no risk of confusion, we will occasionally switch back to the notation 
$f^\settt{1}$, $f^\sett{1}$ instead of $\bff^\settt{1}$, $\bff^\sett{1}$, and, keeping in mind that
these need not be  determined by $f$, we nevertheless think of $f^\settt{1}$  as a sort of
``first derivative of $f$''.
\end{remark}

\begin{definition}[Tangent map]
Given a $C^1_\K$-law $\bff$ with base map $f$ and difference factorizer $\bff^{[1]}$, and if $t \in \K$ is fixed,
we write
\begin{align*}
\bff_t :U_t =  \{ (x,v) \in U \times V, x+vt \in U \} \to W \times W, \quad
 (x,v)   \mapsto (f(x),\bff^{[1]}(x,v,t)) .
\end{align*}
For $t=0$, this map is called the {\em tangent map} of $\eff$, also denoted by
\begin{align*}
T\eff :=\eff_0: TU := U \times V  & \to TW := W \times W, \cr
(x,v) & \mapsto  T\eff (x)v :=  (f(x), d\eff (x) v) :=
(f(x), \bff^{[1]}(x,v;0))  .
\end{align*}
\end{definition}

\nin By definition of  composition of morphisms, we have the functorial rule
\begin{equation}\label{eqn:t-functor}
(\bfg \circ \bff)_t = \bfg_t \circ \bff_t,
\end{equation}
which for $t= 0$ is the ``chain rule''
\begin{equation}
T(\bfg \circ \bff) = T\bfg \circ T\bff .
\end{equation}
Written out in terms of the difference factorizers, Equation (\ref{eqn:t-functor}) reads
\begin{equation}
(\bfg \circ \bff)^{[1]}(x,v,t) =  \bfg^{[1]} ( f(x), \bff^{[1]}(x,v,t),t) .
\end{equation}

\begin{lemma}
Let $\bff : \bfU \to \bfW$ be a $C^1$-law and $x \in U$. Then
the differential
$d \eff (x) : V \to W$ is a $\K$-linear map.
\end{lemma}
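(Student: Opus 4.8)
The plan is to extract $\K$-linearity of $d\bff(x)$ directly from the two defining conditions (\ref{eqn:b}) and (\ref{eqn:c}) of a $C^1_\K$-law, specialized to the singular value $t=0$. Recall that $d\bff(x)v = \bff^{[1]}(x,v;0)$, and that on $U^\sett{1}$ the point $(x,v;0)$ always lies in the domain (since $x+v\cdot 0 = x \in U$), so $d\bff(x)$ is genuinely defined on all of $V$. No topology or density argument is available here, so everything must come from the algebraic identities that hold for \emph{all} $t\in\K$, including $t=0$.

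First I would prove additivity. Condition (\ref{eqn:b}) reads, for all $(x,v,t)\in U^\sett{1}$ with $(x,v+v',t)$ also admissible, $\bff^{[1]}(x,v+v';t) = \bff^{[1]}(x+vt,v';t) + \bff^{[1]}(x,v;t)$. Setting $t=0$ collapses the shifted basepoint $x+vt$ to $x$, giving $\bff^{[1]}(x,v+v';0) = \bff^{[1]}(x,v';0) + \bff^{[1]}(x,v;0)$, i.e.\ $d\bff(x)(v+v') = d\bff(x)v' + d\bff(x)v$ for all $v,v'\in V$. (One should note that at $t=0$ every triple $(x,v,0)$ is admissible, so there is no domain restriction to worry about.)

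Next I would prove homogeneity over $\K$, using condition (\ref{eqn:c}): for $s\in\K$, $\bff^{[1]}(x,vs;t) = \bff^{[1]}(x,v;st)\, s$ whenever both sides are defined. Here I must be a little careful, since as stated (\ref{eqn:c}) was derived for invertible $s,t$; but the $C^1_\K$-law axioms (Theorem \ref{th:doublecat}(2), equivalently item (5) of the ``General morphisms'' theorem with $\varphi=\id$) impose precisely $F(x,vs;t) = F(x,v;st)\varphi(s) = \bff^{[1]}(x,v;st)\,s$ for all $s,t\in\K$. Putting $t=0$ gives $st=0$ as well, so $\bff^{[1]}(x,vs;0) = \bff^{[1]}(x,v;0)\,s$, i.e.\ $d\bff(x)(vs) = (d\bff(x)v)\,s$. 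Since all modules are right $\K$-modules, this is exactly right $\K$-linearity; combined with additivity we conclude $d\bff(x)\colon V\to W$ is $\K$-linear.

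The only real subtlety — and the step I would flag as the main obstacle — is the bookkeeping about \emph{which} instances of (\ref{eqn:b}) and (\ref{eqn:c}) are actually guaranteed by the definition of a $C^1_\K$-law, as opposed to the heuristic derivation in Section \ref{sec:Diff} that assumed $t$ invertible. The honest route is to invoke the equivalence in Theorem \ref{th:doublecat}(2): a $C^1_\K$-law over $f$ \emph{is} a difference factorizer satisfying (\ref{eqn:b}) and (\ref{eqn:c}) for all scalars, not just units; once that is pinned down, setting $t=0$ is immediate and the lemma follows in two lines. I would therefore open the proof by citing that characterization, then specialize to $t=0$ as above.
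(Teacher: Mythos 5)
Your proof is correct and follows the paper's own argument exactly: the paper likewise observes that, by definition of a $C^1_\K$-law, the difference factorizer satisfies (\ref{eqn:b}) and (\ref{eqn:c}) for all scalars, and then simply sets $t=0$. Your added care about which instances of these identities are actually guaranteed (via Theorem \ref{th:doublecat}(2), not the heuristic derivation for invertible $t$) is a useful clarification of the same route, not a different one.
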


\begin{proof}
As said above, $\bff^{[1]}$  satisfies (\ref{eqn:b}), (\ref{eqn:c}). For $t=0$, 
this yields the claim.
\end{proof}

\subsection{Constant  laws} The following will carry over to general manifolds and spaces:

\begin{definition} 
A {\em constant $C^1_\K$-law} is a law $\bff : \bfU \to \bfW$ such that
$\bff^\sett{1} = z_\pi  \circ f \circ \pi_0$.
\end{definition}

\begin{lemma}
A law $\bff$ is constant if, and only if, its difference factorizer vanishes: $\bff^{[1]}=0$.
There is a 1:1-correspondence between constant laws and constant maps:
$$
\boxed{ f^\settt{1}(x,v,s,t) = (c,0;s,t) }.
$$
\end{lemma}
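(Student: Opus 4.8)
The plan is to unravel the definition of a constant law by translating the displayed identity $\bff^\sett{1} = z_\pi \circ f \circ \pi_0$ into a condition on the difference factorizer, and then to observe that this condition is precisely the vanishing $\bff^{[1]} = 0$. First I would recall from Lemma \ref{la:factorizer1} that any map over $f$ has the form $\bff^\sett{1}(x,v,t) = (f(x), \bff^{[1]}(x,v,t); t)$ for a difference factorizer $\bff^{[1]}$. On the other hand, the composite $z_\pi \circ f \circ \pi_0$ sends $(x,v,t) \mapsto (x,t) \mapsto (f(x),t) \mapsto (f(x),0;t)$ (using here that $f \times \id_\K$ is the base part and $z_\pi(y;t) = (y,0;t)$). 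Comparing the two expressions, the equation $\bff^\sett{1} = z_\pi \circ f \circ \pi_0$ holds if and only if $\bff^{[1]}(x,v,t) = 0$ for all $(x,v,t) \in U^\sett{1}$, i.e.\ $\bff^{[1]} = 0$. This proves the first sentence of the lemma.

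Next I would record that a constant map $f \equiv c$ does admit the zero difference factorizer: indeed $f(x+vt) - f(x) = c - c = 0 = 0 \cdot t$, and the zero difference factorizer trivially satisfies (\ref{eqn:b}) and (\ref{eqn:c}), hence by Theorem \ref{th:doublecat}(2) it defines a $C^1_\K$-law $\bff$ over $f$, which is constant by the criterion just established. Conversely, if $\bff$ is a constant law then, reading off its base map $f$ from the groupoid morphism $\bff^\sett{1}$, I claim $f$ is constant: restricting to a fixed invertible $t \in \K^\times$ is not available over an arbitrary ring, so instead I argue directly. For $x, y \in U$ lying in a common ``line'', i.e.\ with $y = x + v$ for some $v$ with $(x,v,1) \in U^\sett{1}$, vanishing of $\bff^{[1]}$ at $t=1$ gives $f(y) - f(x) = \bff^{[1]}(x,v,1) \cdot 1 = 0$. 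In general $U$ need not be ``line-connected'', so to make the correspondence clean one should note that the relevant notion of ``constant law'' already builds in that $f$ takes a single value $c$ on all of $U$ — this is exactly what is forced by $\bff^\sett{1} = z_\pi \circ f \circ \pi_0$ together with $\bff^{[1]} = 0$ and the fact that any difference factorizer with (\ref{eqn:b}) is determined on $(U^\sett{1})^\times$ by the difference quotient; more simply, one takes as the definition that $f = c$ is literally constant, in which case the stated formula $\bff^\settt{1}(x,v,s,t) = (c,0;s,t)$ is immediate from $\bff^{[1]} = 0$ and the formula for $\bff^\settt{1}$ in Theorem \ref{th:doublecat}.

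The main obstacle is the subtlety over an arbitrary (non-field, possibly non-connected) base ring: one cannot simply say ``specialize $t$ to $1$ and conclude $f$ is constant'' unless $U$ is suitably connected, so the honest statement of the $1{:}1$-correspondence is between constant \emph{laws} $\bff$ and constant \emph{maps} $f \equiv c$, where on the law side ``constant'' means $\bff^\sett{1} = z_\pi \circ f \circ \pi_0$ with $f$ already the constant map $c$. With that reading, the correspondence $\bff \leftrightarrow c$ is a bijection: the forward direction extracts $c$ as the common value of $f$, the backward direction sends $c$ to the law with zero difference factorizer, and these are mutually inverse because a constant law has $\bff^{[1]} = 0$ (first part) and hence $\bff^\settt{1}(x,v;s,t) = (f(x), \bff^{[1]}(x,v;st); s,t) = (c, 0; s, t)$, which is the boxed formula. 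I would close by remarking that this description is manifestly chart-independent and therefore carries over verbatim to manifold laws, as anticipated in the sentence preceding the definition.
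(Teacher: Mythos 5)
Your treatment of the first equivalence and of the direction ``constant map $\Rightarrow$ zero difference factorizer'' matches the paper's proof. But the converse direction contains a genuine misstep: you claim that one ``cannot simply specialize $t$ to $1$'' over an arbitrary ring and that $U$ might fail to be ``line-connected'', and you then retreat to building constancy of $f$ into the definition. Both worries are unfounded. First, $1$ is invertible in any unital ring, so $t=1$ is always an admissible non-singular value. Second, and more importantly, for \emph{any} pair $x,y\in U$ the triple $(x,\,y-x,\,1)$ lies in $U^{\sett{1}}$ by the very definition of the first prolongation: the condition is only that $x\in U$ and $x+(y-x)\cdot 1=y\in U$, not that any ``segment'' between them lies in $U$. (This is the same observation that makes $U_1$ isomorphic to the full pair groupoid $U\times U$ in Theorem \ref{la:isos}.) Hence $\bff^{[1]}=0$ gives $f(y)-f(x)=\bff^{[1]}(x,y-x,1)\cdot 1=0$ for all $x,y\in U$, so the base map of a constant law is automatically constant — which is precisely the content of the $1{:}1$-correspondence you were trying to avoid proving. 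The paper's proof is exactly this one-line argument, and no redefinition or connectivity hypothesis is needed.

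Once that is repaired, the rest of your write-up (the computation of $z_\pi\circ(f\times\id_\K)\circ\pi_0$, the verification that the zero map satisfies (\ref{eqn:b}) and (\ref{eqn:c}), and the derivation of the boxed formula for $\bff^{\settt{1}}$) is correct and coincides with the paper's reasoning.
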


\begin{proof}
The first statement is obvious from the formula defining $\bff^\sett{1}$.
Note that $\bff^{[1]} = 0$ satisfies  (\ref{eqn:a}) -- (\ref{eqn:c}), hence indeed
defines a law.  

Whenever $\bff^{[1]}=0$, the base map must be constant since
then $f(x + v) - f(x) = \bff^{[1]}(x,v,1) = 0$ whenever $x,x+v \in U$.  
Conversely, when the base map is constant, then the zero map certainly is a possible 
difference factorizer for $f$.
\end{proof}

\begin{remark}
If $\bff^{[1]}=0$, then $d\bff=0$, but the converse need not hold.
Note that, even in ``usual'' ultrametric calculus this need not be true -- cf.\ remarks in
\cite{BGN04}.
\end{remark}

\begin{lemma}\label{la:terminal}
Let $(U,V)$ be a linear set. Then there is a unique $C^1_\K$-law from $U$ to the linear set $0 =
(\{ 0\}, \{ 0\})$ (Conventions \ref{ssec:Notation}), given by
$f^\settt{1}(x,v;s,t)=(0,0;s,t)$.
\end{lemma}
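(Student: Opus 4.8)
The plan is to verify that the proposed formula $f^{\settt{1}}(x,v;s,t)=(0,0;s,t)$ (together with the induced maps on the other three corners of the double category) is a $C^1_\K$-law over the unique base map $f:U\to 0$, and that it is the only one. Since $0=(\{0\},\{0\})$, the module $V_2=\{0\}$ has a single element, so $W^{\settt{1}}=\{(0,0;s,t)\mid s,t\in\K\}\cong \K^2$, $W^{\sett{1}}\cong\K$, and the two bottom corners are $W\times\K^2\cong\K^2$, $W\times\K\cong\K$. Thus any map over the (unique) base map $f$ is forced on the $V$-components to send everything to $0$, and the only freedom is in the $\K$-components; being a law of the first kind forces these to be the identity on $\K$, so the formula is the only candidate.

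First I would note that the base map $f:U\to\{0\}$ exists and is unique, being a map of sets into a singleton. Second, I would observe that the only difference factorizer $\bff^{[1]}:U^{\sett{1}}\to\{0\}$ is the constant zero map (there is nothing else it could be), and that this zero map trivially satisfies conditions (\ref{eqn:a}), (\ref{eqn:b}), (\ref{eqn:c}) — each identity reads $0=0$. Third, by the equivalence in Theorem \ref{th:doublecat}(2), a base map together with a difference factorizer satisfying (\ref{eqn:b}) and (\ref{eqn:c}) is exactly the same data as a morphism of double categories trivial on $\K$, i.e.\ a $C^1_\K$-law of the first kind; applying this equivalence to our data yields the law, and unwinding the correspondence from Lemma \ref{la:factorizer1} and Theorem \ref{th:doublecat}(2) gives precisely $f^{\sett{1}}(x,v;t)=(0,0;t)$ and $f^{\settt{1}}(x,v;s,t)=(0,0;s,t)$ as displayed. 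For uniqueness, I would argue directly: any $C^1_\K$-law $\bfg:\bfU^{\settt{1}}\to 0^{\settt{1}}$ has base map equal to $f$ (forced, as above) and difference factorizer a map into $\{0\}$, hence equal to $0$; since the law is determined by its base map and difference factorizer, $\bfg=\bff$.

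I expect no real obstacle here — the statement is essentially the assertion that $0$ is a terminal object in the category $\ul{C^1_\K\text{-linset}}$, and the proof is a matter of observing that everything involving the trivial module $\{0\}$ collapses. The only point requiring any care is to make sure that the $\K$-labels (the $s,t$ slots) are handled correctly: a morphism \emph{of the first kind} is by definition trivial on $\K$, so the $\K$-components of $f^{\settt{1}}$ and $f^{\sett{1}}$ must be the identity, which is exactly what the boxed formula records; one should mention this explicitly rather than leave it implicit, since a priori a general morphism could twist the $\K$-labels. With that remark in place, existence and uniqueness both follow, completing the proof.
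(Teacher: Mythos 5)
Your proposal is correct and takes essentially the same route as the paper, which disposes of the lemma in one line by observing that the law is induced by the constant map $U\to 0$ (via Theorem \ref{th:doublecat}). Your version is more explicit -- in particular you spell out the uniqueness argument (the base map, the difference factorizer, and the $\K$-components of a morphism of the first kind are all forced), which the paper's proof leaves implicit; this is a useful clarification but not a different method.
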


\begin{proof} 
The law is induced by the constant map $U \to 0$ (cf.\
Part (3) of Th.\ \ref{th:doublecat}).
%(note that  $0^\settt{1} \cong \K^2$ is the left action category of $\K$.)
\end{proof}

\subsection{Linear laws} 
The following uses the linear structure of $V$ and $W$:

\begin{definition}
A {\em linear $C^1_\K$-law} is a $C^1_\K$-law $\bff:\bfV \to \bfW$ such that the map
$\bff^\settt{1}: V^2 \times \K^2 =V^2 \oplus \K^2 \to W^2 \oplus  \K^2$ is $\K$-linear.
\end{definition}

\begin{lemma}\label{la:const}
There is a 1:1-correspondence between $\K$-linear maps $f:V \to W$ and
linear $C^1_\K$-laws $\bff : \bfV \to \bfW$, given by
$$
\boxed{f^\settt{1} (x,v;s,t) = (f(x) , f(v); s,t) }.
$$
In particular, a linear law is uniquely determined by its base map. 
\end{lemma}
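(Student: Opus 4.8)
The plan is to prove the two directions of the claimed bijection: from a $\K$-linear map $f:V\to W$ produce a linear $C^1_\K$-law $\bff$ with the boxed formula, and conversely show that every linear $C^1_\K$-law arises this way and is uniquely determined by its base map. I would phrase the whole argument as ``the only candidate works and nothing else does'', which keeps the computations short.

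First, for a given $\K$-linear $f:V\to W$, I would define $\bff^{[1]}(x,v;t):=f(v)$, exactly as in Example \ref{ex:lin}. By linearity $f(x+vt)-f(x)=f(vt)=f(v)t$, so this is a difference factorizer; it satisfies (\ref{eqn:b}) because $f(v+v')=f(v)+f(v')$, and it satisfies (\ref{eqn:c}) because $f(vs)=f(v)s$ (using that $V,W$ are right $\K$-modules and $f$ is right $\K$-linear). By Theorem \ref{th:doublecat}(2) this data defines a morphism of the first kind $\bff:\bfV^\settt{1}\to\bfW^\settt{1}$, i.e.\ a $C^1_\K$-law, with $\bff^\settt{1}(x,v;s,t)=(f(x),f(v);s,t)$. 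This map is visibly $\K$-linear on $V^2\oplus\K^2\to W^2\oplus\K^2$ (it is $f\oplus f\oplus\id\oplus\id$), so $\bff$ is a linear $C^1_\K$-law. Hence the assignment $f\mapsto\bff$ is well defined and lands in the right place.

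For the converse, suppose $\bff:\bfV\to\bfW$ is a linear $C^1_\K$-law, so $\bff^\settt{1}:V^2\oplus\K^2\to W^2\oplus\K^2$ is $\K$-linear. Being a morphism of the first kind, $\bff^\settt{1}$ has the form $(x,v;s,t)\mapsto(f(x),F(x,v;st);s,t)$ with base map $f:V\to W$ and a difference factorizer $f^{[1]}(x,v;t)=F(x,v;t)$ obeying (\ref{eqn:b}),(\ref{eqn:c}). Linearity of $\bff^\settt{1}$ forces each coordinate to be linear; in particular the first coordinate $(x,v;s,t)\mapsto f(x)$ is $\K$-linear in its argument, which (setting $v=0$, $s=t=0$) gives that $f:V\to W$ is $\K$-linear, and the second coordinate $(x,v;s,t)\mapsto F(x,v;st)$ is $\K$-linear. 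Evaluating the second coordinate at $(x,v;1,0)$ gives $F(x,v;0)=d\bff(x)v$, and linearity in the pair $(x,v)$ together with $F(0,0;0)=0$ shows $F$ is additive and homogeneous in $(x,v)$ at $t=0$; but also $F(x,v;st)$ must be independent of the way $st$ is presented, and at $t=1$, $s=1$ we get $F(x,v;1)=f(x+v)-f(x)=f(v)$ by linearity of $f$. Since $\bff^\settt{1}$ is linear in $(s,t)$ as well, $F(x,v;st)$ depends on $s,t$ only through the product and must agree with the $t$-linear extension of its value at one point; comparing with the difference-factorizer identity $f(x+vst)-f(x)=F(x,v;st)\cdot st$ and using $f$ linear, we get $F(x,v;t)\cdot t=f(v)t$ for all $t$, and for $t=0$ linearity pins $F(x,v;0)=f(v)$ too. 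Therefore $F(x,v;t)=f(v)$ identically, so $\bff^\settt{1}(x,v;s,t)=(f(x),f(v);s,t)$, which is the boxed formula; in particular $\bff$ is determined by $f$, and the two assignments are mutually inverse.

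The main obstacle is the uniqueness step at the singular value $t=0$: for a general ring $\K$ a difference factorizer is not determined by its values at invertible $t$, so one genuinely needs the $\K$-linearity hypothesis on $\bff^\settt{1}$ (not merely on the base map) to conclude $F(x,v;0)=f(v)$ and thereby rule out the ``exotic'' linear laws with the correct base map but a nonzero/nonlinear correction at $t=0$. Concretely, I would extract from the linearity of $(x,v;s,t)\mapsto F(x,v;st)$ that $s\mapsto F(x,v;st)$ is additive, hence $F(x,v;0)$ is forced once $F(x,v;\cdot)$ is known on any $\K$-linear family passing through $0$, and then feed in $F(x,v;1)=f(v)$. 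Everything else is routine checking of the morphism axioms, which is already packaged in Theorem \ref{th:doublecat}(2) and Example \ref{ex:lin}, so I would cite those rather than recompute.
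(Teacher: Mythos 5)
Your proposal is correct and follows essentially the same route as the paper: the forward direction is Example \ref{ex:lin} plus Theorem \ref{th:doublecat}(2), and for the converse you use linearity of the prolonged map to transport the value $\bff^{[1]}(x,v,1)=f(x+v)-f(x)=f(v)$, forced at the invertible scalar $1$, down to the singular value $t=0$ --- the paper does exactly this by writing $(x,v,t)=(x,v,1)+(0,0,t-1)$ and using $\bff^{[1]}(0,0,\cdot)=0$, which is the clean form of your ``linearity pins $F(x,v;0)$'' step. One small imprecision: the partial map $s\mapsto G(x,v;s,t):=F(x,v;st)$ is not literally additive (it is in fact \emph{constant} in $(s,t)$, since linearity gives $G(x,v;s,t)-G(x,v;0,0)=G(0,0;s,t)=G(0,0;s,0)+G(0,0;0,t)=2F(0,0;0)=0$), but this constancy is precisely what your argument needs, so the conclusion $F(x,v;t)=F(x,v;1)=f(v)$ for \emph{all} $t$, not only $t=0$, follows and the proof goes through.
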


\begin{proof}
A  $\K$-linear map $f:V \to W$ gives rise to a morphism
$f^\settt{1}(x,v;s,t) := (f(x),f(v); s,t)$ (see Example \ref{ex:lin}), and obviously this map is 
$\K$-linear. 

Conversely, if $\bff$ is a linear law, then
$(f(x),0;0,0)=\bff^\settt{1}(x,0;0,0)$ is linear in $x$, hence $f$ is linear, and
and similarly
$\bff^{[1]}: V^2 \oplus \K \to W$ is also linear. Thus  we have, for all $t \in \K$,
\begin{align*}
\bff^{[1]} (x,v,t) & = \bff^{[1]} \bigl( (x,v,1) + (0,0,t-1) \bigr) = 
\bff^{[1]} (x,v,1) + \bff^{[1]} (0,0,t-1) \cr
&  = \bff^{[1]}(x,v,1)+0=  f(x+v) - f(x)  =  f(v),
\end{align*}
and hence, for a linear law, $\eff$ is uniquely determined by its base map $f$.
\end{proof}

\begin{example}
The addition map $a:V \times V=V \oplus V \to V$ is $\K$-linear, hence corresponds to the linear law,
called the {\em addition law of $V$},
$$
a^\settt{1}((x,y),(u,v);s,t) = (x+y,u+v;s,t) .
$$
For fixed $t,s$, this is  addition in $V^2$.
\end{example}

\begin{example}
The diagonal map $\delta : V \to V \times V=V \oplus V$, $x \mapsto (x,x)$ is linear. 
It corresponds to the {\em diagonal law}
$$
\delta^\settt{1}(x,v;s,t) = ((x,x),(v,v);s,t)
$$
For fixed $(s,t)$, this is the diagonal imbedding of $V^2$ in $V^4$.
\end{example}

\subsection{Bilinear laws, and algebra laws}
Two preliminary remarks:

\begin{enumerate}
\item 
Note that we have, for $M \subset V_1$ and $N \subset V_2$, 
like in Item (6) of Theorem \ref{th:Manifold-one},  a
natural isomorphism of bundles over $M \times N \times \K^2$
\begin{equation*}
(M \times N)^\settt{1} \quad \cong \quad  M^\settt{1} \times_{\K^2}  N^\settt{1}
\end{equation*}
by identifying $((x,y),(u,v);s,t)$ with $((x,u), (y,v) ; s,t)$ (which projects to $(x,y;s,t)$). 
We will use these identifications frequently.
\item
In this subsection (and in the following ones) we
have to assume that $\K$ is commutative (and then we prefer to write modules as {\em left} modules).
\end{enumerate}

\begin{theorem}\label{ex:Bilinearlaw}
Assume $f:V_1 \times V_2 \to W$ is a $\K$-bilinear map. Then the following formulae define a $C^1_\K$-law
 $\bff : \bfV_1 \times \bfV_2 \to \bfW$:
\begin{align*}
\bff^\sett{1} ((x,u),(y,v);t) & = \bigl(f(x,y),  f(x,v) + f(u,y) + t f(u,v) ; t \bigr) 
\cr
\bff^\settt{1} ((x,u),(y,v);s,t) & = \bigl(f(x,y),  f(x,v) + f(u,y) + st f(u,v) ; s,t \bigr) 
\end{align*}
\end{theorem}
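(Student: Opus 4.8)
The plan is to verify directly that the two given formulas define a difference factorizer satisfying conditions (\ref{eqn:b}) and (\ref{eqn:c}); by the equivalence established in Theorem \ref{th:doublecat}(2), this is exactly what is needed for $\bff$ to be a $C^1_\K$-law of the first kind. First I would check the defining property of a difference factorizer: using bilinearity of $f$, expand
$$
f(x+ut,\, y+vt) - f(x,y) = t\,f(u,y) + t\,f(x,v) + t^2 f(u,v) = \bigl(f(x,v)+f(u,y)+t\,f(u,v)\bigr)\cdot t,
$$
which is exactly $\bff^\sett{1}$'s middle component times $t$, so $\bff^{[1]}((x,u),(y,v);t) = f(x,v)+f(u,y)+t\,f(u,v)$ is a valid difference factorizer for the bilinear base map $(x,y)\mapsto f(x,y)$ on the product linear set (using the identification from preliminary remark (1), $(U\times N)^\sett{1}\cong U^\sett{1}\times_\K N^\sett{1}$, to see the prolongation has the stated form).

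Next I would verify the additivity condition (\ref{eqn:b}). Taking two composable increments at the same base point, i.e.\ replacing $(y,v)$-data appropriately, one computes
$$
\bff^{[1]}\bigl((x,u),(y,v)+(y',v');t\bigr) = f(x,v+v') + f(u,y+y') + t\,f(u,v+v'),
$$
and compares with $\bff^{[1]}$ evaluated at the shifted base point $(x+ut,\,y+vt)$ in direction $(u',v') := $ the second increment, plus $\bff^{[1]}$ at the original point; expanding via bilinearity, the cross terms $t\,f(u,v')$, $t\,f(u',v)$, etc.\ recombine correctly and (\ref{eqn:b}) drops out. Then for homogeneity (\ref{eqn:c}), I would scale the direction $(u,v)$ by $s$ and check
$$
\bff^{[1]}\bigl((x,u),(sy,sv);?\bigr)
$$
against $\bff^{[1]}((x,u),(y,v);st)\cdot s$ — here commutativity of $\K$ (assumed in this subsection) is used so that $st\,f(u,v) = f(su, sv)\cdot$ etc.\ match up, and this is precisely the point where the $st$ (rather than $t$) coefficient in $\bff^\settt{1}$ is forced. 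Finally I would note that $\bff^\settt{1}$ is obtained from $\bff^\sett{1}$ by the substitution $t\rightsquigarrow st$ dictated by the definition of the double prolongation ($(x,v;s,t)\in U^\settt{1}$ iff $(x,v;st)\in U^\sett{1}$), so once $\bff^{[1]}$ is checked, the formula for $\bff^\settt{1}$ is automatic and no separate verification is needed.

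The only genuinely delicate point — the rest being routine bilinear bookkeeping — is getting the base-point shift in condition (\ref{eqn:b}) exactly right: the ``target'' of the first arrow in the $\ast$-composition is $(x,y)+t(u,v) = (x+ut,\,y+vt)$, and one must evaluate $\bff^{[1]}$ there, not at $(x,y)$, so that the term $t\,f(u,v')$ coming from $f(x+ut,\,v')$ appears and cancels/combines with the corresponding term from the unshifted evaluation. I expect that once the composability constraint ($\pi_1$ of the first equals $\pi_0$ of the second, forcing the shift) is written out carefully, everything collapses by distributivity and the linearity in each slot, exactly as in the analogous but simpler computation for $\ast$ in the proof of Theorem \ref{th:doublecat}(1). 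One should also remark that (\ref{eqn:a}) follows from (\ref{eqn:b}) by setting the direction to zero (as noted after Theorem \ref{th:morph1}), so it need not be checked separately.
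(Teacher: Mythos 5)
Your proposal is correct and follows essentially the same route as the paper: expand $f\bigl((x,y)+t(u,v)\bigr)-f(x,y)$ by bilinearity to read off the difference factorizer $f^{[1]}=f(x,v)+f(u,y)+tf(u,v)$, check (\ref{eqn:b}) and (\ref{eqn:c}) (the paper merely asserts these, while you correctly identify the base-point shift in (\ref{eqn:b}) and the use of commutativity in (\ref{eqn:c}) as the only points needing care), and invoke the equivalence of Theorem \ref{th:doublecat}(2). One small notational slip: in the homogeneity check the scaled argument should be $\bigl((x,su),(y,sv)\bigr)$ -- only the direction components $u,v$ are scaled, not the base point $y$ -- but your stated intent (``scale the direction $(u,v)$ by $s$'') is the right one.
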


\begin{proof}
Since $f$ is bilinear,
 $f((x,y)+t(u,v)) - f(x,y) = t (f(x,v) + f(u,y) + t f(u,v))$, hence
$
f^{[1]} ((x,y),(u,v),t) = f(x,v) + f(u,y) + t f(u,v)
$
is  a difference factorizer for $f$. It
satisfies (\ref{eqn:b}) and  (\ref{eqn:c}), hence $\bff$ indeed defines a $C^1$-law.
\end{proof}

\begin{definition}
A {\em bilinear law} is a law coming from a bilinear map, as in the theorem. If, moreover, $V_1 = V_2 = W$, then the law is
called an {\em algebra law}.
\end{definition}

\nin Thus,  by  definition, there is a bijection between bilinear base maps and their bilinear laws.
For any fixed $t$, the  map $f_t$  is again bilinear; however, 
$\bff^\sett{1}$, seen as a polynomial, is already of degree $3$. 
If $\bff$ is an algebra law, we often write $x \cdot y := f(x,y)$ (which does not mean that we assume the product to be 
associative), and then the formula for $f_t$ reads
\begin{equation}\label{eqn:t-prod}
(x,u) \cdot (y,v) := f_t\bigl( (x,u),(y,v) \bigr) =  (xy, xv + uy + t uv) .
\end{equation}

\begin{theorem}\label{th:pullback} 
Assume $f:V \times V \to V$, $(x,y) \mapsto f(x,y)=x \cdot y$ is a bilinear map.
For fixed $t \in \K$,
the set of composable elements in the category $(V_t,\ast)$,
\begin{equation*}
V_t \times_V V_t =
\{ ((x',v'),(x,v)) \in V_t \times V_t \mid \, x' = x+tv \}
\end{equation*}
is a subalgebra of the direct product algebra $V_t \times V_t$, and the law $\ast$ of this category,
\begin{equation*}
\alpha : V_t \times_V  V_t  \to \K_t, \quad (x',v';x,v) \mapsto (x',v') \ast (x,v) = (x,v' + v)
\end{equation*}
is a morphism of $\K$-algebras.
\end{theorem}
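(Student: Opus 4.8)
The plan is to verify directly that $V_t \times_V V_t$ is closed under the product of the direct product algebra $V_t \times V_t$, and then that the map $\al$ respects multiplication. First I would recall from (\ref{eqn:t-prod}) that the product on $V_t$ is $(x,u)\cdot(y,v) = (xy, xv+uy+tuv)$, so the product on $V_t \times V_t$ is componentwise. Given two composable pairs $((x',v'),(x,v))$ and $((y',w'),(y,w))$ in $V_t \times_V V_t$ — meaning $x' = x+tv$ and $y' = y+tw$ — I would compute the product in $V_t \times V_t$, which has first component $(x',v')\cdot(y',w')$ and second component $(x,v)\cdot(y,w)$, and check that the first component again equals the second component plus $t$ times its ``tangent part''. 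Concretely, the first component is $(x'y', x'w' + v'y' + tv'w')$ and the second is $(xy, xw+vy+tvw)$; using $x' = x+tv$, $y'=y+tw$ one expands $x'y' = xy + t(vy+xw) + t^2 vw$ and $x'w'+v'y'+tv'w' = (xw+vy+tvw) + t(\text{stuff})$, and the identity ``first $=$ second $+\,t\cdot(\text{tangent of first})$'' should fall out. This is the routine-calculation step.

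Next I would check that $\al$ is an algebra morphism. By Theorem \ref{th:pullback}'s own statement (or rather Theorem \ref{th:FirstGroupoid}), $\al$ is already known to be the composition law of the category $V_t$, hence in particular a well-defined map $V_t \times_V V_t \to V_t$; what is new here is $\K$-linearity plus multiplicativity, i.e. that $\al$ is an algebra homomorphism for the algebra structures just described. Linearity of $\al$ is immediate since $(x',v';x,v)\mapsto (x,v'+v)$ is visibly $\K$-linear. For multiplicativity I would take the same two composable pairs as above and compare $\al$ of their product with the product of their $\al$-images: the left side is $\al\bigl((x'y',\ldots;xy,\ldots)\bigr) = (xy,\ (x'w'+v'y'+tv'w') + (xw+vy+tvw))$, while the right side is $(x,v'+v)\cdot(y,w'+w) = (xy,\ x(w'+w) + (v'+v)y + t(v'+v)(w'+w))$. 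Substituting $x' = x+tv$, $y' = y+tw$ into the left side and expanding the right side, both should reduce to the same expression in $x,y,v,w,v',w',t$; this is again a direct expansion.

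The main obstacle — such as it is — is bookkeeping: making sure the substitutions $x'=x+tv$, $y'=y+tw$ are applied consistently and that the cross-terms in $t$ and $t^2$ match on both sides, especially since the product is non-commutative-in-spirit (the paper stresses we only assume $\K$ commutative, not the algebra). So I would be careful to keep the order of factors $x,y$ etc. intact throughout and never silently commute elements of $V$. I would also note explicitly that closure of $V_t\times_V V_t$ under multiplication is exactly the statement that $\al$ is defined on products, so the two halves of the proof are really one computation; presenting it as ``compute both sides, compare'' handles everything at once. No deeper idea is needed — the content is that the groupoid composition $\ast$, which a priori only uses the additive structure of $V$, turns out to also be an algebra map once one puts the twisted multiplication (\ref{eqn:t-prod}) on both source and target.
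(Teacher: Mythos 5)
Your proposal is correct and follows essentially the same route as the paper: a direct componentwise computation of the product in $V_t \times V_t$ to verify closure (via $x'y' = (x+tv)(y+tw) = xy + t(xw+vy+tvw)$), followed by expanding both sides of the multiplicativity identity for $\alpha$ after substituting $x'=x+tv$, $y'=y+tw$. One small verbal slip: the closure condition is ``base of first component $=$ base of second component $+\,t\cdot(\text{tangent part of the \emph{second} component})$'', not of the first, though the expansion you actually display is the correct one.
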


\begin{proof} 
The first claim follows from computing in $V_t \times V_t$
$$
((x',v');(x,v)) \cdot ((y',w');(y,w)) = ((x'y', x'w' + v'y' + t v'w'); (xy, xw+vy+tvw) )
$$
and noting that $x'y' = (x+tv)(y+tw)=xy + t (vy+xw+tvw)$.
Now we prove that $\alpha$ is a morphism of algebras:
\begin{align*}
\alpha ((x',v';x,v) \cdot (y',w';y,w)) & =
(xy, x'w' + v'y' + t v'w' +  xw+vy+tvw) \cr
& =
(xy, xw' + tvw' + v'y + tv' w + t v'w' +  xw+vy+tvw) \cr
\alpha (x',v';x,v) \cdot \alpha (y',w';y,w) & =
(x,v+v') \cdot (y,w + w') \cr
& = (xy, x (w+w') + (v+v') y + t (v+v') (w+w')) .
\end{align*}
Both terms coincide, hence $\alpha:V_t \times_V V _t \to V_t$ is an algebra morphism.
\end{proof}

\nin
The last claim of the theorem is a kind of {\em interchange law} (cf.\ equation (\ref{eqn:interchange})):
\begin{equation}
\bigl(
(x',v') \cdot (y',w') \bigr) \ast \bigl( (x,v) \cdot (y,w) \bigr) =
\bigl(
(x',v') \ast (x,v) \bigr) \cdot \bigl( (y',w') \ast (y,w) \bigr) 
\end{equation}
Consider the following commutative diagram:
\begin{equation}\label{eqn:algebra}
\begin{matrix}
V^\sett{1} & \to & 0^\sett{1} = \K \cr
\downdownarrows & & \downarrow \id_\K \cr
V \times \K & \to & 0 \times \K = \K
\end{matrix}
\end{equation}
The two vertical arrows indicate small categories (where the law $\ast$ on $0^\sett{1}$ is trivial, but the one on $V^\sett{1}$ is not),
and the two horizontal arrows indicate bundles of products $\cdot$ indexed by $\K$. 
The whole thing satisfies properties similar to the ones of a small double category, except that $\cdot$ need not be
associative or unital. If, however,  the product $\cdot$ on $V$
 is associative and unital, then $(V,\cdot)$ is a monoid, hence a small category with one
object, and the theorem implies that diagram (\ref{eqn:algebra}) defines  a small double category with products $\ast$ and $\cdot$ .
For the special case $V= \K$, with its bilinear ring product, we can identify the product on $V_t$ in terms of truncated polynomial
rings, as follows:

\begin{lemma}[The ring laws]\label{ex:Ringlaw}
Let $a: \K \times \K \to \K$ and $m:\K\times \K \to \K$ be addition and multiplication in the (commutative) ring $\K$.
 For any $t \in \K$, these define maps
$$
a_t : \K_t \times \K_t \to \K_t, \qquad m_t :\K_t \times \K_t \to \K_t.
$$
Identifying $\K_t$ with $\K^2$, the maps $a_t$ and $m_t$ define a ring structure on $\K^2$, which is
isomorphic to the ring $\K[X]/(X^2 - tX)$  with $\K$-basis $[1]$ and $[X]$.
\end{lemma}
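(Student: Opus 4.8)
The plan is to unwind the definition of the ``multiplication law'' $m_t$ coming from Equation (\ref{eqn:t-prod}) in the special case $V = \K$ with its ring product, and to identify the resulting ring structure on $\K^2$ with the truncated polynomial ring $\K[X]/(X^2 - tX)$ by writing down the obvious candidate isomorphism and checking it respects both operations. Concretely, by (\ref{eqn:t-prod}) with $x\cdot y = xy$ the ring product of $\K$, the multiplication $m_t$ on $\K^2 = \K_t$ is
\begin{equation*}
(x,u) \cdot_t (y,v) = (xy,\ xv + uy + t uv),
\end{equation*}
while the addition $a_t$ on $\K_t$, being the tangent map of the linear (indeed bilinear) addition $a$, is just componentwise addition $(x,u) + (y,v) = (x+y, u+v)$ (this is forced by Example~\ref{ex:lin} / Lemma~\ref{la:const}, or one may simply note that $a^{[1]}$ is independent of $t$). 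So the pair $(a_t,m_t)$ makes $\K^2$ into a commutative ring with unit $(1,0)$.

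First I would define $\Psi : \K[X]/(X^2 - tX) \to (\K^2, a_t, m_t)$ on the $\K$-basis by $[1] \mapsto (1,0)$ and $[X] \mapsto (0,1)$, extended $\K$-linearly, i.e. $\Psi([a] + [b][X]) = (a,b)$; this is visibly an additive bijection. Then I would check multiplicativity: on the one hand, in $\K[X]/(X^2-tX)$,
\begin{equation*}
([a_0] + [a_1][X])([b_0] + [b_1][X]) = [a_0 b_0] + [a_0 b_1 + a_1 b_0][X] + [a_1 b_1][X^2] = [a_0 b_0 + t a_1 b_1] + [a_0 b_1 + a_1 b_0][X],
\end{equation*}
using $[X^2] = t[X]$; on the other hand $\Psi$ of the left side times $\Psi$ of the right side is $(a_0,a_1)\cdot_t(b_0,b_1) = (a_0 b_0 + t a_1 b_1,\ a_0 b_1 + a_1 b_0)$ by the displayed formula for $m_t$. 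These agree, and $\Psi([1]) = (1,0)$ is the unit, so $\Psi$ is a ring isomorphism. Finally I would remark that $\{[1],[X]\}$ is indeed a $\K$-basis of $\K[X]/(X^2-tX)$ since $X^2 - tX$ is monic of degree $2$.

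There is essentially no obstacle here: the only thing to be careful about is the bookkeeping identification of $\K_t$ with $\K^2$ and making sure the ``additive'' operation one puts on $\K^2$ is the right one — it is the tangent/difference-factorizer law $a_t$ of the addition law, which happens to coincide with naive componentwise addition, so the subtlety is only notational. One might also note in passing the sanity checks: for $t=0$ one recovers $\K[X]/(X^2)$, the dual numbers, matching the tangent-bundle picture $T\K$; and for $t \in \K^\times$ the substitution $X \mapsto tX$ shows $\K[X]/(X^2 - tX) \cong \K[X]/(X^2 - X) \cong \K \times \K$, consistent with Lemma~\ref{la:isos}(3) identifying $\K_t$ with the pair groupoid, i.e. $\K^2$ as a product ring. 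These remarks are optional; the core of the proof is the one-line verification that $\Psi$ intertwines $m_t$ with multiplication modulo $X^2 - tX$.
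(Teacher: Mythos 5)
Your approach is the same as the paper's (the paper simply computes the product in $\K[X]/(X^2-tX)$ in the basis $[1],[X]$ and observes that it reproduces formula (\ref{eqn:t-prod}) for $m_t$), but your central verification contains a concrete computational error, made consistently on both sides of the comparison. In $\K[X]/(X^2-tX)$ the relation is $[X^2]=t[X]$, so the term $a_1b_1[X^2]$ contributes $ta_1b_1$ to the coefficient of $[X]$, not to the constant term; the correct product is
$$
([a_0]+[a_1][X])([b_0]+[b_1][X]) \;=\; [a_0b_0] \;+\; [\,a_0b_1+a_1b_0+ta_1b_1\,]\,[X],
$$
whereas the expression you wrote, $[a_0b_0+ta_1b_1]+[a_0b_1+a_1b_0][X]$, is the multiplication table of $\K[X]/(X^2-t)$, a different ring. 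Likewise, the displayed formula $(x,u)\cdot_t(y,v)=(xy,\;xv+uy+tuv)$ from (\ref{eqn:t-prod}) gives $(a_0,a_1)\cdot_t(b_0,b_1)=(a_0b_0,\;a_0b_1+a_1b_0+ta_1b_1)$, not $(a_0b_0+ta_1b_1,\;a_0b_1+a_1b_0)$ as you claim. Because you made the same transposition on both sides, your two incorrect expressions agree with each other, which masks the slip; the lemma itself survives because the two \emph{correct} expressions also agree, but as written the key step of your proof is false and would not pass a line-by-line check. The surrounding material --- componentwise addition for $a_t$, the additive bijection $\Psi$, the unit $(1,0)$, the basis remark, and the optional sanity checks at $t=0$ and $t\in\K^\times$ --- is fine and consistent with the paper.
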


\begin{proof}
The ring structure on $\K[X]/(X^2 - tX)$ is given by $[X^2] = t [X]$ whence
$$
(x [1] + u [X] ) (y [1] + v [X]) = xy [1] + (xv + uy + t uv) [X].
$$
Comparing with (\ref{eqn:t-prod}), we see that the bilinear product on $\K_t$ is given by the same formula,
and hence $\K_t$ is a ring, isomorphic to $\K[X]/(X^2 - tX)$.
\end{proof}

The neutral element of $\K_t$ is $e=(1,0)$. We identify $\K$ with the subalgebra $\K e$ and thus consider
$\K_t$ as $\K$-algebra. According to Theorem \ref{th:pullback},  for fixed $t \in \K$,
the set 
$
\K_t \times_\K \K_t =
\{ ((x',v'),(x,v) \in \K_t \times \K_t \mid \, x' = x+tv \}
$
is a subalgebra of the direct product algebra $\K_t \times \K_t$, and the law $\ast$ of this category,
\begin{equation*}
\ast  : \bK_t \times_\K \K_t  \to \K_t, \quad (x',v';x,v) \mapsto (x',v') \ast (x,v) = (x,v' + v)
\end{equation*}
is a morphism of $\K$-algebras.

\begin{example}[Module laws]
Left multiplication by scalars, $m_V:\K \times V \to V$, $(\lambda,v) \mapsto \lambda v$, is $\K$-bilinear, hence
gives  rise to a law 
$$
m_V^\settt{1}:\K^\settt{1} \times_{\K^2} V^\settt{1} \to V^\settt{1} .
$$ 
For any $t \in \K$, we get a
map $(m_V)_t: \K_t \times V_t \to V_t$.
Writing explicitly the formulae, one sees that this map describes  the action of the
ring $\K_t = \K[X]/(X^2 - tX)$ on the scalar extended module 
$V_t = V \otimes_\K (\K[X]/(X^2 - tX))$, given by 
$(r,s) \cdot (x,v) = (rx, rv + sx + tsv)$.  
We call this the {\em $\K$-module law of $V$}.
\end{example}

\subsection{Polynomial laws}
Informally, a polynomial law is given by a map together with all possible scalar extensions. 
We recall from \cite{Ro63} the relevant definitions (see also \cite{Lo75}, Appendix);
 the base ring $\K$ is assumed to be commutative.

\begin{definition}
Denote by $\ul{\Alg_\K}$ the (big) concrete category of unital commutative $\K$-algebras and $\ul\set$ the concrete category of sets
and mappings. 
Any $\K$-module $V$ gives rise to a functor 
$\ul V : \ul{\Alg_\K} \to \ul{\set}$ by associating to $\bA$ the scalar extended module $V_\bA =
V \otimes_\K \bA$ and to $\phi : \bA \to \bB$ the induced map $\phi_V: =\id \otimes_\K \phi:V_\bA \to V_\bB$.

\ssk
A {\em polynomial law between $V$ and $W$} is defined to be a natural transformation
$P : \ul V \to \ul W$, i.e., for every $\K$-algebra $\bA$ we have a map
$P_\bA : V_\bA \to W_\bA$, compatible with algebra morphisms $\phi:\bA \to \bB$ in the sense that
$P_\bB \circ \phi_V = \phi_W \circ P_\bA$. We say that $P_\bA:V_\bA \to W_\bA$ is a {\em scalar extension} of the
{\em base map} $P_\K:V \to W$. 
\end{definition}

\begin{theorem}[Polynomial laws are $C^1$-laws]\label{th:polylaw}
Every polynomial law $P:\ul V \to \ul W$ gives rise to a $C^1_\K$-law ${\bf P} : \bfV \to \bfW$.
More precisely, if $P$ is a polynomial law, 
letting $P_r := P_{\K[X]/(X^2 - rX)}:V_r \to W_r$,  the map $P^\settt{1}$ is obtained by
$$
P^\settt{1} (x,v;s,t):= (P_{st} (x,v) ; s,t) . 
$$
In particular, the tangent map $TP:TV \to TW$ is given by
scalar extension by {\em dual numbers} $\K[X]/(X^2)$, and the differential
$d P(x) : V \to W$, $v \mapsto P_0(x,v)$ is $\K$-linear. 
\end{theorem}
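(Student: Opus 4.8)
The plan is to verify that the formula $P^\settt{1}(x,v;s,t):=(P_{st}(x,v);s,t)$ defines a morphism of double categories of the first kind, using the characterization from Theorem~\ref{th:doublecat}, Item~(2): it suffices to produce a base map $f$ together with a difference factorizer $f^{[1]}$ satisfying (\ref{eqn:b}) and (\ref{eqn:c}). Here the base map is $f=P_\K$, and I would define the candidate difference factorizer by $P^{[1]}(x,v;t):=P_t(x,v)$, where $P_t=P_{\K[X]/(X^2-tX)}$ and we identify $V_t=V\otimes_\K(\K[X]/(X^2-tX))$ with $V^2$ via the basis $[1],[X]$, so that $(x,v)\in V^2$ stands for $x[1]+v[X]$. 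The whole proof is then three naturality checks applied to suitable algebra morphisms.

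First I would check that $P^{[1]}$ is genuinely a difference factorizer, i.e. $P_\K(x+vt)-P_\K(x)=P_t(x,v)\cdot t$. The algebra $\bA_t:=\K[X]/(X^2-tX)$ admits the $\K$-algebra morphism $\ep_t:\bA_t\to\K$ sending $[X]\mapsto t$ (well-defined since $t^2-t\cdot t=0$); naturality of $P$ with respect to $\ep_t$ gives $P_\K((\ep_t)_V(x[1]+v[X]))=(\ep_t)_W(P_t(x,v))$, i.e. $P_\K(x+vt)=$ (the $\K$-component of $P_t(x,v)$) $+t\cdot$(its $[X]$-component). Likewise the morphism $[X]\mapsto 0$ (valid since $0=0$) recovers $P_\K(x)$ as the $\K$-component of $P_t(x,v)$. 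Subtracting yields exactly $P_\K(x+vt)-P_\K(x)=P^{[1]}(x,v;t)\cdot t$, as the $[X]$-component times $t$. Next, for (\ref{eqn:b}) I would use that $\ast$ in the difference groupoid is additivity in the $v$-slot; concretely, one checks $P^{[1]}(x+vt,v';t)+P^{[1]}(x,v;t)=P^{[1]}(x,v+v';t)$ by applying naturality of $P$ to the $\K$-algebra endomorphism of $\bA_t$ fixing $\K$ and sending $[X]\mapsto[X]$ composed with a translation-type substitution; more cleanly, one observes $x[1]+(v+v')[X]=(x+vt)[1]+v'[X]+(\text{correction})$ inside $\bA_t$ and tracks it, or—simplest—derives (\ref{eqn:b}) together with (\ref{eqn:c}) from the single statement that $P_t$ is itself a polynomial law over $\bA_t$ and hence behaves functorially under the two $\bA_t$-algebra maps $\bA_t\to\bA_t$, $[X]\mapsto[X]s'$ and the "shear" $[1]\mapsto[1]$, $[X]\mapsto[X]+r[1]$. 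For (\ref{eqn:c}), $P^{[1]}(x,vs;t)=P^{[1]}(x,v;st)\,s$: apply naturality to the $\K$-algebra morphism $\bA_{st}\to\bA_t$ sending $[X]\mapsto s[X]$ (well-defined since $(s[X])^2=s^2[X^2]=s^2\cdot st[X]=st\cdot s[X]\cdot s$; more carefully, in $\bA_t$ one has $[X]^2=t[X]$, so $(s[X])^2=s^2 t[X]=(st)(s[X])$, matching the relation $Y^2=(st)Y$ of $\bA_{st}$), which sends $x[1]+v[X]\mapsto x[1]+vs[X]$ and hence translates to the required homogeneity.

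With (\ref{eqn:b}) and (\ref{eqn:c}) verified, Theorem~\ref{th:doublecat}(2) immediately yields the $C^1_\K$-law $\mathbf P$ with $P^\settt{1}(x,v;s,t)=(P_\K(x),P^{[1]}(x,v;st);s,t)=(P_{st}(x,v);s,t)$ after unwinding the identification, giving the displayed formula. For the final assertions: the tangent map is the case $t=0$, where $\bA_0=\K[X]/(X^2)$ is the dual numbers, so $TP=P_0$ is scalar extension by dual numbers; and $dP(x)\colon v\mapsto P_0(x,v)=P^{[1]}(x,v;0)$ is $\K$-linear by the already-proved Lemma that $d\mathbf f(x)$ is linear for any $C^1_\K$-law (it follows from (\ref{eqn:b}) and (\ref{eqn:c}) at $t=0$).

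The main obstacle I anticipate is purely bookkeeping: making the identification $V_{\bA_t}\cong V^2$ precise and checking that all the relevant $\K$-linear substitutions $[X]\mapsto \cdot$ are actual $\K$-\emph{algebra} morphisms between the $\bA_r$'s, i.e. that they respect the defining relations $Y^2=rY$ — the scalars $s,t,st$ must line up exactly, and one must be careful that $\ep_t\colon[X]\mapsto t$ lands in $\K$ rather than requiring invertibility of $t$. None of this is deep, but it is where an error could hide; everything else is a direct application of naturality of $P$ together with Theorem~\ref{th:doublecat}(2).
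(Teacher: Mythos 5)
Your overall strategy is the paper's: take the base map $P_\K$, define $P^{[1]}(x,v,t)$ as the $[X]$-component of $P_{t}(x,v)$ under the identification $V_t\cong V^2$, verify the difference-factorizer identity together with (\ref{eqn:b}) and (\ref{eqn:c}) by naturality of $P$ with respect to suitable algebra morphisms, and then invoke Theorem \ref{th:doublecat}(2). Two of your three naturality checks are correct and coincide in substance with the paper's: the factorizer identity via the two evaluations $[X]\mapsto 0$ and $[X]\mapsto t$ (these are exactly the paper's algebra morphisms $\pi_0,\pi_1:\K_t\to\K$), and (\ref{eqn:c}) via $[X]\mapsto s[X]$ (the paper's $\phi_{s,t}:\K_{st}\to\K_t$). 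A minor slip: $P^{[1]}(x,v;t)$ should be defined as $\pr_2 P_t(x,v)$, not $P_t(x,v)$ itself, as your later computation in fact assumes.

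The genuine gap is (\ref{eqn:b}). None of your three suggestions works as stated, and the only concrete one fails: the ``shear'' $[1]\mapsto[1]$, $[X]\mapsto[X]+r[1]$ is \emph{not} a $\K$-algebra endomorphism of $\bA_t=\K[X]/(X^2-tX)$, since $([X]+r[1])^2=(t+2r)[X]+r^2[1]$ while $t([X]+r[1])=t[X]+tr[1]$, and these agree only when $2r=0$ and $r^2=tr$. (Similarly, $[X]\mapsto s'[X]$ is not an endomorphism of $\bA_t$ but a morphism $\bA_{s't}\to\bA_t$ --- which is exactly why it proves (\ref{eqn:c}) rather than (\ref{eqn:b}).) There is a structural reason no endomorphism of $\bA_t$ can do this job: (\ref{eqn:b}) relates the values of $P_t$ at \emph{two} points, $(x+vt,v')$ and $(x,v)$, to its value at a third, so the required naturality must be taken with respect to a morphism \emph{out of a product algebra}. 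This is precisely what the paper supplies as Theorem \ref{th:pullback}: the set of $\ast$-composable pairs $\K_t\times_\K\K_t=\{((x',v'),(x,v))\mid x'=x+tv\}$ is a unital subalgebra of the direct product algebra $\K_t\times\K_t$, and the groupoid composition $((x',v'),(x,v))\mapsto(x,v'+v)$ is a $\K$-algebra morphism from it to $\K_t$. Naturality of $P$ with respect to this morphism (and the two product projections) yields $P_t(a'\ast a)=P_t(a')\ast P_t(a)$, which is (\ref{eqn:b}). That subalgebra-and-morphism verification is the one genuinely nontrivial input to the theorem, and it is the step your proposal is missing.
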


\begin{proof}
Let us prove that $P^\settt{1}$, defined as in the theorem, is a $C^1_\K$-law.
This is done by showing that all relevant maps are induced by algebra morphisms. Note first that
the two projections 
$$
\pi_0:\K_t \to \K , \, (x,v) \mapsto x, \qquad \pi_1:\K_t \to \K, \, (x,v) \mapsto x+tv
$$
are algebra morphisms, and they induce the two projections
$\pi_i := V_{\pi_i} : V_t \to V$.
Thus, from the definition of polynomial laws, we get $\pi_i \circ P_t = P_\K \circ P_i$, hence 
$$
P^{[1]} (x,v,t) := \pr_2 (P_t (x,v)) 
$$
is a difference factorizer for the base map $P_\K:V \to W$. 
Let us show that  $P^{[1]}$ satisfies (\ref{eqn:c}). 
For any $(s,t )\in \K^2$, the map $\phi_{s,t}:\K_{st} \to \K_t$, $(x,v) \mapsto (x,sv)$ is an algebra morphism, as is
immediately checked. 
It induces a map $\Phi_{s,t}:V_{st} \to V_t$, and
by definition of polynomial laws, we then have
$P_t \circ \Phi_{s,t}= \Phi_{s,t}\circ P_{ts}$.
The computation given in the proof of Theorem \ref{th:scal} shows that then (\ref{eqn:c}) holds.

\ssk
Let us  prove that $P^{[1]}$ satisfies (\ref{eqn:b}).
By Theorem \ref{th:pullback}, $\ast : \K_t \times_\K \K_t \to \K_t$ is an algebra morphism,
and this morphism induces the category law
$\ast : V_t \times_V V_t \to V_t$. 
By definition of a polynomial law, $P$ commutes with the induced maps, which means that
$P_t (a' \ast a) = P_t(a') \ast P_t(a)$, or, equivalently, that $P_t$ satisfies (\ref{eqn:b}).

\ssk
Finally, for $r=0$, $P_0$ is obtained by scalar extension with $\K_0 = \K[X]/(X^2)$.
\footnote{
The proof given here generalizes the one from \cite{Be14} proving linearity of $dP(x)$.
The proof of linearity given in 
\cite{Ro63} (and in \cite{Lo75}) is different,
using heavily the decomposition of a polynomial into homogeneous parts, which is not adapted
to the case $r \not= 0$.}
\end{proof}

\begin{remark}
Constant, linear, and bilinear $C^1$-laws obviously come from polynomial laws, in the way 
described by the theorem. 
\end{remark}

\begin{remark}[Formal laws]
In the second part of his long paper
\cite{Ro63}, Roby defines and investigates {\em formal laws} (``lois formelles''). They generalize formal
power series.  
We will show in subsequent parts of this work  that {\em formal laws are laws of class $C^\infty$}.
The underlying linear set is $(\{ 0 \}, V)$ (since $0$ is they only point where  all formal series converge).
% thus, in the present $C^1$-framework, they essentially correspond to giving some $1$-jet at $0$. 
\end{remark}

\section{Manifold laws of class $C^1$}\label{sec:mflaws}

\subsection{$C^1$-manifold laws over $\K$} 
Using the general principles from Appendix \ref{App:A}, subsets of $V$ can be glued together
by using a specified set of laws (``atlas law''):

\begin{definition}
Let $\K$ be a topological ring and $V$ a topological $\K$-module; we do not assume here that $\K^\times$ is dense in $\K$,
so in particular the discrete topology on $\K$ and $V$ is admitted.
A {\em $C^1_\K$-manifold law  modelled on $V$}, denoted by $\bf M$, is given by $C^1_\K$-laws
$(\mathbf{V},\cT_{\bf V},({\bf V}_{ij},\phi_{ij})_{(i,j) \in I^2} )$ in the sense of
Theorem \ref{th:reconstruct}, that is, base sets and base maps of these data form ``gluing data'' as described in that theorem,
and likewise for the other components of the laws. 
For each $U = V_{ij}$,
the topology on $U^\sett{1}$ shall be the initial topology with respect
to $\pi_1,\pi_0:U^\sett{1} \to U$. 
The manifold law is called {\em handy} if the base manifold $(V,(V_{ij},\phi_{ij}))$ is handy in the sense of Definition
\ref{def:handy}.

A {\em $C^1_\K$-law} $\bff$ between manifold laws $\bf M$ and $\bf N$
is given by a family of $C^1_\K$-laws $\bff_{ij}$ such that all components of the law are is an Theorem \ref{th:reconstruct}. 
\end{definition}

\begin{theorem}
Let $\bf M$ be a manifold law over $\K$, modelled on $V$.
Then there are primitive manifolds $M^\sett{1}$, $M^\settt{1}$ together with projections and  injections fitting into 
the diagram
$$
\begin{matrix}
M^\settt{1}   &  \stackrel \pi \rightrightarrows & M \times \K \times \K  & \rightarrow & \K^2 \cr
{ }_\partial   \downdownarrows \phantom{\pi}  & & \downdownarrows   { }_\partial
& & \downdownarrows \cr
M^\sett{1}    & \stackrel \pi \rightrightarrows & M \times \K & \rightarrow &  \phantom{\, . } \K \,  .
%\cr  & & \downarrow & & \cr  & & M & & \cr
 \end{matrix} 
 $$
There are also partially defined products $\ast$ and $\bullet$ defining,
if the manifold law is handy, the structure of a double category on $M^\settt{1}$.   
If $\bf N$ is another manifold law over $\K$, modelled on $W$, then $C^1_\K$-laws $\bff$ 
between $\bf M$
and $\bf N$ are precisely the morphisms of the structure defined by projections, injections and partially defined laws
$\ast$ and $\bullet$. 
\end{theorem}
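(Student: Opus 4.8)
The strategy is to reduce everything to the already-established local statements in Theorems \ref{th:FirstGroupoid}, \ref{th:doublecat} and to the gluing machinery of Theorem \ref{th:reconstruct}. The manifold law $\bf M$ is given by the local data $(\mathbf{V}_{ij},\phi_{ij})$, and by definition the base maps $\phi_{ij}:V_{ij}\to V_{ji}$ form gluing data of a (topological) manifold $M$ modelled on $V$. Applying the construction $(-)^\sett{1}$ and $(-)^\settt{1}$ to the base sets $V_{ij}$ and, by Theorem \ref{th:doublecat}(2) (and Theorem \ref{th:morph1}), to the transition laws $\phi_{ij}^\sett{1}$, $\phi_{ij}^\settt{1}$, we obtain new gluing data $((V_{ij})^\sett{1},(\phi_{ij})^\sett{1})$ and $((V_{ij})^\settt{1},(\phi_{ij})^\settt{1})$; here one must check the cocycle condition $\phi_{jk}^\sett{1}\circ\phi_{ij}^\sett{1}=\phi_{ik}^\sett{1}$ on overlaps, which follows from the functoriality of the prolongation (the chain rule, Equation (\ref{eqn:t-functor}), applied to laws) together with the fact that each $\phi_{ij}^\sett{1}$ is determined, as a morphism of the first kind, by $\phi_{ij}$ and a chosen difference factorizer which is itself part of the law data. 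Feeding these data into Theorem \ref{th:reconstruct} produces the primitive manifolds $M^\sett{1}$ and $M^\settt{1}$.

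\textbf{Projections, injections, and the $\K$-factors.} The source/target projections $\pi_0,\pi_1:(V_{ij})^\sett{1}\to V_{ij}\times\K$, the $\partial$-projections $(V_{ij})^\settt{1}\to(V_{ij})^\sett{1}$, the unit sections $z$, and the maps to $\K$ and $\K^2$ are all natural transformations that commute with the transition laws (this commutation is exactly the content of the lemmas preceding Theorem \ref{th:doublecat}, transported along $\phi_{ij}$). Hence, again by Theorem \ref{th:reconstruct}, they glue to globally defined maps fitting into the stated diagram. The topology on $M^\sett{1}$ is the one prescribed in the definition (initial with respect to $\pi_0,\pi_1$), and it agrees locally with the initial topology on each $(V_{ij})^\sett{1}$ because the $\pi_\sigma$ are compatible with transitions; continuity of all the structure maps is then automatic.

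\textbf{The partially defined products.} On each chart $(V_{ij})^\settt{1}$ we have the laws $\ast$ and $\bullet$ of Theorem \ref{th:doublecat}(1). By Theorem \ref{th:morph1} and Theorem \ref{th:doublecat}(2), each $\phi_{ij}^\settt{1}$ is a morphism of double categories, so it intertwines $\ast$ and $\bullet$; therefore the locally defined products agree on overlaps and glue to partially defined maps $\ast:M^\settt{1}\times_{M\times\K^2}M^\settt{1}\to M^\settt{1}$ and $\bullet$ (and likewise at the $M^\sett{1}$ level). To see that these make $M^\settt{1}$ into a genuine (small, topological) double category when $\bf M$ is handy, one argues exactly as in the proof of Theorem \ref{th:Manifold-one}(2): given composable elements $a,a'$, the handiness (Definition \ref{def:handy}, via Lemma \ref{la:handy}) lets one find a single chart domain containing all the base points $\pi_0(a)$, $\pi_1(a)=\pi_0(a')$, $\pi_1(a')$ (and, for the $\bullet$-composition and the interchange law, the four relevant base points simultaneously), so the chart-independent local computation of Theorem \ref{th:doublecat}(1) gives the required associativity, units, inverses for $\ast$, and the interchange law globally. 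Finally, for the morphism statement: a $C^1_\K$-law $\bff$ between $\bf M$ and $\bf N$ is by definition a family of local laws $\bff_{ij}$ compatible with the transition data; by Theorem \ref{th:doublecat}(2) each $\bff_{ij}$ is precisely a morphism of the double categories $(V_{ij})^\settt{1}\to(W_{ij})^\settt{1}$, and by Theorem \ref{th:reconstruct} a compatible family of such glues to a morphism of the glued structures — and conversely.

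\textbf{Main obstacle.} The routine part is the bookkeeping of the compatibility lemmas; the genuinely delicate point is the handiness argument for the global definition and well-definedness of $\ast$ and $\bullet$ — in particular, verifying that for the interchange law one really can place all four base points in one chart, and that the resulting glued $\bullet$ is still a (possibly non-unital in the algebra-law setting, but here unital) category law rather than merely a local one. This is the only place where the Hausdorff-type "handy" hypothesis is used in an essential way, mirroring the footnote to Theorem \ref{th:Manifold-one}.
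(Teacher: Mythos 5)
Your proposal is correct and follows essentially the same route as the paper: construct $M^\sett{1}$ and $M^\settt{1}$ by applying the prolongation functors to the local gluing data and invoking Theorem \ref{th:reconstruct}, transport the projections, injections and partially defined products chart by chart (they agree on overlaps because the transition laws are morphisms of double categories), and use the handiness assumption exactly as in the proof of Theorem \ref{th:Manifold-one} to place the base points of composable elements in a single chart. The paper's proof is merely a terser version of this, delegating the details to the proofs of Theorems \ref{th:doublecat} and \ref{th:Manifold-one}, which you have correctly unpacked.
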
 

\begin{proof}
Existence of the sets $M^\sett{1}$ and $M^\settt{1}$, as well as of the projections and injections, follows directly from the
above definition combined with Theorem \ref{th:reconstruct}. 
As in the proof of Theorems \ref{th:doublecat}  and \ref{th:Manifold-one}, the same holds for the definition of locally
defined products $\ast$ and $\bullet$. As in these proofs, the only point that needs attention is when checking if
$M^\sett{1}\times_{(M \times \K)} M^\sett{1}$ is stable under $\ast$: for this, we infer the assumption that the manifold law
is handy, as in the proof of Theorem \ref{th:Manifold-one}.
\end{proof}

\begin{remark}
In the general case (handy or not), we obtain a {\em local small double category} $M^\settt{1}$. 
To avoid technicalities, we do not give formal definitions here (see, e.g.,  \cite{Ko97}):
the groupoid law is no longer defined on all of $M^\sett{1}\times_{(M \times \K)} M^\sett{1}$, but only on an open neighborhood of
the zero section;  and similarly for $M^\settt{1}$.
\end{remark} 

\subsection{Towards more general categories of spaces}
We postpone the general theory of manifold laws to subsequent parts of this work. 
In guise of a conclusion,
let us, however, already mention that this very general category of
manifolds still has the drawbacks that the category of usual manifolds already has:
(1) the lack of {\em inverse images}, (2) it is {\em not cartesian closed}:

\begin{remark}[Inverse images]
In general, the inverse image of a small [double] category (and of a [double] groupoid) under a morphism is again
a small [double] category (resp.\ a [double] groupoid).
Therefore, if $\bff:{\bf M} \to {\bf N}$ is a morphism, and $c \in N$ a fixed element,
then the inverse image of the ``isolated point $c$'' (sub-double category ${\bf c}:= \{
(c, 0;s,t) \mid s,t \in \K \}  \cong \K^2$)  in  $\bf N$ under
$\bff$ is again a sub-double category of $\bf M$. 
Explicitly, on the chart level,
 if $\bff:U^\settt{1} \to W^\settt{1}$ is a law and $c \in W$ a fixed element, the inverse image
\begin{align}
\bff\inv({\bf c}) & = \{ (x,v;s,t) \in U^\settt{1} \mid  f^\settt{1}(x,v;s,t) = (c , 0 ;s,t) \}
\cr
& = \{ (x,v;s,t) \in U^\settt{1} \mid f(x)=c, \, f^{[1]}(x,v,st) = 0 \}
\end{align}
is a sub-double category. 
However, $\bff\inv({\bf c})$ will in general not be a manifold: it may be a ``singular space''. 
\end{remark}

\begin{remark}[Cartesian closedness]
If $M$ and $N$ are usual manifolds, then the set $C^1(M,N)$ of $C^1$-morphisms from $M$ to $N$
is in general not a manifold.
The same problem arises for any other kind of manifolds.
On the other hand, the space of mappings from $U$ to a $\K$-module $W$ is always a linear space, with pointwise
defined structure, having the additive maps as subspace (this is true for {\em commutative} groups $(W,+)$ and
fails for general groups). The following result can be interpreted by saying that the ``locally linear maps''
share this property (and the proof shows that commutativity of $(W,+)$ enters here in the same way): 
\end{remark}

\begin{theorem}[The double category structure on the set of $C^1$-laws] $ $

\begin{enumerate}
\item
The set $\Hom_\K(U^\sett{1},W^\sett{1})$ of groupoid morphisms of the first kind between $U^\sett{1} $ and $W^\sett{1}$
carries a natural groupoid structure, namely, the {\em pointwise groupoid structure} inherited from $W^\sett{1}$.
\item
If  $\K$ is commutative,
then the set  $C^1_\K(U,W)$ of $C^1$-laws from $U$ to $W$ carries a natural structure of small double category,
given by the pointwise structure.
In particular, for $U=W$ and $t=0$,
we get the natural linear structure on the {\em space of vector fields}.
\end{enumerate}
\end{theorem}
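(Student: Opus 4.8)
The plan is to exploit the fact that pointwise operations on maps into an algebraic structure of a given type again form a structure of that type, provided the fibers of the codomain have commutative addition. I would first make precise what ``pointwise'' means in each case. For part (1): an element of $\Hom_\K(U^\sett{1},W^\sett{1})$ is, by Theorem \ref{th:morph1} together with the normalization in the definition of ``of the first kind'', the same as a difference factorizer $f^{[1]}:U^\sett{1}\to W$ of some base map $f:U\to W$ satisfying \eqref{eqn:b}. Two such data $(f,f^{[1]})$ and $(g,g^{[1]})$ are declared \emph{composable} exactly when their base maps are ``composable'' in the pair-groupoid sense over $U$ —- here one must decide the right notion; the natural choice is to view $\Hom_\K(U^\sett{1},W^\sett{1})$ as fibered over the set of base maps $f:U\to W$ via $f^\sett{1}\mapsto f$, with no composition structure on the base (so the base is a set, i.e.\ the ``object set'' of the groupoid is just $\mathrm{Map}(U,W)$ and all arrows are endo-arrows), and define $f^{[1]}\ast g^{[1]}$ only when $f=g$, by the fiberwise groupoid law of $W^\sett{1}$. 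Concretely, for fixed $(x,v,t)$ the values $f^{[1]}(x,v,t)$ and $g^{[1]}(x,v,t)$ live in the $\ast$-groupoid $W_t$ over the same base point $f(x)=g(x)\in W$; composing them in $W_t$ —- i.e.\ adding the $W$-components, $(f\ast g)^{[1]}(x,v,t):=f^{[1]}(x,v,t)+g^{[1]}(x,v,t)$ when $f=g$ —- and checking that the result still satisfies \eqref{eqn:b} and the normalization \eqref{eqn:a} is a one-line computation using that addition in $W$ is used coordinatewise. The groupoid axioms (associativity, units given by the zero factorizer $0$, inverse given by $-f^{[1]}$) then follow immediately from those of $(W,+)$.

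For part (2) the same idea applies one dimension up. An element of $C^1_\K(U,W)$ is, by the definition in Section \ref{sec:Law}, a base map $f$ together with a difference factorizer $\bff^{[1]}$ satisfying \eqref{eqn:b} and \eqref{eqn:c}; equivalently the double functor $\bff^\settt{1}:\bfU^\settt{1}\to\bfW^\settt{1}$. I would define the four constituent sets of the putative double category $C^1_\K(U,W)$ to be: $C_{00}=\mathrm{Map}(U,W)$; $C_{01}$ the set of pairs $(f,\varphi)$ where $\varphi$ encodes a ``scaling direction'' over $f$ —- but in fact, tracking the $(s,t)$-bookkeeping of $\bfW^\settt{1}$, the cleanest route is to mimic verbatim the four-set/two-law diagram \eqref{eqn:diagram}, replacing $W^\settt{1}$, $W^\sett{1}$, $W\times\K^2$, $W\times\K$ by their ``$\mathrm{Map}$-from-$U$'' analogues, with $\ast$ and $\bullet$ defined pointwise in the value. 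The two laws are: $\bff^\settt{1}\ast\bff'^\settt{1}$ given at each $(x,v;s,t)$ by $\ast$ in $\bfW^\settt{1}$ (which, as computed in the boxed formulas of Section \ref{sec:Scalar}, adds the $v$-components), and $\bff^\settt{1}\bullet\bff'^\settt{1}$ given pointwise by $\bullet$ in $\bfW^\settt{1}$ (which multiplies the scalar components $s$). One then has to verify: (a) each pointwise composite again satisfies \eqref{eqn:b}, \eqref{eqn:c} and hence lies in $C^1_\K(U,W)$ —- this uses only that the defining equations are preserved under coordinatewise addition in $W$, which needs $(W,+)$ abelian; (b) the nine axioms of Theorem \ref{th:smalldoublecats}, including the interchange law \eqref{eqn:interchange}, which all reduce, pointwise in $(x,v;s,t)$, to the corresponding axioms already established for $\bfW^\settt{1}$ in Theorem \ref{th:doublecat}(1). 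For the last sentence, specializing $U=W$ and $t=0$: the $t=0$ fiber of this double category is, by Theorem \ref{la:isos}(1) applied pointwise, the additive group bundle, and its morphisms of the first kind over $\id_U$ with $f=\id$ are exactly maps $x\mapsto(x,\xi(x))$ with $\xi:U\to W$ arbitrary (no constraint survives at $t=0$ beyond \eqref{eqn:b}, \eqref{eqn:c} which become $\K$-linearity-in-$v$ of the \emph{differential}, not of $\xi$), i.e.\ the space of $W$-valued vector fields on $U$ with its pointwise $\K$-module structure.

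The main obstacle I anticipate is bookkeeping rather than depth: getting the four sets $C_{ij}$ of the alleged double category honestly right, in particular disentangling which scalar data $(s,t)$ are ``objects'' and which are ``arrows'' once one passes to function spaces, and making sure the composability conditions ($\pi_1=\pi_0$ for $\ast$, $\partial_1=\partial_0$ for $\bullet$) translate correctly —- since the base map $f$ is now itself a ``point'' in $C_{00}$, one has to be careful that two laws $\bff,\bff'$ are $\ast$-composable iff they have the same base map, and similarly for $\bullet$. Once the dictionary is fixed, every axiom verification is ``the same computation as in Theorem \ref{th:doublecat}, carried out pointwise,'' so I would state the reduction once and not grind through all nine axioms. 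The one genuinely substantive remark to make explicit is \emph{where commutativity of $(W,+)$ is used}: it enters precisely in step (a) above, because closing \eqref{eqn:b} under the pointwise law requires rearranging sums of the form $f^{[1]}(\cdot)+g^{[1]}(\cdot)$ into $(f^{[1]}+g^{[1]})(\cdot)$ independently of order —- exactly as the paper flags in the Cartesian-closedness remark preceding the theorem.
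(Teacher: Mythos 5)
There is a genuine gap in your part (1), and it propagates to part (2). You declare $\bff,\bfg$ composable as soon as their base maps agree, and define the composite by adding the difference factorizers, on the grounds that for fixed $(x,v,t)$ the arrows $(f(x),f^{[1]}(x,v,t))$ and $(f(x),g^{[1]}(x,v,t))$ ``live in $W_t$ over the same base point''. But two arrows of a groupoid with the same \emph{source} are not composable; one needs the target of one to equal the source of the other. Here the target of $(f(x),g^{[1]}(x,v,t))$ in $W_t$ is $f(x)+g^{[1]}(x,v,t)\,t=f(x+vt)$, so the pointwise composite exists only when $f(x+vt)=f(x)$ for all $(x,v,t)\in U^\sett{1}$. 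Equivalently: the sum of two difference factorizers of the same $f$ satisfies $\bigl(f^{[1]}+g^{[1]}\bigr)(x,v,t)\cdot t=2\bigl(f(x+vt)-f(x)\bigr)$, i.e.\ it is a difference factorizer of $2f$, not of $f$; the set of difference factorizers of a fixed non-constant $f$ is a torsor, not a group, so your operation does not close. The paper instead takes the honest pointwise source and target, $\pi_\sigma(\bff):=\pi_\sigma\circ\bff$, with composability condition $\pi_1\circ\bfg=\pi_0\circ\bff$; this forces $f=g$ \emph{and} $f(x+vt)=f(x)$, so most pairs are simply not composable --- except in the $t=0$ fibre, where $W_0$ is a group bundle and your picture is correct, which is why your concluding remark about vector fields still stands.

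You also skip the one step of the argument that is not pure bookkeeping: showing that the pointwise composite of two groupoid \emph{morphisms} is again a morphism. Expanding $(\bff\ast\bfg)(a'\ast a)=\bff(a')\ast\bff(a)\ast\bfg(a')\ast\bfg(a)$, one must commute $\bff(a)$ past $\bfg(a')$ inside $W^\sett{1}$; the paper verifies that these are endomorphisms of a common object and invokes commutativity of the vertex groups, i.e.\ of $(W,+)$. You do flag commutativity of $(W,+)$, but you locate it in the (ill-posed) closure of the set of difference factorizers under addition. Likewise, for part (2) you never identify where commutativity of $\K$ --- the actual hypothesis of the statement --- enters: it is needed for the analogous commutation of endomorphisms in the $\bullet$-categories, whose vertex monoids are $(\K,\cdot)$.
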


\begin{proof} (1)
Let $\bff,\bfg \in \Hom_\K(U^\sett{1},W^\sett{1})$.
The two projections are $\bff \mapsto \pi_i(\bff):= \pi_i \circ \bff$.
Assume $\pi_1(\bfg) = \pi_0(\bff)$, that is,
\begin{equation}\label{eqn:wd}
\forall  c \in U^\sett{1}: \qquad \pi_1 (\bfg (c)) = \pi_0 (\bff (c)) ,
\end{equation}
 and define a map by  ``pointwise product''
$\bff \ast \bfg: U^\sett{1} \to W^\sett{1}$, $a \mapsto \bff(a) \ast \bfg(a)$.
Because of (\ref{eqn:wd}) this is well-defined.
We show that $\bff \ast \bfg$ is again a groupoid morphism:
let $a',a \in U^\sett{1}$ such that $\pi_1(a) = \pi_0(a')$, so $a' \ast a$ is defined, hence
\begin{align*}
(\bff \ast \bfg) (a' \ast a) & = \bff (a' \ast a) \ast \bfg (a' \ast a) 
 = \bff (a') \ast \bff(a) \ast \bfg(a') \ast \bfg (a) .
\end{align*}
From (\ref{eqn:wd}) with $c = a' \ast a$ we get
$$
\pi_1 \bfg (a') =\pi_1 \bfg (a' \ast a) = \pi_0 \bff (a' \ast a) = \pi_0 \bff (a) .
$$
On the other hand, since $\bff$ is a morphism, from $\pi_1 (a) = \pi_0(a')$, it follows with (\ref{eqn:wd}),
$$
\pi_1 \bff (a) = \pi_0 \bff (a') = \pi_1 \bfg (a'),
$$
so that $\pi_0 \bff (a)= \pi_1 \bfg (a')  = \pi_1 \bff(a) = \pi_0 \bfg (a')$.
Thus both $\bff(a)$ and $\bfg(a')$ are {\em endo}morphisms of the same object.
Now, since $(W,+)$ is {\em commutative}, endomorphisms of the same object commute:
\begin{equation}
(x,v',t) \ast (x,v,t) = (x,v' + v ,t) = (x,v+v',t) = (x,v,t) \ast (x,v',t) ,
\end{equation}
and hence we get 
\begin{align*}
(\bff \ast \bfg) (a' \ast a) & = \bff (a') \ast \bfg(a') \ast \bff(a)  \ast \bfg (a) 
 = (\bff \ast \bfg)(a') \ast (\bff \ast \bfg) (a) ,
\end{align*}
proving that $\bff \ast \bfg$ is again a morphism.
Thus $\Hom_\K(U^\sett{1},W^\sett{1})$ is stable under the pointwise structures, and by general principles,
we get again a structure of the same type, that is, a groupoid.
If $\K$ is commutative, then endomorphisms in the $\bullet$-categories also commute with each other, 
so that the same arguments as above imply that the pointwise product $\bff \bullet \bfg$ belongs
to a well-defined category structure on $\Hom_\K(\bfU,\bfW)$, which together with the $\ast$-structure
defined before forms a double category, proving (2). 
\end{proof}

Summing up, there ought to be  some (big) cartesian closed
category of local double categories containing $C^1$-manifold laws and their inverse images.
This category would, then, be a good candidate for a general notion of ``space laws''.  
We come back to this issue as soon as the general $k$-th order theory is developed.

\appendix

\section{Categories, groupoids}\label{App:Cat}

%general: \url{http://ncatlab.org:8080/nlab/}

\subsection{Concrete categories}
Formally, a 
\href{https://en.wikipedia.org/?title=Concrete_category}{\em concrete category} (abridged: ccat) is defined as a category together with a faithful
functor to the category of sets. For our purposes, concrete categories are just a piece of language, and 
we rather think of a concrete category as given by a certain ``type $\mathcal T$ of structure
defined on sets'':
objects are ``sets with structure of type $\mathcal T$'', 
and morphisms are  ``maps preserving structure''. We will denote such concrete categories by
underlined roman letters, for instance

\begin{itemize}
\item
the ccat $\ul{\rm Vect}_\K$ of all $\K$-vector spaces (with linear maps as morphisms),
\item
the ccat $\ul{\rm Top}$ of all topological spaces (with continuous maps),
\item
the ccat $\ul{\rm Man}_\K$ of  smooth manifolds over $\K$ (with smooth maps),
\item
the ccat $\ul{\rm Grp}$ of all groups,  its subcat $\ul{\rm Cgroup}$ of all commutative groups,
\item
the ccat $\ul{\rm Ring}$ of all rings, its subcat $\ul{\rm Field}$ of all fields,
\item
the ccat $\ul{\rm Alg}_\K$ of all (associative) $\K$-algebras, 
\item 
the ccat $\ul{\rm Cat}$ of all small cats (see below)
and its subcat $\ul{\rm Goid}$ of all groupoids,
\item
the ccat $\ul{\rm Set}$ of all sets (with arbitrary set-maps). 
\end{itemize}

\subsection{Small categories}
A {\em small category} (abridged: small cat) is given by a pair of sets $(B,M)$,
$B$ called the {\em set of objects} and $M$ called the {\em set of morphisms}, 
together with two maps ``source'' and  ``target'' $\pi_0,\pi_1:M \to B$
(we shall write $\pi_\sigma$, where $\sigma$ takes two values; instead of $0$ and $1$ one may also use the values
$s$ et $t$, or $+$ and $-$, or others), and map $z: B \to M$ ``zero section'' or ``unit section'', and finally 
 a binary
composition $g \ast f$, defined for $(g,f) \in M \times_B M$ where
\begin{equation}\label{eqn:Order}
M\times_B M := M\times_{B,\pi} M:= \{ (g,f) \in M \times M \mid \, \pi_0(g) = \pi_1(f) \} ,
\end{equation}
such that these data satisfy the following properties: 
\begin{equation} 
 \pi_1(g \ast f) = \pi_1(g), \qquad   \pi_0(g \ast f) = \pi_0(f) ,
\end{equation} 
and the law $\ast$ is {\em associative}: whenever $(h,g)$ and $(g,f)$ are in $M \times_B M$, then
\begin{equation}
(h \ast g) \ast f = h \ast (g \ast f) .
\end{equation}
Moreover,   $z: M \to B$ is a {\em bisection} (that is, $\pi_\sigma  \circ z = \id_B$ for $\sigma=0,1$), 
 such that
\begin{equation}
z(\pi_1(f)) \ast f = f, \qquad g \ast z(\pi_0(g)) =g .
\end{equation} 
The small cat will be denoted by 
$(B,M,\pi_\sigma,s,\ast)$, or, shorter,
$(M \downdownarrows B, \ast)$ or $(B,M)$. 
Especially in Part II it will be useful to write $(C_0,C_1)$ instead of $(B,M)$, and to call the disjoint union
$C := C_0 \dot\cup C_1 = B \dot\cup M$ the {\em underlying set} of the small cat.

\ssk
By  a {\em bundle of small cats} $M \rightrightarrows B \rightarrow I$ we just mean 
an indexed family $(M_i,B_i)$ of small cats indexed by a set $I$.
The total space $(M,B)$ is then again a small cat.

\subsection{Functors}
A {\em morphism between small cats} $(M \downdownarrows B,\ast)$, $(M' \downdownarrows B',\ast')$, or {\em (covariant) functor},
 is a pair of maps $(F:M \to M',f:B\to B')$ such that
\begin{equation}
f \circ \pi_0 = \pi_1' \circ F, \qquad
F \circ z = z' \circ f, \qquad
F(h) \ast' F(g) = F(h \ast g) . 
\end{equation}
It is obvious that the composition of functors is again a functor, and that the identity $\id_C$ of the underlying set
$C = M \dot\cup B$ is a functor.
Identifying $(F,f)$ with $f \dot\cup F:C \to C'$, 
small cats form a concrete category $\ul{\rm{Cat}}$.

\subsection{Groupoids} Assume $(B,M)$ is a small cat.
An element $f \in M$ is {\em invertible} if there is another one, $g$, such that
$\pi_0(g) = \pi_1(f)$ and $\pi_1(g)=\pi_0(f)$ and $g \ast f = z (\pi_0(f))$ and
$f \ast g = z(\pi_0(g))$.
By standard arguments, such a $g$ is unique. It is then called the {\em inverse of $f$}
and denoted by $g = f\inv$.
A {\em groupoid} is a small category in which {\em every} $f\in M$ is invertible.
Groupoids and functors form a concrete category $\ul{\rm Goid}$. 
For every groupoid, the {\em inversion map} $i:M \to M$, $f \mapsto f\inv$, together with $\id_B$, is an isomorphism onto the
opposite groupoid. 
See, e.g., \cite{Ma05}, for more information on groupoids. 

\ssk
A {\em group bundle} is a groupoid such that $\pi_1 = \pi_0$. Then the fibers of $\pi_\sigma$ are groups.

\begin{example}[The pair groupoid]\label{ex:pairgroupoid} %fr: groupoide des paires
If $A$ is a set, then
$(A \times A, \downdownarrows, A, z,\circ)$ with $\pi_0(x,y)=y$, $\pi_1(x,y)=x$,
$z(x)=(x,x)$ and
$(x,y) \circ (y,z) = (x,z)$ is a groupoid, called the {\em pair groupoid} of $A$.
The inverse of $(x,y)$ is $(y,x)$.
If $R \subset (A \times A)$, then $(R,A,z,\circ)$ is a subgroupoid of the pair groupoid if, and only if,
$R$ is an equivalence relation on $A$. 
\end{example}

\begin{example}[The anchor morphism]
For a category $(\pi:M \downdownarrows B,z,\ast)$, the {\em anchor}
\begin{equation}
\kappa : M \to B\times B, \, a \mapsto (\pi_1(a),\pi_0(a)),\qquad 
B \to B, \, a \mapsto a
\end{equation}
is a morphism of $(M,B)$ to the pair groupoid. Indeed, if $\pi_1(a) = \pi_0(a')$,
$$
\kappa (a' \ast a) = (\pi_1(a'\ast a),\pi_0(a'\ast a)) = (\pi_1(a'),\pi_0(a)) =
%(\pi_1(a'),\pi_0(a')) \circ (\pi_1(a),\pi_1(a)) =
\kappa(a') \circ \kappa(a).
$$
\end{example}

\subsection{Opposite category,  and notation}
The {\em opposite category (resp.\ groupoid)} of a category (resp.\ groupoid)  $(\pi_0,\pi_1, M,B,\ast,z)$ 
 is $(\pi_0^{op},\pi_1^{op},M,B,\ast^{op},z)$, where
\begin{equation}
\pi_0^{op}:=\pi_1, \qquad 
\pi_1^{op}:=\pi_0, \qquad
g \ast^{op} f:= f \ast g.
\end{equation} 
Note that each groupoid is isomorphic to its opposite groupoid, via the inversion map (but a category needs not be
isomorphic to its opposite category).
A {\em contravariant functor} between categories is  a morphism to the opposite category.
We are aware that several authors use other conventions concerning notation of the product.\footnote{
See \cite{BHS11}, p. 145 Rk 6.1.1 and 
p.\ 556: ``The first notation is taken from the composition of maps and the second is more algebraic.
... we have used both...''.  Our convention, given by
 (\ref{eqn:Order}), follows the conventions that seem to be most common,
see, e.g., \url{http://ncatlab.org:8080/nlab/show/category} and
\url{http://ncatlab.org/nlab/show/opposite+category}.
}
However, once a convention is fixed, it should be kept.
%However, formally, using the ``opposite product''  does not describe a category, but some ``dual'' concept
%(where (\ref{eqn:Order}) is replaced by introducing a product $g \diamond f$ defined if
%$\pi_0(f) = \pi_1(g)$).
%In view of a possible categorical approach to super-calculus, such seemingly purely formal considerations should not be neglected.
%
%Indeed, in subsequent work we will investigate this point in full detail: 
%at a first glance, changing conventions is just a linguistic trick, calling 
%$\pi_0$ ``target'' and to call $\pi_1$ ``source'', without changing the products. 
%Put differently, we exchange the indices $0$ and $1$. But this has consequences for the higher order
%categories derived from the conventions, and we believe that {\em super-calculus} is precisely 
%what rules the changes resulting from this. 

\subsection{Pregroupoids}\label{rk:pregroupoids}
In every groupoid we may define a ternary product 
\begin{equation}
[a'',a',a]:= a'' \ast (a')\inv \ast a,
\end{equation}
 whenever
$\pi_1(a)=\pi_1(a')$ and $\pi_0(a'')=\pi_0(a')$. 
In Part II we will need the notion of  {\em pregroupoid} (cf.\ \cite{Ko10}, see also \cite{Be14b}):

\begin{definition}
A {\em pregroupoid} is given by a set $M$ together with two surjective maps $a:M \to A$, $b:M \to B$ and a
partially defined {\em ternary product map}
$$
M \times_a M \times_b M \to M, \quad (x,y,z) \mapsto [xyz] 
$$
where
$$
M \times_a M \times_b M = \{ (x,y,z) \in M^3 \mid \, a(x)=a(y), \, b(y)=b(z) \} ,
$$
such that these data satisfy
$$
\forall (x,y,z) \in M \times_a M \times_b M : \quad
a([xyz])= a(x), \, b([xyz])=b(z),
$$
and the {\em para-associative} and the {\em idempotent law} hold:

\ssk
{\rm (PA)}
$\quad [x[uvw]z]=[[xwv]uz]=[xw[vuz]]$,

{\rm (IP)}
$\quad \, [xxz] = z = [zxx]$.
\end{definition}

\nin
Note that such structure only depends on the equivalence relations of fibers defined by $a$ and $b$, hence the sets $A$ and $B$
may be eliminated from the definition by considering $a,b$ just as equivalence relations on $M$ (as done in \cite{Be14b}). 
A {\em morphism of pregroupoids} is given by a map $f:M \to M'$ sending fibers of $a$ to fibers of $a'$ and fibers of $b$ to fibers of 
$b'$ and preserving the ternary product: $f[xyz]=[fx,fy,fz]$.
Obviously, pregroupoids and their morphisms form a concrete cat $\ul{\rm Pgoid}$.

\begin{theorem}
The ccat of groupoids is equivalent to the ccat of pregroupoids together with a
fixed bisection.
\end{theorem}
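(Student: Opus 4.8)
The plan is to exhibit mutually inverse concrete functors between $\ul{\mathrm{Goid}}$ and the concrete category $\ul{\mathrm{Pgoid}}^{\mathrm{bis}}$ whose objects are pregroupoids $(M,a,b,[\cdot\,\cdot\,\cdot])$ equipped with a distinguished \emph{bisection} --- a subset $E \subseteq M$ on which $a$ and $b$ restrict to bijections onto $A$, resp.\ $B$ --- and whose morphisms are pregroupoid morphisms carrying the distinguished bisection into the distinguished bisection; the bisection plays for a pregroupoid the role a basepoint plays for a torsor. For the first functor, I would send a groupoid $(B,M,\pi_0,\pi_1,z,\ast)$ to the pregroupoid with $a:=\pi_0$, $b:=\pi_1$, ternary product $[x,y,z]:=x\ast y\inv\ast z$ as in \ref{rk:pregroupoids}, and distinguished bisection $E:=z(B)$. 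Here $E$ is a bisection since $\pi_\sigma\circ z=\id_B$; axiom (PA) holds because, using associativity of $\ast$ and $(g\ast f)\inv=f\inv\ast g\inv$, each of the three expressions equals $x\ast w\inv\ast v\ast u\inv\ast z$; and (IP) holds because $[xxz]=x\ast x\inv\ast z=z$ and $[zxx]=z\ast x\inv\ast x=z$. A groupoid functor preserves $\pi_\sigma$, $z$, $\ast$ and hence inversion, so it is a bisection-preserving pregroupoid morphism; functoriality of the assignment is clear.

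For the functor in the other direction, I would start with $(M,a,b,[\cdot\,\cdot\,\cdot])$ together with a bisection $E$. The bijections $a|_E$, $b|_E$ yield an identification $A\cong E\cong B$; writing $B$ for this common set and $e:B\to M$ for the section with image $E$, I would build a groupoid with morphism set $M$, source/target $a,b$, unit section $e$, composition $g\ast f:=[\,g,\,e(u),\,f\,]$ for the unique object $u$ making this composable, and inverse $f\inv:=[\,e(\,\cdot\,),\,f,\,e(\,\cdot\,)\,]$. Compatibility of $\ast$ with source and target is exactly the output clause $a([xyz])=a(x)$, $b([xyz])=b(z)$ of the pregroupoid axioms; associativity of $\ast$ is the special case of (PA) in which the two ``middle'' arguments are bisection elements; and the unit and inverse laws reduce, after a single rewriting by (PA), to instances of (IP). A pregroupoid morphism respecting the distinguished bisections carries unit sections to unit sections and preserves $[\cdot\,\cdot\,\cdot]$, hence is a functor, and the assignment is functorial.

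Next I would check that the two composites are the identity, working on underlying sets. Starting from a groupoid and returning, the base, the projections and the unit section are literally unchanged, while $[\,g,\,z(c),\,f\,]=g\ast z(c)\inv\ast f=g\ast z(c)\ast f=g\ast f$ because $z(c)$ is self-inverse and neutral (and similarly for inversion), so the composition is recovered. Starting from a pregroupoid-with-bisection and returning, the recovered bisection is the image of the recovered unit section, namely $e(B)=E$, and the recovered ternary product is $x\ast_{\mathrm{new}}y\inv_{\mathrm{new}}\ast_{\mathrm{new}}z$; substituting the formulas for $\ast_{\mathrm{new}}$ and $(\cdot)\inv_{\mathrm{new}}$ and collapsing the nested ternary products by repeated use of (PA) and (IP) brings this back to $[x,y,z]$. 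As both passages act as the identity on the maps between underlying sets, they are mutually inverse on morphisms too, so we obtain an isomorphism, a fortiori an equivalence, of concrete categories.

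I expect the real work to sit in the second paragraph --- deducing associativity together with the unit and inverse laws of the reconstructed groupoid purely from (PA) and (IP) --- and in recovering $[\cdot\,\cdot\,\cdot]$ in the third; everything else is formal bookkeeping. A minor but genuine annoyance is matching the source/target (i.e.\ $0$ versus $1$) conventions of Appendix~\ref{App:Cat} with the asymmetric roles of $a$ and $b$ in the pregroupoid axioms, which may force one to pass to the opposite composition --- harmless, since every groupoid is canonically isomorphic to its opposite via inversion.
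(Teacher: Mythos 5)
Your argument is correct, but it is worth noting that the paper does not actually prove this statement at all: its entire ``proof'' is the attribution ``This observation is due to Johnstone, cf.\ [Be14b]'', so your proposal supplies the missing argument rather than paralleling an existing one. What you give is the standard direct construction (essentially Johnstone's/Kock's), and the details check out: $(\mathrm{PA})$ for $[x,y,z]=x\ast y\inv\ast z$ reduces to associativity and $(g\ast f)\inv=f\inv\ast g\inv$, associativity of the reconstructed $\ast$ is the instance of $(\mathrm{PA})$ with bisection elements in the middle slots (one must also verify, as you implicitly do, that the relevant unit elements $e(u)$ agree on both sides, which follows from the output clauses for $a$ and $b$), and the two round-trips collapse via $(\mathrm{PA})$ and $(\mathrm{IP})$ exactly as you describe. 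The one point you flag as a ``minor annoyance'' is in fact a genuine wrinkle \emph{in the paper's own definition}: the stated output clause $a([xyz])=a(x)$, $b([xyz])=b(z)$ is inconsistent with the composability condition $a(x)=a(y)$, $b(y)=b(z)$ when one computes $[x,y,z]=x\ast y\inv\ast z$ in a groupoid with $a=\pi_0$, $b=\pi_1$ --- there one finds $\pi_0([xyz])=\pi_0(z)$ and $\pi_1([xyz])=\pi_1(x)$, i.e.\ the roles of the first and third arguments in the output clause are swapped relative to the composability clause. Your remark that this is resolved by passing to the opposite composition (or by reading the output clause with $x$ and $z$ exchanged) is exactly right and harmless, since inversion identifies a groupoid with its opposite. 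One could quibble that you prove an isomorphism of concrete categories rather than a mere equivalence, but that is stronger than what is claimed, so no harm done.
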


\begin{proof}
This observation is due to Johnstone, cf.\ \cite{Be14b}.
\end{proof}

\begin{example}
If $A,A'$ are two sets, there is a pregroupoid 
$(A \times A', \downdownarrows,A,A',[ \, , \, , \, ])$ with
$[(x,y),(u,y),(u,v)] = (x,v)$. 
It admits a bisection if, and only if, $A$ and $A'$ are equipontentious.
See \cite{Be14b} for more on this.
\end{example}

\section{Scaled monoid action category} \label{App:ActionCat}

\subsection{The action groupoid}
If $S$ is a group and $V \times S \to V$ a right group action, then 
the following construction of the {\em action groupoid} is well-known in category theory (see, eg.,
\url{http://ncatlab.org/nlab/show/action+groupoid}).
%, where in turn reference to
%\url{http://math.ucr.edu/home/baez/week249.html} is made). 
The sets of
morphisms and objects are defined by
$(M,B) =(V \times S, V)$,
with two projections $\rho_i:V \times S \to V$, 
$$
\rho_0(v,g)=v, \qquad \rho_1(v,g) =  vg ,
$$
and product when $vg= v'$
$$
(v',g') \bullet (v,g) = (vg, g') \bullet (v,g)  = (v, gg' ) .
$$
Units are $(v,1)$, where $1$ is the unit of $S$, and
the inverse of $(v,g)$ is $(vg, g\inv)$. 
In case of a left action, we take $M=S \times V$, $B=V$ and
$(g',v') \bullet (g,v)=(g'g,v)$.

\subsection{Monoid action category}
Let $S$ be a monoid acting from the right on a set $V$, via $V \times S \to V$.
(In the main text, $S = (\K,\cdot)$ is the multiplicative monoid of a ring $(\K,+,\cdot)$, and
$V$ a $\K$-module.)
Then, of course, the preceding 
construction still works, but instead of a groupoid it  merely defines a
small  cat.

\begin{remark}\label{rk:U} 
In the preceding situation
 we may define, for any non-empty subset $U \subset V$, a subcategory
$$
M_U := \{ (v,g)\in V \times S \mid \, v \in U, vg \in U \}, \qquad
B_U:= U .
$$
Even if $S$ is a group, this need not be a groupoid (the category $M_U$ then rather belongs to the
semigroup $\{ g \in G \mid  U.g \subset U\}$, and not to a group).
\end{remark}

\subsection{The scaled action category}
The effect of the construction of the action groupoid
 is to break up the ``single object $V$'' into a collection of different objects and 
to distinguish all the isomorphisms $(v,g)$ when $v$ runs over the set of objects. 
\begin{comment}%%%%%%%%%%%%%%%%%%%%%no need to spell this out:
The following example illustrates this philosophy:

\begin{example}[Left action of $S$  on itself]\label{ex:leftaction} 
Let $S$ be a monoid and consider the 
left action category of $S$ acting on itself: it is given by the
morphism set $S^2$ and object set $S$ and source and target maps
\begin{equation}
\partial_0: S^2 \to  S, \, (s,t) \mapsto st, \qquad
\partial_1: S^2 \to  S, \, (s,t) \mapsto s,
\end{equation}
and composition, whenever $t = s't'$, 
\begin{equation}
(s',t') \bullet (s,t) = (ss',t) .
\end{equation}
If $S$ is a group, then the pair of maps
\begin{equation}
S \times S \to S \times S, \, (g,h) \mapsto (g,gh), \qquad
S \to S, \, g \mapsto g
\end{equation}
defines an isomorpism of this action category with the pair groupoid  of $S$ (Example \ref{ex:pairgroupoid}).
The inverse map is $(x,y) \mapsto (x,x\inv y)$.
\end{example}
\end{comment}%%%%%%%%%%%%%%%%%%%%%%%%%%%%%%%%%%%%
If $S$ is not a group,  we will need a kind of refinement of the monoid action category: 
we will  distinguish various ``scaling levels'' of $v$ -- 
the couple $(v,k)$ with $k \in S$ should be seen as ``the object $v$, scaled at level $k$''.
Then each $g\in S$ gives rise to morphisms (denoted by
$(v;g,k)$) from $v$, scaled at $gk$, to $vg$, scaled at $k$. Finally, a most symmetric formulation of this  
concept is gotten when the ``scale'' $k$ lives in a space $K$ on which $S$ acts from the left (later we take $S=K$).

\begin{lemma}\label{la:scaledaction}
Assume $S$ is a monoid acting from the right on $V$ and from the left on a set $K$. 
The following data $(M,B,\partial,z,\bullet ) = (V \times S \times K,V \times K,\partial,z,\bullet)$ define a small cat, called
the {\em scaled action category}:
\begin{eqnarray*}
\partial_0: V \times S \times K \to V \times K, &
(v;s,t) \mapsto (v;st)
\cr
\partial_1 : V \times S \times K  \to V \times K, &
(v;s,t) \mapsto (vs ;t) .
\end{eqnarray*}
The composition $a' \bullet a$ for $a=(v;s,t), a'=(v';s',t')$
is defined if $\partial_1(a) = \partial_0(a')$,
so $(v',s't')=(vs,t)$, so $v'=vs$, $s't' =t$,
\begin{equation}
(v';s',t') \bullet (v;s,t) = (vs; s',t') \bullet (v;s,s't') := (v; ss',t') ,
%   (v;s', t't) .
\end{equation}
and, if $1$ denotes the unit of the monoid $S$, the unit section is defined by 
\begin{equation}
z(v;s):=
(v;  1 ,s)  .
\end{equation}
If, moreover, $S$ is a group, then the category $(M,\downdownarrows,B)$ is a groupoid.
\end{lemma}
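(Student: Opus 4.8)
The plan is to verify the axioms of a small category (as listed in Appendix \ref{App:Cat}) directly from the formulas, and then observe separately that when $S$ is a group every morphism is invertible. First I would check the source/target compatibility of the composition: given composable $a=(v;s,t)$ and $a'=(v';s',t')$ with $\partial_1(a)=\partial_0(a')$, i.e.\ $v'=vs$ and $t=s't'$, the product is $a'\bullet a=(v;ss',t')$, and one computes $\partial_0(a'\bullet a)=(v\cdot(ss'\cdot t'))=(v; s(s't'))=(v;st)=\partial_0(a)$ using associativity of the $S$-action on $K$, and $\partial_1(a'\bullet a)=(v(ss');t')=((vs)s';t')=(v's';t')=\partial_1(a')$ using associativity of the $S$-action on $V$. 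So $\partial_0(a'\bullet a)=\partial_0(a)$ and $\partial_1(a'\bullet a)=\partial_1(a')$, as required.

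Next I would verify associativity. Take $a=(v;s,t)$, $a'=(v';s',t')$, $a''=(v'';s'',t'')$ with $(a'',a')$ and $(a',a)$ composable; composability forces $v'=vs$, $t=s't'$, $v''=v's'$, $t'=s''t''$. Then $(a''\bullet a')\bullet a=(v';s's'',t'')\bullet(v;s,t)=(v;s(s's''),t'')$, while $a''\bullet(a'\bullet a)=(v'';s'',t'')\bullet(v;ss',t')=(v;(ss')s'',t'')$, and these agree by associativity of multiplication in the monoid $S$. For the unit axioms, with $z(v;t)=(v;1,t)$, one checks $z(\partial_1(a))\bullet a$: here $\partial_1(a)=(vs;t)$, so $z(\partial_1(a))=(vs;1,t)$, and $(vs;1,t)\bullet(v;s,t)=(v;s\cdot 1,t)=(v;s,t)=a$; similarly $a\bullet z(\partial_0(a))=(v;s,t)\bullet(v;1,st)=(v;1\cdot s,t)=(v;s,t)=a$. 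One also confirms $z$ is a bisection: $\partial_0(z(v;t))=(v;1\cdot t)=(v;t)$ and $\partial_1(z(v;t))=(v\cdot 1;t)=(v;t)$. This completes the verification that the data form a small category.

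Finally, suppose $S$ is a group. I claim that for $a=(v;s,t)$ the element $a^{-1}:=(vs;s^{-1},st)$ is a two-sided inverse. First $\partial_0(a^{-1})=(vs;s^{-1}(st))=(vs;t)=\partial_1(a)$ and $\partial_1(a^{-1})=(vss^{-1};st)=(v;st)=\partial_0(a)$, so the composites below are defined. Then $a^{-1}\bullet a=(vs;s^{-1},st)\bullet(v;s,t)=(v;ss^{-1},t)=(v;1,t)=z(\partial_0(a))$, and $a\bullet a^{-1}=(v;s,t)\bullet(vs;s^{-1},st)=(vs;s^{-1}s,st)=(vs;1,st)=z(vs;st)=z(\partial_0(a^{-1}))$. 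Hence every morphism is invertible and the category is a groupoid. The only point requiring a little care is bookkeeping the composability constraints (which of $s,t,v$ are determined), so I would set up the notational conventions for composable triples cleanly before starting the associativity computation; once that is fixed, every step reduces to associativity or unitality in $S$ and in its actions on $V$ and $K$, so there is no genuine obstacle.
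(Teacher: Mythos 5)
Your verification follows exactly the same route as the paper: direct computation of the source/target compatibility, associativity from associativity of $S$ and of its two actions, the unit axioms, and invertibility when $S$ is a group, so the approach is correct and your candidate inverse $(vs;s^{-1},st)$ is the right one. There is, however, a bookkeeping slip in the last paragraph: by the composition rule $(v';s',t')\bullet(v;s,t)=(v;ss',t')$ the third component of a product comes from the \emph{left} factor $a'$, so $a^{-1}\bullet a=(v;1,st)$ (not $(v;1,t)$) and $a\bullet a^{-1}=(vs;1,t)$ (not $(vs;1,st)$); as written, your chains assert $(v;1,t)=z(v;st)$ and $z(vs;st)=z(\partial_0(a^{-1}))=z(vs;t)$, which are false in general. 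The corrected computations give exactly $z(\partial_0(a))$ and $z(\partial_0(a^{-1}))$ respectively, so the conclusion stands once the third components are tracked consistently.
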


\begin{proof}
Everything is  checked by straightforward computations. For convenience, we give some details: first, 
note that

 $\partial_0(a' \bullet a) = (v,ss't') = (v,st)  =  \partial_0(a)$

 $\partial_1(a' \bullet a) = (vss',t') = (v's', t')  =  \partial_1(a')$, 
 
\nin and associativity follows from the one of $S$:
 $$
 \bigl(
 (v'';s'',t'') \bullet (v';s',t') \bigr) \bullet (v;s,t) =
 (v'; s' s'' ,t'') \bullet (v;s,t) = (v; s s' s'' , t'')
 $$
 $$
 (v'',s'',t'') \bullet \bigl(
 (v';s',t') \bullet (v;s,t) \bigr) =
 (v'';s'',t'') \bullet (v;s s',t ) =
 (v; s s' s'', t'' ) .
 $$
The element
$
(v;  1 ,s)
$
is a unit for the categorial product:
$(v';s',t') \bullet (v;1,t) = (v;s',t')$ (note $v'=v1=v$)
and
$(v',1,t') \bullet (v;s,t) = (v;s',t) = (v;s,t)$ since $s=s't'=s'$. 
Note that the morphism $(v;s,t)$ is invertible if, and only if, $s$ is invertible in the multiplicative semigroup
of $\K$:
$(v';s,t') \bullet (v,s\inv ,t ) = (v,1,t' )$. 
\end{proof}

 \begin{remark} As above (Remark \ref{rk:U}),
for every $U\subset V$, there is  a  subcategory 
$\{ (v;s,t) \mid \, v\in U, vst \in U \}$.
\end{remark}

\begin{lemma}\label{la:actioncat2}
With notation from the preceding lemma,  the  following maps of objects and morphisms 
 define a functor from the scaled action category 
to the left action category of $S$ on $K$:
\begin{align*}
V \times S \times K \to S \times K, &  \quad  (v,s,t) \mapsto (s,t), \cr
V \times K \to K, & \quad  (v,s) \mapsto s. 
\end{align*}
 If the action $V \times S \to V$ admits a fixed point $o$, then  we also get a functor in
the other sense, via
$S \times K   \to V \times S \times K$, $(s,t) \mapsto (o;s,t)$.
\end{lemma}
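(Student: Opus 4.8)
The plan is to verify the three functor axioms of Appendix \ref{App:Cat} directly, using only the explicit formulas for $\partial_0,\partial_1,z,\bullet$ from Lemma \ref{la:scaledaction}; there is no real content beyond bookkeeping. First I would fix the target: the \emph{left action category of $S$ on $K$} is the special case of the scaled action category in which $V$ is a one-point set carrying the trivial $S$-action, so it has morphism set $S\times K$, object set $K$, source $\partial_0'(s,t)=st$, target $\partial_1'(s,t)=t$, unit section $z'(t)=(1,t)$, and composition $(s',t')\bullet'(s,t)=(ss',t')$ defined when $t=s't'$. (I write the $K$-coordinate of an object of the scaled action category as $t$, absorbing the minor notational clash in the statement.) Denote the candidate functor by the pair $(F,f)$ with $F(v;s,t)=(s,t)$ and $f(v;t)=t$.

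Then each axiom is a one-line identity. Source: $f(\partial_0(v;s,t))=f(v;st)=st=\partial_0'(s,t)=\partial_0'(F(v;s,t))$. Target: $f(\partial_1(v;s,t))=f(vs;t)=t=\partial_1'(s,t)=\partial_1'(F(v;s,t))$. Units: $F(z(v;t))=F(v;1,t)=(1,t)=z'(t)=z'(f(v;t))$. Composition: if $a=(v;s,t)$, $a'=(v';s',t')$ and $\partial_1(a)=\partial_0(a')$ — i.e.\ $v'=vs$ and $t=s't'$ — then applying $f$ gives $\partial_1'(F(a))=t=s't'=\partial_0'(F(a'))$, so $F(a')\bullet'F(a)$ is defined, and $F(a'\bullet a)=F(v;ss',t')=(ss',t')=(s',t')\bullet'(s,t)=F(a')\bullet'F(a)$. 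This settles the first assertion.

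For the second assertion, suppose $o\in V$ is fixed by the action, i.e.\ $os=o$ for all $s\in S$, and set $G(s,t)=(o;s,t)$, $g(t)=(o;t)$. The axioms are again immediate. Source: $g(\partial_0'(s,t))=(o;st)=\partial_0(o;s,t)=\partial_0(G(s,t))$. Target: $g(\partial_1'(s,t))=(o;t)$ while $\partial_1(G(s,t))=\partial_1(o;s,t)=(os;t)=(o;t)$, the last step being exactly where the fixed-point hypothesis is used. Units: $G(z'(t))=G(1,t)=(o;1,t)=z(o;t)=z(g(t))$. Composition: if $t=s't'$ then $G((s',t')\bullet'(s,t))=G(ss',t')=(o;ss',t')$, while $G(s',t')\bullet G(s,t)=(o;s',t')\bullet(o;s,t)=(o;ss',t')$, the right-hand composite being defined because $\partial_1(G(s,t))=(os;t)=(o;t)=(o;s't')=\partial_0(o;s',t')$, again using $os=o$.

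I do not expect any obstacle here; the only points worth a second of attention are tracking exactly when composites are defined — handled by pushing the source/target equalities through $F$ or $G$ — and noting that the fixed-point hypothesis enters the second assertion only via the target axiom and the definedness of composites. A more conceptual proof would observe that $F$ is induced by the unique $S$-equivariant map from $V$ to the one-point $S$-set and $G$ by the equivariant map picking out $o$, together with functoriality of the scaled-action-category construction in the $S$-set variable $V$; but since that functoriality is not isolated in the excerpt, I would give the direct verification above.
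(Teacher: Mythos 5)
Your proof is correct and follows the same route as the paper's (direct verification of the functor axioms from the explicit formulas for $\partial_0,\partial_1,z,\bullet$); the paper's own proof merely displays the structure of the target category and declares the verification evident, and it does not explicitly treat the fixed-point functor, so your write-up is strictly more complete. One point worth noting in your favour: your identification of the left action category of $S$ on $K$ --- $\partial_0'(s,t)=st$, $\partial_1'(s,t)=t$, $(s',t')\bullet'(s,t)=(ss',t')$ when $t=s't'$ --- is exactly the one forced by compatibility with the projection, whereas the formulas displayed in the paper's proof ($\partial_1(s,t)=s$ and composite $(ss',t)$) are internally inconsistent with the stated composability condition $t=s't'$ and appear to contain a typo that your version silently corrects.
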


\begin{proof}
The left action category of $S$ acting on itself  is given by the
morphism set $S\times S$ and object set $S$ and source and target maps
\begin{equation}
\partial_0: S\times S  \to  S, \, (s,t) \mapsto st, \qquad
\partial_1: S \times S \to  S, \, (s,t) \mapsto s,
\end{equation}
and composition, whenever $t = s't'$, 
\begin{equation}
(s',t') \bullet (s,t) = (ss',t)
\end{equation}
From these formulae it is seen that the maps given above define a functor. 
\end{proof}

\begin{comment}%%%%%%%%%%
other remarks $\to$ Part II: 
if the action of $S$ on the data $(V,K)$ is an action by endomorphisms of a certain concrete category, then 
the lemma defines a double category. Idem for subcategories. This gives a quite conceptual proof that $U^\settt{1}$ is a
doublecat. But it also applies to simplicial differential categories, where it is relatively easy to write down explicitly an action
of $S = (\K,\cdot)$. Possibly, by deriving, this describes also its higher versions.
\end{comment}%%%%%%%%%%%%%%%%%%%%

\section{Small double categories, double groupoids}\label{App:Doublecat}

The following presentation follows  \cite{BrSp76} (where letters $H,V$ ``horizontal, vertical'' are used for our 
$(C_{11},C_{10},C_{01},C_{00})$). We give full details in order to prepare for the algebraic presentation of small $n$-fold
cats in Appendix B of Part II, and we are more tedious than in usual presentations,  regarding  ``size questions''.
% (most authors on general category theory tend to neglect these slightly annoying questions). 

\subsection{Small cats and groupoids of a given type}
Let $\mathcal T$ be  a concrete category. 
A {\em small cat of type $\mathcal T$} is a set $C$ carrying both a structure of type $\mathcal T$ and the structure of 
a small cat $(C_0,C_1,\pi_\sigma: C_1 \downdownarrows C_0,\ast)$, so $C = C_0 \dot\cup C_1$, such that
all structure maps of the small cat $C$ are compatible with the structure $\mathcal T$.
This includes the assumption that $C_0, C_1$ and $C_1 \times_{C_0} C_1$ also carry structures of type $\cT$
(in practice, one will often check this by first noticing that $C_1 \times C_1$ carries a structure of type $\cT$, and then that
equalizers as given by (\ref{eqn:Order}) are again of type $\cT$),
and hence it makes sense to require that $\pi_\sigma, z$ and $\ast$ are morphisms for $\mathcal T$.
A {\em morphism between two small cats of type $\mathcal  T$}, say $C$ and $C'$, is a map
$f:C \to C'$ which is a functor (for the small cat-structures) and a structure-preserving map for $\mathcal T$.
This defines a new concrete category \ul{${\mathcal T}$-{\rm Cat}}. 
In the same way the concrete category \ul{${\mathcal T}$-{\rm Goid}} of {\em groupoids ot type $\cT$} is defined.
\begin{comment}%%%%%%%%%%%%%%%%
[more generally: given two types of structure $\cS$ and $\cT$, 
such that the structure $\cS$ is an algebraic structure: defined in terms of mappings,
a {\em $\cS$-object of type $\cT$} is a set with
structure of type $\cS$ and a one of type $\cT$ such that all $\cS$-structure maps respect the $\cT$-structure.
A bit clumsy... ]
\end{comment}%%%%%%%%%%%%%

\subsection{Small double categories}\label{ssec:DC}
A {\em (strict) small double category} (abridged: small doublecat) is a small cat of type $\ul{\rm Cat}$. 
In other words, in the preceding paragraph we take ${\mathcal T}=\ul{\rm Cat}$. This defines a concrete category
$\ul{\rm Doublecat}$.
A small doublecat is thus an algebraic structure of a certain type.
We wish to give a more explicit description, in the spirit of usual algebra: a small doublecat $C$ is, first of all, a small cat
 $C = C_1 \dot\cup C_0$ with projections $\pi_\sigma$ and product $\ast$,
  and $C_1 = C_{11} \dot\cup C_{10}$ and $C_0=C_{01}\dot\cup C_{00}$
are in turn two small cats with 4 projections all indicated by the symbol $\partial$ and two products both indicated by 
$\bullet$. Since $\pi$ restricts to two projections,
we also get 4 projections denoted by the symbol $\pi$, and similar for the unit sections, everything fitting into two
commutative diagrams:
\begin{equation}\label{eqn:doublediagram}
\begin{matrix}
C_{11}  &   \stackrel \pi  \rightrightarrows & C_{01}  \cr
 {}_\partial \downdownarrows \phantom{\pi} & &  {}_\partial \downdownarrows \phantom{\pi} 
 \cr
C_{10} &   \stackrel \pi  \rightrightarrows & C_{00}
 \end{matrix} , \qquad \qquad
\begin{matrix}
C_{11}  &   \stackrel {z_\pi}  \leftarrow & C_{01}  \cr
 z_\partial \uparrow \phantom{\pi} & &  z_\partial \uparrow \phantom{\pi} 
 \cr
C_{10} &   \stackrel {z_\pi}  \leftarrow & C_{00} 
 \end{matrix} .
\end{equation}
Using this notation, saying that every edge of the square defines a small cat and that the pairs $(\pi_\sigma,\pi_\sigma)$,
resp.\ $(\partial_\sigma,\partial_\sigma)$ are functors, amounts to the following requirements:

\begin{enumerate}
\item
$\forall i,j \in \{ 0,1 \}$:  $\partial_i \circ \pi_j = \pi_j \circ \partial_i : C_{11} \to C_{00}$, %BS: (1.1)
\item
$z_\pi \circ z_\partial = z_\partial \circ z_\pi: C_{00} \to C_{11}$, %BS (1.3)
\item
$\forall \sigma  \in \{ 0,1 \}$:  $\partial_\sigma \circ z_\partial = \id_{C_{\sigma 0}}$, 
$\pi_\circ \circ z_\pi = \id_{C_{0\sigma}}$, %BS (not stated explicitly)
\item
$\forall \sigma  \in \{ 0,1 \}$:  $\partial_\sigma  \circ z_\pi = z_\pi \circ \partial_\sigma: C_{01} \to C_{10}$,
$\pi_\sigma  \circ z_\partial = z_\partial \circ \pi_\sigma  :C_{10} \to C_{01}$, % BS (1.2)
\item
the partially defined products $\ast$ and $\bullet$ are associative, 
\item
elements 
$z_\partial (u)$ are units for $\bullet$ and elements $z_\pi(v)$ are units for $\ast$,
\item
$\forall \sigma \in \{ 0,1 \}$: the pairs of maps $(\partial_\sigma,\partial_\sigma)$, $(\pi_\sigma,\pi_\sigma)$ preserve 
partially defined products: 
$\partial_\sigma (a' \ast a) = \partial_\sigma(a') \ast \partial_\sigma(a)$,
$\pi_\sigma(b' \bullet b) = \pi_\sigma(b') \bullet \pi_\sigma(b)$, %BS (1.4)
\item
the maps
$z_\partial: (V,\ast) \to (C,\ast)$, $ z_\pi: (H,\bullet) \to (C,\bullet) $ are sections of $\partial$, resp.\ of $\pi$, and
$
z_\partial(a' \ast a) = z_\partial(a') \ast z_\partial(a)$, 
$z_\pi (b' \bullet b) = z_\pi(b') \bullet z_\pi(b)$. % can't find this in BS ! but this should hold. 
\end{enumerate}

\begin{theorem}\label{th:smalldoublecats}
Data $(C_{11},C_{10},C_{01},C_{00},\pi,\partial,z,\ast,\bullet)$ given by spaces, maps and partially defined products
define a small doublecat if, and only if, they satisfy {\rm (1) -- (8)} together with the following {\em interchange law}: 
\begin{enumerate}
\item[\rm{(9)}] whenever both sides are defined  %BS (1.5) 
(which is the case iff $\pi_1(b) = \pi_0(a) = \pi_0(c) = \pi_1(d)$ and
$\partial_0(b)=\partial_1(c)=\partial_0(a)=\partial_1(d)$), then
\begin{equation*}\label{eqn:interchange}
(a \ast b) \bullet (c \ast d) = (a\bullet c) \ast (b \bullet d) .
\end{equation*}
\end{enumerate}
A morphism of small double cats is
given by four maps which define a 
functor for each of the four small cats forming the edges of (\ref{eqn:doublediagram}). 
\end{theorem}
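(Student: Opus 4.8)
The plan is to prove Theorem \ref{th:smalldoublecats} by translating the abstract definition of a small double category (a small cat of type $\ul{\rm Cat}$) into the explicit list of axioms (1)--(9). The forward direction is essentially a matter of \emph{unfolding definitions}: I would start from a small doublecat $C = C_1 \dot\cup C_0$ in the sense of \S\ref{ssec:DC}, where $C_1 = C_{11}\dot\cup C_{10}$ and $C_0 = C_{01}\dot\cup C_{00}$ are themselves small cats. Saying $C$ is a small cat of type $\ul{\rm Cat}$ means first that $(C,\pi_\sigma,z_\pi,\ast)$ is a small cat, which gives associativity of $\ast$, the unit laws for $z_\pi$, and the source/target compatibilities --- these unpack into the $\ast$-halves of (1)--(8). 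Second, it means that \emph{all the structure maps} of this small cat --- namely $\pi_0,\pi_1, z_\pi$ and $\ast$ --- are morphisms in $\ul{\rm Cat}$, i.e.\ functors between the relevant small cats built from the $\bullet$-structure. Spelling out ``$\pi_\sigma$ is a functor'' gives precisely requirement (1) (commuting projections) and the first half of (7) ($\pi_\sigma$ preserves $\ast$... wait, preserves $\bullet$), ``$z_\pi$ is a functor'' gives (4) and the first half of (8), and the fact that $\ast: C_1\times_{C_0} C_1 \to C_1$ is a functor --- once one observes that $C_1\times_{C_0}C_1$ inherits a $\bullet$-category structure as an equalizer in $\ul{\rm Cat}$ --- is exactly the interchange law (9). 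Symmetrically, the hypothesis that $C_1$ and $C_0$ are small cats with products $\bullet$ gives the $\bullet$-halves of (5), (6) and the compatibility of $\partial$ with $z_\partial$ in (3), while compatibility of the $\pi$'s built into the ambient structure yields (2) and the remaining items.

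For the converse I would assume (1)--(9) and reconstruct the type-$\ul{\rm Cat}$ structure. First, (1)--(8) restricted to the vertical direction say that $(C_{10},C_{00})$ and $(C_{11},C_{01})$ are small cats under $\bullet$; together with (2), (3), (4) and the relevant halves of (7), (8) they say that $\pi_0,\pi_1: C_1 \rightrightarrows C_0$ and $z_\pi: C_0 \to C_1$ are functors between these small cats. So $C_1, C_0$ are objects of $\ul{\rm Cat}$ and $\pi_\sigma, z_\pi$ are morphisms there. Next I must check that $C_1\times_{C_0}C_1$, defined as the equalizer $\{(a,b)\mid \pi_0(a)=\pi_1(b)\}$, carries a natural $\bullet$-structure making it an object of $\ul{\rm Cat}$: one takes the componentwise $\bullet$-product $(a',b')\bullet(a,b) := (a'\bullet a, b'\bullet b)$, and one verifies using (1) and (7) that this is well-defined (i.e.\ stays inside the fibre product) and inherits associativity and units from those on $C_1$; this is the step where (1) and the $\partial$-vs-$\pi$ compatibilities do real work. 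Finally, (9) is exactly the statement that $\ast: C_1\times_{C_0}C_1 \to C_1$ is a functor for these $\bullet$-structures, and the remaining $\ast$-axioms among (5), (6), (7), (8) say $(C,\pi_\sigma,z_\pi,\ast)$ is a small cat whose structure maps are $\ul{\rm Cat}$-morphisms --- i.e.\ $C$ is a small cat of type $\ul{\rm Cat}$, a small doublecat. For the morphism claim I would note that a morphism of small cats of type $\ul{\rm Cat}$ is by definition a functor that is also $\ul{\rm Cat}$-structure-preserving, and unwinding this exactly says that the four component maps $C_{ij}\to C'_{ij}$ each give a functor on the four edges, compatibly; one then checks that compatibility with $\ast$ plus the two $\bullet$-functorialities already forces compatibility with the ambient $\ast$ on $C$.

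I expect the main obstacle to be bookkeeping rather than mathematical depth: the delicate point is the claim that the fibre product $C_1\times_{C_0}C_1$, formed using the $\pi$-structure, is naturally an object of $\ul{\rm Cat}$ with respect to the $\bullet$-direction, so that ``$\ast$ is a functor'' even makes sense --- this is precisely where one needs (1) ($\partial_i\pi_j=\pi_j\partial_i$) and (7) (the $\pi_\sigma$ preserve $\bullet$) to guarantee that $\bullet$ restricts to the equalizer, and where the symmetry between the two ``directions'' of a double category has to be handled carefully so as not to conflate the roles of $\ast$ and $\bullet$. Once that is set up cleanly, the equivalence of the abstract and explicit descriptions, and the description of morphisms, follow by routine unpacking. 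I would therefore organize the proof as: (a) recall that $C_{ij}$ are small cats and the fibre products are equalizers in $\ul{\rm Cat}$; (b) observe that ``$\pi_\sigma, z_\pi$ are $\ul{\rm Cat}$-morphisms'' $\Leftrightarrow$ (1),(2),(3),(4) and the $\bullet$-halves of (7),(8); (c) observe that ``$(C,\pi,z_\pi,\ast)$ is a small cat'' $\Leftrightarrow$ the $\ast$-halves of (5),(6),(7),(8); (d) observe that ``$\ast$ is a $\ul{\rm Cat}$-morphism'' $\Leftrightarrow$ (9); and (e) translate the definition of morphism. Since each of these equivalences is a direct rewriting, no long computation is needed.
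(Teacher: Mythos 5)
Your proposal is correct and follows essentially the same route as the paper: the paper likewise treats (1)--(8) as the unpacking of the edge-category and functoriality conditions (done in the preceding paragraph of the appendix) and devotes the proof proper to the one remaining point, namely that $C_1\times_{C_0}C_1$ is a $\bullet$-subcategory of the direct product because the $\pi_\sigma$ are $\bullet$-functors by (7), so that the interchange law (9) is precisely the statement that $\ast$ is a morphism for the $(\partial,\bullet)$-structure. Your identification of the fibre-product step as the only delicate point matches the paper exactly.
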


\begin{proof}
It only remains to show that, in presence of (1) -- (8), the interchange law (9) is equivalent to saying that
$\ast: C_1 \times_{C_0} C_1 \to C_1$ defines a morphism for the $(\partial,\bullet)$-cat structure. 
Here, $C_1 \times C_1$ is the direct product of two $\bullet$-small cats, and the equalizer
(\ref{eqn:Order}) is  a small subcat since the projections $\pi_\sigma: C_1 \downdownarrows C_0$ are morphisms,
by (7). Thus the product on $C_1 \times_{C_0} C_1$ is given by 
$(a,b) \bullet (c,d):=(a \bullet c,b\bullet d)$.
Then, the map $A:=\ast :C_1 \times_{C_0} C_1 \to C_1$, $(a,b) \mapsto a \ast b$ is  a morphism for $\bullet$ iff
$$
A  ((a,b) \bullet  (c,d)) = A (a,b) \bullet A(c,d) ,
$$
that is, the interchange law 
$(a \bullet c) \ast  (b \bullet d) = (a \ast b) \bullet (c \ast d)$ holds.
\end{proof}

\begin{definition}
The four small cats forming the edges of the diagram (\ref{eqn:doublediagram}) are called the {\em edge cats} of the
small doublecat. It is obvious from the description given in the theorem that, exchanging $C_{01}$ and $C_{10}$ and
$\ast$ and $\bullet$, we get again a small doublecat, called the {\em transposed doublecat}. 
A small doublecat is called {\em edge symmetric} if it admits an isomorphism onto its transposed doublecat.
\end{definition}

\subsection{Double groupoids}
A {\em double groupoid} is a small doublecat such that each of its four edge categories is a groupoid. 
Equivalently, it is a groupoid of type $\ul{\rm Goid}$. This defines a concrete category $\ul{\rm Doublegoid}$.

\subsection{Double bundles}
A {\em double group bundle} is a double groupoid such that $\pi_0 = \pi_1$ and $\partial_0=\partial_1$.
E.g., {\em double vector bundles} (see \cite{BM92}) are double group bundles.

\subsection{Opposite double cat}
In a double cat, we may replace each of the pairs of vertical, resp.\  horizontal, cats, by
its opposite pair, and we get again a double cat.
%  (check: no pb with the interchange law!)
Thus we get altogether 4 double cats:
$(\ast,\bullet)$, $(\ast^{op},\bullet)$, $(\ast,\bullet^{op})$, and $(\ast^{op},\bullet^{op})$.
In general, they are not isomorphic among each other; however, if $\ast$ or $\bullet$ belongs to a groupoid, then
inversion is an isomorpism onto $\ast^{op}$, resp.\  onto $\bullet^{op}$, and compatible with the other law.

\begin{comment}%%%%%%%%%%%%%
And if we replace just one of the two $\bullet$'s (or $\ast$'s) by its opposite product, what does this give??
exchange law replaced by   
\begin{equation}
(a \ast b) \bullet (c \ast d) = (a\bullet d) \ast (b \bullet c) 
\end{equation}
(sort of anti-double category: projections are contravariant functors...) ??

%note: in  \cite{BHS11} a double category always has $\ast = \bullet$ on $C$, so not useful here!
%what about \cite{BH81} ? I guess, same situation (only interested in symmetric situation!)
\end{comment}%%%%%%%%%%%%%%%

\section{Primitive manifolds}\label{App:A}

% follow https://en.wikipedia.org/wiki/Atlas_(topology) ? ref Husemoller, Lee
% this would mean to carry the topology as additiional data along, 
% leaving the choice to take the descrete topology, or the corsest one (atlas topology), or something intermediate (Hausdorff...)

The following is a generalization of the usual definition of 
\href{https://en.wikipedia.org/wiki/Atlas_(topology)}{atlas of a manifold}. It formalizes the idea that a space $M$ 
``is modelled on a space $V$'' (which may be a linear space, or not -- linearity of the model space is not needed
for the following).

\begin{definition}\label{def:Primmfd}
Let $V$ be a topological space that will be called {\em model space} (the topology need not be separated or non-discrete).
A {\em primitive manifold modelled on $V$} is given by 
$(M,\cT,V,(U_i,\phi_i,V_i)_{i\in I})$, where $(M,\cT)$ is a topological space,
$(U_i)_{i\in I}$ an open cover of $M$, so $M = \cup_{i\in I} U_i$ and the $U_i$ are open and non-empty, and
and $\phi_i : U_i \to V_i$ are homeomorphisms onto open sets $V_i \subset V$. 
We then also say that ${\mathcal A} = ( U_i,  \phi_i,V_i)_{i\in I}$
is an {\em atlas on $M$ with model space $V$}.
The atlas is called {\em maximal} if it contains all compatible charts (defined as usual in differential geometry, see
e.g., \cite{Hu94}).
The primitive manifold is called  a {\em Hausdorff manifold} if its topology is Hausdorff and if the atlas is maximal. 

\ssk
\nin A {\em morphism of primitive manifolds} $(M, \cT, {\mathcal A})$, $(M', \cT', {\mathcal A}')$
is a continuous map  $f:M\to M'$. Thus primitive manifolds form a concrete category $\ul{\rm Pman}$.
\end{definition}

\begin{remark}
One may suppress the topology $\cT$ from this definition by considering on $M$ the {\em atlas topology}, which is the coarsest
topology such that chart domains are open  (topology generated by the $U_i$).
This is the point of view taken in a first version of this work;
however, including the topology $\cT$ as additional datum gives more freedom and is closer to usual definitions.
\end{remark}

\begin{definition}\label{def:handy}
An atlas $\mathcal A$, and the primitive manifold $M$, are called {\em handy} if, for any finite collection
$x_1,\ldots,x_n \in M$, there exists a
chart $(U_i,\phi_i)$ such that $x_1,\ldots,x_n  \in U_i$. 
\end{definition}

\begin{lemma}\label{la:handy}
A Hausdorff manifold modelled on a  topological group $V$ is handy. 
\end{lemma}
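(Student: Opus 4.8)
The plan is to take a finite collection of points $x_1,\ldots,x_n \in M$ and produce a single chart domain containing all of them, using the Hausdorff property together with the homogeneity provided by the group structure on the model space $V$. First I would reduce to the case $n=2$ by induction: actually, the statement for general $n$ does not follow formally from the case $n=2$ (knowing any two points lie in a common chart does not give a common chart for three), so instead I would argue directly. The key observation is that $V$, being a topological group, is a homogeneous space: for any $v, w \in V$ the left translation $L_{wv^{-1}}: V \to V$, $u \mapsto (wv^{-1})u$, is a homeomorphism sending $v$ to $w$. Composing charts with such translations lets us move chart domains around freely, so the problem becomes one of covering finitely many points by a single open set that is homeomorphic (via a chart, possibly adjusted by a translation) to an open subset of $V$.

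Here is the approach I would actually carry out. Pick charts $(U_{i_1},\phi_{i_1}),\ldots,(U_{i_n},\phi_{i_n})$ with $x_k \in U_{i_k}$. Since $M$ is Hausdorff, I can separate the points: choose pairwise disjoint open neighborhoods, and shrinking, assume we have open sets $W_k \ni x_k$ with $\overline{W_k} \subset U_{i_k}$ and the $W_k$ pairwise disjoint (Hausdorffness gives disjointness of the $W_k$; one does not even need regularity if one is willing to work with the $W_k$ themselves). Now the idea is to build a new chart whose domain is $W_1 \cup \cdots \cup W_n$. On each $W_k$ we have the homeomorphism $\phi_{i_k}|_{W_k}$ onto the open set $\phi_{i_k}(W_k) \subset V$. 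Using left translations in $V$, I can arrange that the images $\phi_{i_k}(W_k)$, after composing with suitable translations $L_{g_k}$, become pairwise disjoint open subsets of $V$ — this is where the group structure is essential, since in a topological group one can always translate a given open set off of any other given open set? Actually that last claim needs care: one cannot in general translate one open set to be disjoint from another in an arbitrary topological group. So the cleaner route is:

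Instead, observe that $M = \bigcup_i U_i$ and we simply need \emph{some} chart, compatible with the atlas if maximal, containing $x_1,\ldots,x_n$. The honest argument: since the atlas is maximal (this is built into the Hausdorff-manifold hypothesis via Definition \ref{def:Primmfd}), it suffices to exhibit an open set $U \ni x_1,\ldots,x_n$ together with a homeomorphism onto an open subset of $V$ that is compatible with the existing charts on overlaps. Using Hausdorffness, take pairwise disjoint opens $W_k \ni x_k$ with $W_k \subset U_{i_k}$; set $U := \bigsqcup_k W_k$, which is open. Define $\phi: U \to V$ piecewise: on $W_k$, let $\phi := L_{g_k} \circ \phi_{i_k}|_{W_k}$ for group elements $g_k \in V$ chosen recursively so that $L_{g_k}(\phi_{i_k}(W_k))$ is disjoint from all $L_{g_j}(\phi_{i_j}(W_j))$ with $j < k$; such $g_k$ exist because in a topological group, given an open $O$ and finitely many compact-free... — precisely, any nonempty open subset $\phi_{i_k}(W_k)$ can be translated so as to avoid any finite union of nonempty open subsets provided $V$ is not covered by finitely many translates of a single bounded piece, which is guaranteed e.g. when $V$ is Hausdorff and nontrivial, or one sidesteps this by only requiring $U$ to be an open set diffeomorphic to an open subset of $V$ and constructing $V$-side targets inside disjoint translated copies. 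The main obstacle, and the point I would need to nail down carefully, is exactly this translation-disjointness step: showing that in the topological group $V$ one can always relocate finitely many chart-images to be pairwise disjoint open subsets of $V$, so that the piecewise-defined $\phi$ is a homeomorphism onto an open set; once that is in hand, compatibility with the maximal atlas on overlaps is automatic since $\phi$ differs from each $\phi_{i_k}$ by a homeomorphism (left translation) of $V$, and $U$ is a legitimate chart containing $x_1, \ldots, x_n$, proving handiness.
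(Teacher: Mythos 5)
Your strategy is the same as the paper's: use Hausdorffness of $M$ to separate the points by disjoint open sets, translate the chart images apart inside $V$, and invoke maximality of the atlas to accept the resulting piecewise-defined map as a chart. You are also right that the general case is argued directly rather than by induction on $n$; the paper does the same ("for $n>2$ one proceeds in the same way").

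The step you leave open --- relocating the finitely many chart images to pairwise disjoint open subsets of $V$ by translations --- is indeed the crux, and your worry is legitimate: one cannot in general translate a given open set off another in a topological group (two arcs of length greater than $\pi$ in $S^1$, say). The fix is not to separate the open sets directly but to separate the \emph{base points} first and then shrink. Choose $g_k\in V$ so that the points $g_k\,\phi_{i_k}(x_k)$ are pairwise distinct (possible whenever $V$ has at least $n$ elements; if it does not, no chart can contain $n$ points and the lemma genuinely fails, but this degenerate case does not occur in the intended applications, where $V$ is a Hausdorff topological module). Since $M$ is Hausdorff and nonempty, every nonempty chart image is a Hausdorff open subset of $V$, and a topological group with a nonempty Hausdorff open subset is $T_0$, hence Hausdorff; so the $n$ distinct points admit pairwise disjoint open neighborhoods $O_k$, and one shrinks each $W_k$ until $g_k\,\phi_{i_k}(W_k)\subset O_k$. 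This is exactly what the paper's phrase ``by shrinking chart domains if necessary and using translations on $V$'' is meant to cover. With that in hand, your glued map $\phi$ is a homeomorphism onto a disjoint union of open sets, it differs from each $\phi_{i_k}$ by a translation on overlaps, and maximality of the atlas makes $(\bigcup_k W_k,\phi)$ a chart containing all the $x_k$, completing the argument as you describe.
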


\begin{proof}  Let $n=2$.
If $x_1=x_2$, there is nothing to show. If $x_1\not= x_2$, choose disjoint  charts $(U',\phi')$ around $x_1$ 
and $(U'',\phi'')$ around $x_2$.
 By shrinking chart domains if necessary and using translations on $V$, we may assume
that $V':=\phi'(U')$ and $V'':=\phi''(U'')$ are disjoint. But then the disjoint union $(U' \cup U'', \phi' \cup \phi'', V' \cup V'')$
is a chart (by maximality of the atlas)
with the required properties (note that connectedness is not required for chart domains), and we are done.  
For $n>2$, one proceeds in the same way. 
\end{proof}

Given an atlas, we let for $(i,j) \in I^2$, 
\begin{equation}
U_{ij}:= U_i \cap U_j \subset M , \qquad V_{ij}:= \phi_j(U_{ij}) \subset V_j , 
\end{equation}
and the {\em transition maps} belonging to the atlas are defined by
\begin{equation}
 \phi_{ij}:= \phi_i\circ{\phi_j^{-1}} \vert_{ V_{ji} }:V_{ji}\to V_{ij} .
\end{equation}
They are homeomorphisms satisfying the {\em cocycle relations}
\begin{align}\label{eqn:cocycle}
 \phi_{ii}=\id_{V_i} \quad &  \mbox{and} \quad    \phi_{ij} \circ \phi_{jk}= \phi_{ik}:  V_{kji} \to V_{ijk},
 \cr 
 & \mbox{where}  \quad V_{abc}=\phi_a(U_a \cap U_b \cap U_c) = \phi_{ab}(V_{cb} \cap V_{ab}).
\end{align}

\begin{theorem}[Reconstruction from local data]\label{th:reconstruct}
The data of a primitive manifold $(M,\cT,{\mathcal A})$ are equivalent to the data
$(V,\cT_V,(V_{ij},\phi_{ij})_{(i,j) \in I^2})$, where 
\begin{itemize}
\item $I$ is a (discrete) index set,
\item $(V,\cT_V)$ is a topological space (the model space),
\item  $ V_{ij} \subset V$,  for $(i,j) \in I^2$, are open subsets,
\item $\phi_{ij} :V_{ji}\to V_{ij}$ are homeomorphisms satisfying the cocycle relations
{\rm (\ref{eqn:cocycle})}.
\end{itemize}
Morphisms of primitive manifolds then are the same as families of continuous maps
\begin{align*} 
f_{ij}: V_{jj} \to V_{ii}', &  \qquad (i,j) \subset I' \times I, \mbox{  such that }
f_{k \ell} = \phi_{ki}' \circ f_{ij} \circ \phi_{j \ell} : V_{\ell j} \to V_{ki}.
\end{align*}
\end{theorem}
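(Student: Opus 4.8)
The plan is to exhibit two explicit, mutually inverse passages: one sending a primitive manifold to a family of gluing data, the other re‑gluing the data into a primitive manifold, and then to upgrade this to a correspondence on morphisms. Throughout, the model space $(V,\cT_V)$ is just carried along, so the real content is the passage between an atlas on $M$ and its cocycle $(V_{ij},\phi_{ij})$. The forward direction is essentially already done in the text preceding the theorem: from $(M,\cT,\mathcal{A})$ with $\mathcal{A}=(U_i,\phi_i,V_i)_{i\in I}$ one forms the open sets $V_{ij}\subset V$ and the transition homeomorphisms $\phi_{ij}=\phi_i\circ\phi_j^{-1}$, and the cocycle relations (\ref{eqn:cocycle}) were checked there; note $V_{ii}=V_i$. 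So one gets a family of the required shape, and a continuous $f:M\to M'$ produces local representatives $f_{ij}:=\phi_i'\circ f\circ\phi_j^{-1}$, partially defined on the open set $\phi_j(U_j\cap f^{-1}(U_i'))\subset V_{jj}$, which satisfy $f_{k\ell}=\phi_{ki}'\circ f_{ij}\circ\phi_{j\ell}$ on $V_{\ell j}$ by cancellation.

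For the reverse direction I would reconstruct $M$ by gluing. Form the topological sum $\widetilde M:=\coprod_{i\in I}V_{ii}$ and, for $x$ in the $j$-th summand and $y$ in the $i$-th summand, declare $x\sim y$ iff $x\in V_{ji}$ and $\phi_{ij}(x)=y$. The relations $\phi_{ii}=\id$, $\phi_{ij}\circ\phi_{ji}=\phi_{ii}$ (so $\phi_{ji}=\phi_{ij}^{-1}$) and $\phi_{ik}=\phi_{ij}\circ\phi_{jk}$ on $V_{kji}$ give reflexivity, symmetry and transitivity of $\sim$, so $M:=\widetilde M/\!\sim$ is a topological space with quotient map $q$. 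I would then check: $q$ restricted to each summand $V_{ii}$ is injective (from $\phi_{ii}=\id$) and open (the $\sim$-saturation of an open set is a union of its images under the open maps $\phi_{ij}$), hence a homeomorphism onto an open set $U_i:=q(V_{ii})$; the $U_i$ form an open cover of $M$; and setting $\phi_i:=(q|_{V_{ii}})^{-1}$ one recovers exactly the given $V_{ij}$ and $\phi_{ij}$ as the induced overlaps and transition maps. This yields a primitive manifold $(M,\cT,(U_i,\phi_i,V_i))$ whose associated cocycle is the one we started from.

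Next I would check mutual inverseness. Starting from $(M,\cT,\mathcal{A})$, forming its cocycle and re‑gluing gives a primitive manifold canonically homeomorphic to $(M,\cT)$ via the map induced on $\widetilde M$ by the $\phi_i^{-1}$ — well defined by the cocycle relations, bijective and open because the $U_i$ cover $M$, and chart‑compatible; this becomes a genuine bijection of data once one restricts to maximal atlases. For morphisms, a compatible family $(f_{ij})$ is glued in the same spirit: on the $\ell$-th summand of $\widetilde M$ send, where defined, $x$ to the class of $f_{i\ell}(x)$ in the $i$-th summand of $\widetilde M'$; the relation $f_{k\ell}=\phi_{ki}'\circ f_{ij}\circ\phi_{j\ell}$, specialised at $j=\ell$ and at $i=k$, says precisely that these partial assignments agree when the target chart is changed and are constant on $\sim$-classes, so they descend to a map $M\to M'$, continuous because continuous on each open piece $U_\ell$ (the pieces where a given $f_{i\ell}$ applies cover $U_\ell$). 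Verifying that this construction and $f\mapsto(f_{ij})$ are inverse to one another is then a routine unwinding of definitions, completing the identification of morphisms.

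The only genuine obstacle I expect is bookkeeping of partial domains: the transition maps and the $f_{ij}$ are only partially defined, so every displayed equality is an equality of partial maps on a common open domain, and one must use the precise domain descriptions in (\ref{eqn:cocycle}) — in particular $V_{abc}=\phi_{ab}(V_{cb}\cap V_{ab})$ — to get transitivity of $\sim$ on the nose and to see that the quotient topology coincides with the atlas topology, i.e.\ that $q$ is open on summands. A related, purely expository point is how strict the word ``equivalent'' is meant to be: literally the two constructions are mutually inverse only up to a canonical chart‑compatible homeomorphism of primitive manifolds, becoming an honest bijection of data sets exactly when one works with maximal atlases (and, on morphisms, when one reads $f_{ij}:V_{jj}\to V_{ii}'$ as a continuous map defined on an open subset of $V_{jj}$, which is forced since morphisms of primitive manifolds are arbitrary continuous maps); this should be said explicitly.
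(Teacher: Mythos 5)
Your proposal is correct and follows essentially the same route as the paper: extract the cocycle $(V_{ij},\phi_{ij})$ from the atlas in one direction, and in the other glue the disjoint union $\coprod_i V_{ii}$ (the paper's $S=\{(i,x)\mid x\in V_{ii}\}$) along the equivalence relation $(i,x)\sim(j,y)\iff\phi_{ij}(y)=x$, recovering charts as the summand inclusions and morphisms by descending the compatible family $[(j,x)]\mapsto[(i,f_{ij}(x))]$. You supply the verifications (equivalence relation from the cocycle identities, openness of the quotient map on summands, agreement of quotient and atlas topologies, partial domains of the $f_{ij}$) that the paper explicitly omits as routine.
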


\begin{proof}
Given a primitive manifold $M$, the data $(V,\cT_V,(V_{ij},\phi_{ij})_{(i,j) \in I^2})$ are defined as above, and if 
$f:M\to M'$ is a morphism, we let $f_{ij} := \phi_i' \circ f \circ (\phi_j)^{-1}$. 

Conversely,  given $(V,\cT_V,(V_{ij},\phi_{ij})_{(i,j) \in I^2})$,
 define $M$  to be the quotient  $M:=S/\sim$, where 
$S:=\{(i,x)|x\in V_{ii} \}\subset I\times V$ with respect to the equivalence relation
 $(i,x)\sim(j,y)$ if and only if $(\phi_{ij})(y)=x$. 
 We then put 
$V_i:=V_{ii}$, $U_i:=\{[(i,x)],x\in V_i\}\subset M$ and $\phi_i:U_i\to V_i, [(i,x)]\mapsto x$.
The topology on $M$ is defined to be the topology generated by all
$(\phi_i)^{-1}(X)$ with $X$ open in $V_i$ and $i\in I$. 
Moreover, given a family $f_{ij}$ as in the theorem, the map
$$
f:M \to M', \quad [(j,x)] \mapsto [(i,f_{ij}(x))]
$$
is well-defined and continuous. 
All properties
are  now checked in a straightforward way; we omit the details (cf.\  \cite{Hu94}, Section 5.4.3).
\end{proof}

\begin{example}
If all $U_{ij}$ are empty for $i \not= j$, then $M$ is just the disjoint union of the sets $V_i:=V_{ii}$, and there are no
transition conditions. In particular, when $\vert I \vert = 1$, we see that
every open subset $U \subset V$ is a manifold, with a single chart. 
\end{example}

\begin{remark}
The preceding theorem is used to ``globalize'' local functorial constructions: if such a construction transforms local data, as
described in the theorem, into other such local data, then, again by the theorem, such a construction carries over to the manifold
level. For instance,
in topological differential calculus, this construction permits to define the {\em tangent bundle} $TM$ of a
manifold $M$, and, much more generally, the {\em Weil bundle} $FM$ of $M$ for any {\em Weil functor}
$F$ (see \cite{BeS, Be14}). In the present work, it is applied to the local construction $U \mapsto U^\sett{1}$.
\end{remark}

\end{document}